\newtheorem{theorem}{Theorem}[section]
\newtheorem{definition}[theorem]{Definition}
\newtheorem{cor}[theorem]{Corollary}
\newtheorem{prop}[theorem]{Proposition}
\newtheorem{lem}[theorem]{Lemma}
\newtheorem*{claim*}{Claim}
\newtheorem*{notation*}{Notation}
{\theoremstyle{definition}

}
\newcommand{\R}{\mathbb{R}}
\newcommand{\C}{\mathbb{C}}
\newcommand{\T}{\mathbb{T}}
\newcommand{\Z}{\mathbb{Z}}
\numberwithin{equation}{section}
{\theoremstyle{definition}
\newtheorem{remark}[theorem]{Remark}
}
\newcommand{\reg}{\hbox{reg}}
\newcommand{\sng}{\hbox{sng}}
\begin{document}
\title{Sharp decay rate for eigenfunctions of perturbed periodic Schrödinger operators}


\author[W.\ Liu]{Wencai Liu}
\email{liuwencai1226@gmail.com; wencail@tamu.edu}
\address{Department of Mathematics, Texas A\&M University, College Station TX, 77843, USA.}

\author[R.\ Matos]{Rodrigo Matos}
\email{rodrigo@mat.puc-rio.edu}
\address{Department of Mathematics, PUC-Rio, Rio de Janeiro RJ, 22451-900, Brazil.}

\author[J.\ N.\ Treuer]{John N. Treuer}
\email{jtreuer@ucsd.edu}
\address{Department of Mathematics, University of California San Diego, La Jolla, CA 92093, USA}

\maketitle

\begin{abstract}
This paper investigates uniqueness results for perturbed periodic Schr\"odinger operators on \(\mathbb{Z}^d\). Specifically, we consider operators of the form \(H = -\Delta + V + v\), where \(\Delta\) is the discrete Laplacian, \(V: \mathbb{Z}^d \rightarrow \mathbb{R}\) is a periodic potential, and \(v: \mathbb{Z}^d \rightarrow \mathbb{C}\) represents a decaying impurity. We establish quantitative conditions under which the equation \(-\Delta u + V u + v u = \lambda u\), for \(\lambda \in \mathbb{C}\), admits only the trivial solution \(u \equiv 0\). Key applications include the absence of embedded eigenvalues for operators with impurities decaying faster than any exponential function and the determination of sharp decay rates for eigenfunctions. Our findings extend previous works by providing precise decay conditions for impurities and analyzing different spectral regimes of \(\lambda\).  

\end{abstract}

\section{Introduction }

In this paper, we will concentrate on certain uniqueness results for finite difference equations arising from
locally perturbed periodic Schr\"odinger operators on $\Z^d$ of the type $H=-\Delta+V+v$, where $\Delta$ denotes the discrete Laplacian on $\mathbb{Z}^d$, $V:\mathbb{Z}^d\rightarrow \mathbb{R}$ is a periodic potential and $v:\mathbb{Z}^d\rightarrow \mathbb{C}$ is interpreted as an impurity which we will assume to be decaying ``sufficiently fast''.
Below we present several results which show that if $u\in \ell^2(\mathbb{Z}^d)$ solves
\begin{equation}\label{eq:eigenvalueeqH}-\Delta u+Vu+vu=\lambda u\,\,\,\,\text{in}\,\,\mathbb{Z}^d
\end{equation}
for some $\lambda\in \mathbb{C}$ and $v$ decays ``sufficiently fast'' then $u\equiv 0$. 
Our main contribution is that our results are quantitative. Namely, we provide specific conditions on how fast the impurity should decay for the uniqueness result to hold. One of our results, Theorem \ref{thm3:resolventdecay} below, is also shown to be sharp as $\lambda \to \infty$. The location of $\lambda$ will play a key role in the estimates of Theorems \ref{thm1:interiordecay}-\ref{thm3:resolventdecay}. Their statements differ according to the spectral regimes below: 
\begin{enumerate}[label=(\roman*)]
    \item \label{item:interiorcase} $\lambda$ belongs to the interior of a spectral band for $H_0:=-\Delta+V$.
    \item \label{item:endpointcase} $\lambda$ is an endpoint of a spectral band for $H_0$.
    \item \label{item:notess} $\lambda$ does not belong to the essential spectrum of $H$.
\end{enumerate}
Our setting is such that $\sigma_{\mathrm{ess}}(H)=\sigma(H_0)$ so the above regimes are mutually exclusive.  As it will become apparent throughout the paper, the proofs of the main results employ a combination of spectral tools along with complex analysis in several variables. For instance, in Theorems \ref{thm1:interiordecay} and \ref{thm2:endpointdecay}, we will make use of a Floquet transformation in order to rewrite the spectral problem and connect the decay of solutions to the analyticity of their Floquet transforms on a strip. We then apply recent developments in the discrete setting regarding the structure of the Fermi surface of $H_0$. In particular we utilize its irreducibility and related features, in order to check the assumptions of a result, originally due to Kuchment and Vainberg \cite{KuVa00}, regarding the cancellation of zeros of functions in several complex variables. 

Case \ref{item:interiorcase} is treated in Theorem \ref{thm1:interiordecay} where we prove that $H$ has no embedded eigenvalues if $v$ decays faster than any exponential (c.f. Definition \ref{def:betadecay}). This result already improves upon previous works. For instance, in \cite{IM14} the absence of embedded eigenvalues is shown without the presence of a periodic term, i.e.~if $V\equiv 0$, and assuming that $v$ is compactly supported. In \cite{LiuPreprint:Irreducibility}, the periodic term is considered but $v$ is required to decay super-exponentially with a precise estimate. We postpone further details to the discussion in Section \ref{subsec:settingsec} below Theorem \ref{thm1:interiordecay}. Absence of embedded eigenvalues for the discrete operator $H$ is mentioned as ``an interesting open question'' in \cite[Remark 6.6]{IsoKoro}, even without the presence of a periodic potential $V$. In this reference, trace formulas and inverse scattering are studied in the case where $V\equiv 0$ and $v$ is compactly supported. Below we highlight the main difficulties which are specific to the discrete setting. For now we stress that such uniqueness results are naturally expected to be more challenging in this context since equation \eqref{eq:eigenvalueeqH} carries less information than its continuous analogue.
Specifically, the lattice $\mathbb{Z}^d $ lacks the rotational symmetry of Euclidean space $\mathbb{R}^d.$

Case \ref{item:endpointcase} is the subject of Theorem \ref{thm2:endpointdecay}, where we show that if $\lambda$ is the endpoint of a spectral band for $H_0$ then $Hu=\lambda u$ does not admit exponentially decaying solutions if $v$ decays faster than any exponential. It is worth comparing and contrasting this result to the phenomenon of Anderson Localization (see \cite{A-W-B, Kirschsurv,Stolz-surv} and references therein) where exponentially decaying eigenfunctions occur at spectral edges in the presence of random, independent and identically distributed impurities.\par
In case \ref{item:notess},
 the distance between a complex surface $D(\lambda)$ and $\mathbb{R}^d$ plays an important role, see Remark \ref{rem:surfacedist} and Equation \eqref{eq:dispersion} for the definition of $D(\lambda)$. This surface is closely related to the Fermi surface at $\lambda$, although it is defined through different variables.
 In Theorem \ref{thm3:resolventdecay} we obtain a rate $r=r(\lambda)$ for which it is impossible to construct a solution to the finite difference equation \eqref{eq:eigenvalueeqH} that decays exponentially at a faster rate than $r$. In Theorem  \ref{thm3:resolventdecay} \ref{prop:optimal} we show with an example that this rate is, asymptotically speaking as $\lambda \to \infty$; it is the best one may find within the class of impurities $v$ considered here.
\subsection{ Brief review of the literature}
We now provide some context for this work, focusing on previous contributions which are technically close to the present paper. We start with the pioneering works \cite{KuVa00} and \cite{ShaVainb} which, to the best of our knowledge, set the stage for the applications of complex analysis in several variables to uniqueness questions such as the ones studied here.\par
In  \cite{KuVa00},
absence of embedded eigenvalues was shown  for continuous Schr\"odinger operators $-\Delta+V+v$, in dimensions two and three, with periodic potential $V$ and a ``sufficiently fast'' decaying perturbation $v$, provided
the corresponding Fermi surface is irreducible modulo natural symmetries. This work also introduced many ideas which are key to the present paper. Thus, we provide further comments below Theorem \ref{thm1:interiordecay} and also in Section \ref{Section: Kuchvainb}, which states the result of \cite[Lemma 17]{KuVa00} along with a local version of it which is needed for Theorem \ref{thm3:resolventdecay} below.
In \cite{ShaVainb} the authors studied the question of uniqueness of solutions of the equation
\begin{equation}\label{eq:eigenfshavai}
(-\Delta+q-\lambda)\psi=f\,\,\,\text{in}\,\,\,\mathbb{Z}^d
\end{equation} when both $q$ and $f$ have bounded support. In particular, they determined appropriate classes $W_{\pm}$ such that \eqref{eq:eigenfshavai} admits a unique solution $\psi_{\pm}$ within each of the classes  $W_{\pm}$ whenever $\lambda\in \sigma(-\Delta)\setminus S_0$, where $S_0$ is a finite set of `exceptional' points. Moreover, these solutions are obtained as limits of resolvent values $\mathcal{R}_{z}=(-\Delta-q-z)^{-1}f$ as $z\to \lambda\pm i0$, \cite[Theorem 10]{ShaVainb}.\par
In \cite{IM14}, the existence of `Rellich-type' solutions of \eqref{eq:eigenfshavai} was studied when $q\equiv  0$  and $f$ has compact support in $\mathbb{Z}^d$. Namely, let
$B_{R}=\{n\in \mathbb{Z}^d:\,\,\,|n|\leq R\}$, $B^c_{R}=\mathbb{Z}^d\setminus B_R$ and assume that $\psi$ satisfies, for some $R_0>0$ and $\lambda\in \sigma(-\Delta)$,
\begin{equation}
    (-\Delta-\lambda)\psi=0\,\,\,\text{in}\,\,\,B^c_{R_0}.
\end{equation}
Moreover suppose that $\psi$ decays according to
\begin{equation}\label{eq:decayRellich}
\lim_{R\to \infty} \sum_{n\in B_{R}\setminus B_{R_0}}\abs{\psi(n)}^2=0.
\end{equation}
It was proved in 
\cite[Theorem 1.1]{IM14} that there exists $R_1>R_0$ such that $\psi\equiv 0$ in $B^c_{R_1}$.
As a consequence, the authors derived a result on the absence of eigenvalues embedded into the essential spectrum (except for, possibly, its endpoints) for operators of the type $-\Delta+q$ where $q$ is compactly supported.\par
In \cite{AIM16}, a number of spectral questions were addressed for $-\Delta+v_0$, in various lattices, under the assumption that $v_0$ is finitely supported. For instance, the unique continuation property, absence of embedded eigenvalues and limiting absorption principle for the resolvent were treated there.
 In \cite{IMinv} the authors investigated the inverse scattering problem for the operator $-\Delta+v_0$ on $\mathbb{Z}^d$
where again $v_0$ is a finitely supported potential. There, it was shown that $v_0$ can be  uniquely reconstructed from a scattering matrix at a fixed energy within the spectrum of $-\Delta+v$ and away from an exceptional set. Inverse scattering for more general graphs was studied in \cite{KIM18}. \par
 More recently, absence of embedded eigenvalues for operators $-\Delta+V+v$, where $V$ is periodic and $v$ decays \emph{super}-exponentially was shown building on the proof of the irreducibility conjecture of Fermi varieties \cite{LiuPreprint:Irreducibility}.
In particular Theorem \ref{thm1:interiordecay} below improves upon this result. It is also worth mentioning that the irreducibility of Fermi varieties has been previously linked to the occurrence of embedded eigenvalues. Indeed, these connections were examined in earlier work by Kuchment-Vainberg \cite{KuVa00,kv06cmp} and Shipman\cite{shi1}.\par
The remainder of this paper is organized as follows: In Section \ref{subsec:settingsec} we state the main results of this work. In Section \ref{Section: Kuchvainb} we state a key result of several complex variables which is needed for the proofs of the main theorems. In Section \ref{Sec:Floquet} we present a few basic results in Floquet theory which shall be used later in the main proofs given in Sections \ref{Sec:proofs} and \ref{Sec:proofscontinued}. Various results of several complex variables are collected in the Appendix \ref{sec:proofovanishing}, which might be of independent interest.

\subsection{Setting and main results}
\label{subsec:settingsec}

	Given $q_i\in \Z_+$, $i=1,2,\ldots,d$,
	let $\Gamma=q_1\Z\oplus q_2 \Z\oplus\cdots\oplus q_d\Z$.
	We say that a function $V: \Z^d\to \R$ is  $\Gamma$-periodic (or just periodic)  if 
	for any $\gamma\in \Gamma$ and $n\in \mathbb{Z}^d $ the equality  $$V(n+\gamma)=V(n)$$ holds.

	Let  $\Delta$ be the adjacency operator on $\ell^2(\Z^d)$, namely
	\begin{equation*}
	(\Delta u)(n)=\sum_{|n^\prime-n|=1}u(n^\prime),
	\end{equation*}
	where $n=(n_1,n_2,\ldots,n_d)\in\Z^d$, $n^\prime=(n_1^\prime,n_2^\prime,\ldots,n_d^\prime)\in\Z^d$ and 
	\begin{equation*}
	|n^\prime-n|=\sum_{i=1}^d |n^\prime_i - n_i|.
	\end{equation*}
	Consider the  discrete Schr\"{o}dinger operator on $\ell^2({\Z}^d)$ given by
	\begin{equation} \label{h0}
	H_0=-\Delta +V ,
	\end{equation}
	where $V:\mathbb{Z}^d\rightarrow \mathbb{R}$ is $\Gamma$-periodic.
	
	In this paper, we always assume that $d\geq 1$, that the greatest common factor of $q_1,q_2,\ldots, q_d$ is 1 and that $V$ is $\Gamma$-periodic. Hence
	$H_0$ given by \eqref{h0} is a discrete periodic Schr\"{o}dinger operator.

 	Let $Q=q_1q_2\cdots q_d$.
	The spectrum of $H_0=-\Delta+V$, denoted by $\sigma (H_0)$, is the union of the spectral bands $[a_m,b_m]$, $m=1,2,\ldots, Q$, where each of them is the range of a band function $\lambda_m (k)$, $k\in[0,1)^d$, i.e:
	\begin{equation}\label{gband}
	\sigma (H_0)={\bigcup }_{m= 1}^{Q}[a_m,b_m].
	\end{equation}
	The main objects of our study are the perturbed periodic operators of the form
	\begin{equation}\label{Hop}
	H=H_0+v=-\Delta +V +v,   
	\end{equation}
where $v:\mathbb{Z}^d\rightarrow \mathbb{C}$ is a decaying function which is not identically zero. By Weyl's criterion, if $ \limsup_{|n|\to \infty} |v(n)|=0$ then 
	\begin{equation*}
	\sigma_{\rm ess}(H)=\sigma(H_0)={\bigcup }_{m = 1}^{Q}[a_m,b_m],
	\end{equation*}
	where $\sigma_{\rm ess}(H)$ is the essential spectrum of $H$. 
 Among other questions, we will be interested in the set of eigenvalues of $H$, denoted by $\sigma_p(H)$.

 \begin{definition}
 We say that the perturbed operator $H$ given by \eqref{Hop} does not have embedded eigenvalues if $\,\bigcup_{m=1}^{Q}(a_m,b_m)\cap \sigma_p(H)=\emptyset$.
 \end{definition}

 \begin{definition}\label{def:betadecay}
 Let $\beta>0$ be given.
     We say that $f:\mathbb{Z}^d\rightarrow \mathbb{C}$ has $\beta$-exponential decay
     if \begin{equation}\label{eq:betadecay}
     \limsup_{|n|\to \infty}\frac{\ln|f(n)|}{|n|}<-\beta.
     \end{equation}
 \end{definition}
 Our first result is the following

	\begin{theorem}\label{thm1:interiordecay}
		Let $d\geq 1$, $H$ be given by \eqref{Hop} and
		assume that $v$ has $\beta$-exponential decay for all $\beta>0$.
		Then $H$ does not have embedded eigenvalues.
	\end{theorem}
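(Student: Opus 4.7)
The plan is to argue by contradiction. Suppose $u\in\ell^2(\Z^d)$ is a non-trivial solution of $Hu=\lambda u$ with $\lambda$ in the interior of some band $(a_m,b_m)$ of $\sigma(H_0)$. The starting observation is that $u$ is bounded (being in $\ell^2$), while $v$ has $\beta$-exponential decay for every $\beta>0$, so the product $f:=vu$ inherits super-exponential decay: $f$ has $\beta$-exponential decay for every $\beta>0$. The eigenvalue equation then reads $(H_0-\lambda)u=-f$ in $\Z^d$, a non-homogeneous equation with a right-hand side that decays faster than any exponential.

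Next I would pass to the Floquet side, as in Section \ref{Sec:Floquet}. Working with multiplicative variables $z=(z_1,\ldots,z_d)\in(\C^*)^d$, the periodic operator $H_0-\lambda$ becomes a $Q\times Q$ Laurent polynomial matrix $H_0(z)-\lambda I$, and the equation takes the form
\[
(H_0(z)-\lambda)\tilde u(z)=-\tilde f(z),\qquad z\in\T^d=\{|z_1|=\cdots=|z_d|=1\}.
\]
The super-exponential decay of $f$ promotes $\tilde f$ to an entire function on $(\C^*)^d$ of minimal exponential type, while $\tilde u$ is, a priori, only a tuple of $L^2$ functions on $\T^d$. Multiplication by the classical adjugate yields
\[
D(z,\lambda)\,\tilde u(z)=\Phi(z):=-\operatorname{adj}\bigl(H_0(z)-\lambda\bigr)\tilde f(z),
\]
where $D(z,\lambda)=\det(H_0(z)-\lambda)$ cuts out the Fermi variety $F_\lambda\subset(\C^*)^d$ and $\Phi$ is entire of minimal exponential type.

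The heart of the argument is the application of the Kuchment--Vainberg cancellation-of-zeros lemma from Section \ref{Section: Kuchvainb}. First one verifies that $\Phi$ vanishes on $F_\lambda$: on the torus this follows from $\tilde u\in L^2$, which excludes singular contributions on the measure-zero set $F_\lambda\cap\T^d$; this vanishing then propagates from $F_\lambda\cap\T^d$ to the smooth part of the complex variety $F_\lambda$ by analytic continuation, and thence to all of $F_\lambda$ by continuity. Second, one invokes the recent irreducibility result of \cite{LiuPreprint:Irreducibility} for the Fermi variety of periodic discrete Schr\"odinger operators; this is exactly what licenses the Kuchment--Vainberg lemma and produces an entire function $g$ on $(\C^*)^d$ with $\Phi=Dg$. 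The quotient $g$ still has minimal exponential type and agrees on $\T^d$ with $\tilde u$, so $\tilde u$ itself extends to an entire function on $(\C^*)^d$; equivalently, $u$ has $\beta$-exponential decay for every $\beta>0$. Iterating the cancellation procedure, or equivalently exploiting the rigid convolution relation $\tilde f=\tilde v\ast\tilde u$ between two entire functions of minimal type together with the irreducible algebraic constraint on $F_\lambda$, drives $\tilde u$ identically to zero, contradicting the choice of $u$.

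The principal obstacle is precisely this last step: upgrading "$u$ has $\beta$-exponential decay for every $\beta>0$" to $u\equiv 0$. Growth alone is insufficient, since entire functions on $(\C^*)^d$ of minimal exponential type that are bounded on $\T^d$ are plentiful. The contradiction must come from the algebraic rigidity imposed by the irreducible Fermi variety, meaning that the successive divisions by $D$ in the cancellation scheme remain valid all the way down. Checking this---that $\Phi$, and subsequently its successive quotients, genuinely vanish on \emph{all} of $F_\lambda$ rather than merely on $F_\lambda\cap\T^d$, and that exponential types are tracked consistently through the Floquet transform---is the delicate technical core, and it is where the irreducibility theorem of \cite{LiuPreprint:Irreducibility} and the several-complex-variables machinery collected in Section \ref{Section: Kuchvainb} and the appendix play their essential role.
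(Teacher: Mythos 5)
Your first three steps faithfully reproduce the paper's strategy: pass to the Floquet side, express $\hat u$ as $\tilde B(x,\lambda)\hat\psi(x,\cdot)/\tilde P(x,\lambda)$ with $\psi=-vu$, verify via the irreducibility of the Fermi variety that hypothesis \ref{condition1} of Theorem~\ref{Kuch-Vainb theorem} holds, and conclude that each component $x\mapsto\hat u(x,l)$ is entire. Translating back through Lemma~\ref{Lem:relationship analiticitydecay}, you correctly obtain that $u$ itself has $\beta$-exponential decay for every $\beta>0$.

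The gap is in your final step, and you have essentially flagged it yourself. You propose to close the argument by ``iterating the cancellation procedure'' or by exploiting the ``rigid convolution relation $\tilde f=\tilde v\ast\tilde u$'' together with the irreducible algebraic constraint on $F_\lambda$, but this is hand-waving: once $\hat u$ is entire, re-running the Kuchment--Vainberg division gives no new information, and entire functions with super-exponentially decaying Fourier coefficients are plentiful, as you yourself note, so the analytic machinery alone cannot force $\hat u\equiv 0$. The paper instead invokes a different and entirely independent ingredient at this point: a discrete weak unique continuation theorem (the references given are \cite[p.~49]{bk13} and \cite{LyMa18}), which asserts that a solution of $-\Delta u+Wu=\lambda u$ on $\mathbb Z^d$ with bounded potential $W$ that decays faster than every exponential must vanish identically. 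Heuristically this is because the finite difference equation ties values of $u$ at neighboring lattice sites together, making super-exponential decay incompatible with a nontrivial solution. Your proof needs this theorem (or a proof of it) to be complete; the ``algebraic rigidity'' mechanism you sketch is not the mechanism that actually closes the argument, and as written it would fail.
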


 The continuous version of Theorem \ref{thm1:interiordecay}, where $\Delta$ is replaced by the Laplacian on $\mathbb{R}^d$, was proven in \cite[Theorem 15]{KuVa00}, in dimension $d\leq 3$, under the following assumptions:
 \begin{enumerate}[label=(\alph*)]
      \item\label{intersecKV} Any irreducible component of the Fermi variety of $H_0$ intersects the real space $\mathbb{R}^d$ by a subset of dimension $d-1$.

\item \label{decayKV} The inequality \begin{equation} \abs{v(x)}\leq e^{-C\abs{x}^\gamma}\,\,\,\text{for some}\,\,\,\gamma>\frac{4}{3}
 \end{equation} 
 holds almost everywhere in $\mathbb{R}^d$.
 \end{enumerate}
 
  Condition \ref{decayKV} arises due to an application of the unique continuation result \cite{Bourgain-Kenig}. In dimension $d=2$, due to a result of Logunov-Malinnikova-Nadirashvili-Nazarov \cite{Log-Mal-Nad-Naz}, following the proof of \cite[Theorem 15]{KuVa00} one obtains the same result under the weaker assumption
 \begin{equation} \abs{v(x)}\leq e^{-C\abs{x}^\gamma}\,\,\,\text{for some}\,\,\,\gamma>1.
 \end{equation}
 On the other hand, in the discrete setting, the conclusion of Theorem \ref{thm1:interiordecay} was first established in \cite{LiuPreprint:Irreducibility} under the assumption that for all $n\in \mathbb{Z}^d$ \begin{equation} 
 \abs{v(n)}\leq e^{-C\abs{n}^\gamma}\,\,\,\text{for some}\,\,\,\gamma>1.
 \end{equation}
 For further discussions on the literature we refer the reader to Section 4 in \cite{liu2021topics}.
 Thus, the main technical contribution of Theorem \ref{thm1:interiordecay} is to replace the condition of super-exponential decay on $v$ by $\beta$-exponential decay for all $\beta>0$. As consequences, we obtain
 \begin{cor}\label{cor2:interiordecayLaplacian}
		Let $d\geq 1$ and assume that $v$ has $\beta$-exponential decay for all $\beta>0$. Then $\sigma_{\rm ess}(-\Delta+v)=[-2d,2d]$ and $\sigma_{\rm p}(-\Delta+v)\cap(-2d,2d)=\emptyset$.
	\end{cor}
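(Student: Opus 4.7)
The plan is to deduce the corollary directly from Theorem \ref{thm1:interiordecay} by specializing to $V\equiv 0$. First I would verify that the zero potential fits into the periodic framework of Section \ref{subsec:settingsec}: choosing $q_1=q_2=\cdots=q_d=1$ yields $\Gamma=\mathbb{Z}^d$, the greatest common divisor hypothesis on $q_1,\dots,q_d$ is trivially satisfied, and $Q=q_1q_2\cdots q_d=1$, so the spectrum of $H_0=-\Delta$ consists of a single band $[a_1,b_1]$.

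Next I would identify $[a_1,b_1]=[-2d,2d]$ via the standard Fourier computation: on $\ell^2(\mathbb{Z}^d)$ the adjacency operator $-\Delta$ is unitarily equivalent to multiplication by $-2\sum_{j=1}^{d}\cos(2\pi k_j)$ on $L^2([0,1)^d)$, whose range is exactly $[-2d,2d]$. For the essential spectrum identity, observe that the hypothesis that $v$ has $\beta$-exponential decay for every $\beta>0$ implies $|v(n)|\to 0$ as $|n|\to\infty$, so $v$ acts as a compact multiplication operator on $\ell^2(\mathbb{Z}^d)$. The Weyl-criterion argument already recorded in the paragraph containing equation \eqref{Hop} then gives $\sigma_{\rm ess}(-\Delta+v)=\sigma(-\Delta)=[-2d,2d]$.

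For the absence of embedded eigenvalues, Theorem \ref{thm1:interiordecay} applies verbatim under the choice $V\equiv 0$: the decay hypothesis on $v$ is identical, and the union $\bigcup_{m=1}^{Q}(a_m,b_m)$ collapses to the single open interval $(-2d,2d)$. The conclusion $\sigma_p(-\Delta+v)\cap(-2d,2d)=\emptyset$ is then immediate from the ``no embedded eigenvalues'' statement of Theorem \ref{thm1:interiordecay}.

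Since all the ingredients are either standard spectral-theoretic facts or were already set up before the statement of Theorem \ref{thm1:interiordecay}, there is no genuine obstacle; the corollary is essentially a relabeling of the main theorem in the special case $V\equiv 0$. The only point worth being a little careful about is confirming that the trivial period lattice $\mathbb{Z}^d$ satisfies the gcd condition, which is automatic with $q_i=1$.
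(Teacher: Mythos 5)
Your proposal is correct and matches the paper's intended derivation: the corollary is presented there simply as a consequence of Theorem \ref{thm1:interiordecay}, obtained by taking $V\equiv 0$ (with $q_1=\cdots=q_d=1$, so $Q=1$ and a single band $[-2d,2d]$), with the essential spectrum identity supplied by the Weyl-criterion remark following \eqref{Hop}. Your extra care in checking the gcd condition and computing the single band via the Fourier symbol $-2\sum_j\cos(2\pi k_j)$ is exactly the right bookkeeping.
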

\begin{remark}
    Under a stronger assumption that v has compact support, it was shown in \cite{IM14} that $\sigma_{\rm p}(-\Delta+v)\cap(-2d,2d)=\emptyset$ 
\end{remark} 
 \begin{remark}
 We stress that for our results the potential $v$ may be complex-valued whereas $V$ must be real-valued.
 \end{remark}

We now proceed to state our endpoint result.
\begin{theorem}\label{thm2:endpointdecay}
		Let $d\geq 1$ and $H$ be given by \eqref{Hop}.
		Assume that $v$ has $\beta$-exponential decay for all $\beta>0$,
  that 
  $Hu=\lambda u$ for some $\lambda \in \bigcup_{m=1}^{Q}[a_m,b_m]$ and that $u$ has $\beta_0$-exponential decay for \emph{some} $\beta_0>0$. Then $u\equiv0$.
	\end{theorem}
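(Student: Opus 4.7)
The plan is to follow the Floquet/complex-analysis strategy used for Theorem~\ref{thm1:interiordecay}, adapted to the endpoint regime where $\lambda \in \{a_m, b_m\}$. First, passing to the multiplicative Floquet variables $z_j = e^{2\pi i k_j}$, the assumption that $u$ has $\beta_0$-exponential decay makes its Floquet transform $\hat u(z)$ analytic on a complex annular neighborhood of the real torus $\mathbb{T}^d$, while the super-exponential decay of $v$ (combined with the exponential decay of $u$) forces $\widehat{vu}$ to be entire on $(\mathbb{C}^*)^d$. The eigenvalue equation $(H_0 - \lambda)u = -vu$ becomes, at the level of Floquet transforms,
\begin{equation*}
(\hat H_0(z) - \lambda I)\, \hat u(z) = -\widehat{vu}(z),
\end{equation*}
and after multiplying by the adjugate matrix one obtains an identity of the form $D(z,\lambda)\, \hat u(z) = G(z)$, where $D(z,\lambda) = \det(\hat H_0(z)-\lambda I)$ is the dispersion polynomial whose zero set is (a cover of) the Fermi variety at $\lambda$, and $G$ is a vector-valued entire function on $(\mathbb{C}^*)^d$ built from $\widehat{vu}$.

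Second, I would invoke the Kuchment--Vainberg cancellation-of-zeros lemma recalled in Section~\ref{Section: Kuchvainb}, together with the irreducibility (modulo natural symmetries) of the endpoint Fermi variety, to show that $G$ vanishes on each irreducible component of $\{D(\,\cdot\,,\lambda)=0\}$ to the order required by the lemma. This removes the apparent singularities of the quotient $G/D$ and extends $\hat u$ to an entire function on $(\mathbb{C}^*)^d$. By the Paley--Wiener type correspondence underlying the Floquet picture, an entire extension of $\hat u$ with controlled growth translates back to $u$ having $\beta$-exponential decay for \emph{every} $\beta > 0$. A bootstrap step, re-feeding this improved decay into the same cancellation argument (or directly comparing growth of $\hat u$ against the polynomial growth of $D$ on large polyannuli $|z_j|=R$), then forces $\hat u \equiv 0$ and hence $u \equiv 0$.

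The main obstacle, and the reason the theorem requires the a priori exponential-decay hypothesis on $u$, is the degenerate behavior of the dispersion polynomial at spectral edges. When $\lambda \in \{a_m, b_m\}$, the band function $\lambda_m$ attains an extremum on $\mathbb{T}^d$, so the real slice of the Fermi variety consists of critical points of $\lambda_m$; this raises the order of vanishing of $D(\,\cdot\,,\lambda)$ along parts of its real locus and potentially complicates the irreducible decomposition, which is precisely the setting in which the Kuchment--Vainberg lemma is most delicate to apply. Verifying that $G$ vanishes on every such (possibly higher-multiplicity) component to the order required, using only the entire nature of $\widehat{vu}$ and the analyticity of $\hat u$ on the fixed strip of width controlled by $\beta_0$, is the technical crux. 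Without the assumed exponential decay of $u$ one would only have $\hat u$ as a distribution on $\mathbb{T}^d$, and the analytic continuation step that makes order-of-vanishing statements meaningful would be unavailable; with it, once the endpoint irreducibility input is in place, the remaining steps parallel those of Theorem~\ref{thm1:interiordecay}.
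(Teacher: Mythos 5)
Your overall architecture (Floquet transform, the identity $\hat u = \tilde B\hat\psi/\tilde P$, the Kuchment--Vainberg lemma, extension of $\hat u$ to an entire function, and a final contradiction) matches the paper's, and you correctly identify that the a priori $\beta_0$-exponential decay of $u$ is what puts $\hat u$ on a strip $\mathcal{S}_\varepsilon$ and that the degenerate real locus at a spectral edge is the danger. But your plan for getting past that danger is where you diverge from the paper and where the gap lies. You propose to ``show that $G$ vanishes on each irreducible component to the order required,'' using irreducibility of the endpoint Fermi variety. That is effectively running the condition~\ref{condition1} machinery of Theorem~\ref{Kuch-Vainb theorem}, which requires the real slice $Z_j\cap\mathbb{R}^d$ of every component to have Hausdorff dimension $d-1$ (so that the $L^2_{\mathrm{loc}}$ argument can force the numerator to vanish along the real locus). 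The paper's proof explicitly begins by saying this is \emph{not} available at a band edge: when $\lambda\in\{a_m,b_m\}$ the real Fermi locus can collapse to a lower-dimensional critical set, and no irreducibility theorem rescues the dimension count. Proposing to ``verify that $G$ vanishes to the required order on each (possibly higher-multiplicity) component'' does not say how you would establish this vanishing once the $L^2_{\mathrm{loc}}$/dimension route is closed off, and in fact your second paragraph is close to circular: you want to invoke the K--V lemma to produce the vanishing that the K--V lemma's hypothesis needs.

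The paper avoids this entirely by switching to condition~\ref{condition2} of Theorem~\ref{Kuch-Vainb theorem}. Since $u$ has $\beta_0$-exponential decay, Lemma~\ref{Lem:relationship analiticitydecay} gives that each $\hat u(\cdot,l)$ is already holomorphic on a strip $\mathcal{S}_\varepsilon$ for some $\varepsilon>\mathrm{dist}(\mathbb{R}^d,D(\lambda))=0$. One then takes $\Omega=\mathbb{C}^d$, $\Omega'=\mathcal{S}_\varepsilon$, and applies condition~\ref{condition2}: because $\hat u=g/h$ is already holomorphic on an open domain meeting every irreducible component of $\{\tilde P(\cdot,\lambda)=0\}$, it extends to an entire function. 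No computation of vanishing orders, and no dimension condition on the real slice of the Fermi variety, is needed — the a priori decay hypothesis does all the work. This is exactly why the endpoint theorem requires the extra hypothesis on $u$, a point you intuit but do not connect to the correct branch of the lemma. Finally, your closing step is also off: after $\hat u$ is shown to be entire, the paper concludes $u$ has $\beta$-exponential decay for all $\beta>0$ and then invokes the weak unique continuation principle of \cite[P.49]{bk13} or \cite{LyMa18} to get $u\equiv 0$. Your proposed ``bootstrap'' or Liouville-type growth comparison on polyannuli is not the mechanism used and is not obviously sound: entirety of $\hat u$ alone does not force $\hat u\equiv 0$ without some additional rigidity, which in the paper is supplied by the unique continuation result on $\mathbb{Z}^d$.
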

 An immediate consequence of theorem \ref{thm2:endpointdecay} is the following uniqueness result:
 \begin{cor} Let $d\geq 1$. Assume that $v$ has $\beta$-exponential decay for all $\beta>0$,
  that $u_1,u_2:\mathbb{Z}^d\rightarrow \mathbb{C}$ solve
  $$
  (-\Delta+V+v)u=\lambda u$$
 and that $u_1-u_2$ has $\beta_0$-exponential decay for \emph{some} $\beta_0>0$.
 Then $u_1\equiv u_2$.
 \end{cor}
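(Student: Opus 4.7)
The plan is to reduce the corollary directly to Theorem \ref{thm2:endpointdecay} by linearity. Set $w := u_1 - u_2$. Since $H = -\Delta + V + v$ is a linear operator and both $u_1$ and $u_2$ satisfy $Hu_j = \lambda u_j$ with the same $\lambda$, subtracting gives
\begin{equation*}
(-\Delta + V + v)\,w = \lambda\,w \quad \text{in } \mathbb{Z}^d.
\end{equation*}
The hypothesis on $v$ is identical to that of Theorem \ref{thm2:endpointdecay}, and the assumption that $u_1 - u_2$ has $\beta_0$-exponential decay for some $\beta_0 > 0$ is exactly the decay hypothesis imposed on the solution in that theorem.

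Assuming $\lambda \in \bigcup_{m=1}^{Q}[a_m,b_m]$ (the spectral regime covered by Theorem \ref{thm2:endpointdecay}, which is presumably the implicit scope of the corollary, since this includes all points of $\sigma_{\mathrm{ess}}(H)$), Theorem \ref{thm2:endpointdecay} applied to $w$ forces $w \equiv 0$, that is, $u_1 \equiv u_2$. This is the entire argument.

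There is essentially no obstacle: the deduction is a one-line application of linearity. The only point worth flagging is what happens if $\lambda \notin \bigcup_{m=1}^{Q}[a_m,b_m] = \sigma_{\mathrm{ess}}(H)$. In that case $w$ lies in $\ell^2(\mathbb{Z}^d)$ (by its $\beta_0$-exponential decay) and satisfies $(H-\lambda)w = 0$, so $w$ would be an eigenfunction of $H$ outside the essential spectrum; the rate of decay of such eigenfunctions is controlled by Theorem \ref{thm3:resolventdecay} and one could argue, if $v$ decays faster than any exponential, that no non-trivial such eigenfunction can decay only at the slow rate $\beta_0$ while satisfying the sharp bounds; alternatively, one simply interprets the corollary within the scope of Theorem \ref{thm2:endpointdecay} as written. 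In either reading, the mechanism is the same: linearity turns a uniqueness-up-to-zero statement for the eigenvalue equation into a uniqueness statement for pairs of solutions whose difference decays exponentially.
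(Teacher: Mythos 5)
Your argument is exactly the paper's intended one: the corollary is stated as an ``immediate consequence'' of Theorem \ref{thm2:endpointdecay}, and the one-line reduction via $w = u_1 - u_2$ and linearity is all that is meant. Your remark about the implicit scope $\lambda \in \bigcup_{m=1}^{Q}[a_m,b_m]$ is also well taken: without it the statement would fail whenever $\lambda$ is a discrete eigenvalue of $H$ below/above the bands, since one could take $u_1$ to be the (exponentially decaying, by Combes--Thomas) eigenfunction and $u_2 \equiv 0$; so the corollary must indeed be read within the spectral regime of Theorem \ref{thm2:endpointdecay}.
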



 Finally, our third theorem below covers the case of solutions of the finite difference equation corresponding to energies away from $\sigma_{\mathrm{ess}}(H)$.

 \begin{theorem}\label{thm3:resolventdecay}
 \begin{enumerate} [label=(\Roman*)]
     \item \label{Ithm3:resolventdecay}
		Let $d\geq 1$, $H$ be given by \eqref{Hop} and 
		assume that $v$ has $\beta$-exponential decay for all $\beta>0$. Take
 $\lambda \notin\bigcup_{m=1}^{Q}[a_m,b_m]$. There exists $r=r(\lambda)$ such that 
		if  $Hu=\lambda u$  and
  $u$ has $\beta_0$-exponential decay for some $\beta_0>r(\lambda)$
  then $u\equiv0$.
  \item \label{asympthmii} Moreover, $r(\lambda)$ has the following asymptotic behavior \begin{equation}\label{eq:rasymp}\lim_{\abs{\lambda} \to \infty}\frac{r(\lambda)}{\log \abs{\lambda}}=1.
  \end{equation}

  \item \label{prop:optimal}
There exists a set $\mathcal{E}\subset \mathbb{R}$ of Lebesgue measure zero with the following property: \newline For all $\lambda \in \mathbb{C}\setminus(\mathcal{E}\cup \sigma(H_0))$ there exists $v=v_{\lambda}:\mathbb{Z}^d\rightarrow \mathbb{C}$  and $u=u_{\lambda}:\mathbb{Z}^d\rightarrow \mathbb{C}$ such that \eqref{eq:eigenvalueeqH} holds. Moreover $v$ has $\beta$-exponential decay for all $\beta>0$ and $u$ has $\beta_0$-exponential decay for all $\beta_0<\ln |\lambda |-\ln(2d)$.

  \end{enumerate}
	\end{theorem}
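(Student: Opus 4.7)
The plan is to tie the quantity $r(\lambda)$ to the geometry of the complex dispersion surface $D(\lambda)$ and to run a Floquet/Kuchment--Vainberg cancellation argument parallel to, but more quantitative than, the one underlying Theorems~\ref{thm1:interiordecay}--\ref{thm2:endpointdecay}. Since $\lambda\notin\sigma(H_0)$, the surface $D(\lambda)$ is separated from $\mathbb{R}^d$ by a positive distance (Remark~\ref{rem:surfacedist}); I would then define
\begin{equation*}
r(\lambda):=2\pi\,\mathrm{dist}\bigl(D(\lambda),\mathbb{R}^d\bigr),
\end{equation*}
so that the Floquet matrix $\widehat{H}_0(k)-\lambda\,\mathrm{Id}$ is invertible on the open strip $S_{\beta}:=\{k\in\mathbb{C}^d:|\operatorname{Im} k|<\beta/(2\pi)\}$ for every $\beta<r(\lambda)$. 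Rewriting the eigenvalue equation as $(H_0-\lambda)u=-vu$ and passing to Floquet variables as in Section~\ref{Sec:Floquet}, I would use that $\beta_0$-exponential decay of $u$ makes $\hat u$ holomorphic on $S_{\beta_0}$ while the stronger decay of $v$ makes $g:=\widehat{vu}$ entire.

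For Part~\ref{Ithm3:resolventdecay}, on $S_{\beta_0}\setminus D(\lambda)$ the identity $\hat u(k)=-(\widehat{H}_0(k)-\lambda\,\mathrm{Id})^{-1}g(k)$ holds; the holomorphy of $\hat u$ throughout $S_{\beta_0}$ combined with the hypothesis $\beta_0>r(\lambda)$, which guarantees that $S_{\beta_0}$ strictly contains an analytic piece of $D(\lambda)$, forces the entire function $g$ to vanish on $D(\lambda)\cap S_{\beta_0}$ in order to cancel the singularities of the resolvent. Invoking the local version of the Kuchment--Vainberg vanishing principle from Section~\ref{Section: Kuchvainb} together with the irreducibility of the Bloch variety of $H_0$, I would upgrade this local vanishing to the global identity $g\equiv 0$, equivalently $vu\equiv 0$ on $\mathbb{Z}^d$; a unique-continuation argument identical in spirit to those used in the previous theorems then propagates the vanishing to $u\equiv 0$. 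The main obstacle is verifying that a piece of $D(\lambda)\cap S_{\beta_0}$ of positive real dimension meets the hypotheses of the local Kuchment--Vainberg lemma, which is exactly why the quantitative surface-distance definition of $r(\lambda)$ is the correct one.

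For Part~\ref{asympthmii}, the asymptotics of $r(\lambda)$ reduce to analyzing $D(\lambda)$ for large $|\lambda|$. Writing $k=\xi+i\eta$ and decomposing $\widehat{H}_0(\xi+i\eta)$ into the free dispersion $-2\sum_{j=1}^d\cos\bigl(2\pi(\xi_j+i\eta_j)\bigr)$ acting on the unit-cell fiber plus a uniformly bounded piece coming from $V$, the condition $\det(\widehat{H}_0(\xi+i\eta)-\lambda\,\mathrm{Id})=0$ forces $\max_j|\cosh(2\pi\eta_j)|\sim|\lambda|/(2d)$ as $|\lambda|\to\infty$. Taking the infimum of $2\pi|\eta|$ over $D(\lambda)$ yields $r(\lambda)=\log|\lambda|-\log(2d)+o(1)$, which in particular gives~\eqref{eq:rasymp}.

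For Part~\ref{prop:optimal}, I would produce the sharp example as a rank-one perturbation of $H_0$. Let $G_0(\lambda;n,m):=\bigl((H_0-\lambda)^{-1}\bigr)(n,m)$ denote the Green's function; for non-real $\lambda\notin\sigma(H_0)$ self-adjointness of $H_0$ implies $\operatorname{Im} G_0(\lambda;n,n)\neq 0$ for every $n$, while for real $\lambda\in\mathbb{R}\setminus\sigma(H_0)$ the diagonal Green's function is real-analytic in $\lambda$. Define $\mathcal{E}\subset\mathbb{R}$ to consist of those real $\lambda\notin\sigma(H_0)$ for which $G_0(\lambda;n,n)=0$ for every $n$ in a fundamental domain of $\Gamma$; analyticity and $\Gamma$-periodicity of $n\mapsto G_0(\lambda;n,n)$ make $\mathcal{E}$ discrete, hence of Lebesgue measure zero. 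For $\lambda\in\mathbb{C}\setminus(\mathcal{E}\cup\sigma(H_0))$ pick $n_0\in\mathbb{Z}^d$ with $G_0(\lambda;n_0,n_0)\neq 0$ and set
\begin{equation*}
v(n):=-\frac{\delta_{n,n_0}}{G_0(\lambda;n_0,n_0)},\qquad u(n):=G_0(\lambda;n,n_0);
\end{equation*}
then $(H_0+v-\lambda)u=\delta_{n_0}-\delta_{n_0}=0$, the potential $v$ is compactly supported (hence $\beta$-exponentially decaying for every $\beta>0$), and the Combes--Thomas bound combined with the asymptotic analysis in Part~\ref{asympthmii} shows that $u$ has $\beta_0$-exponential decay for every $\beta_0<\log|\lambda|-\log(2d)$.
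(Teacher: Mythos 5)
Your proposal follows the same overall Floquet/Kuchment--Vainberg architecture as the paper for Part~\ref{Ithm3:resolventdecay}, and Parts~\ref{asympthmii} and~\ref{prop:optimal} are essentially the paper's arguments (Gershgorin comparison with the free dispersion; rank-one perturbation built from the Green's function plus Combes--Thomas), with only a cosmetic difference in Part~\ref{prop:optimal}: you argue the measure-zero property of $\mathcal{E}$ via real-analyticity and connectedness of $\mathbb{C}\setminus\sigma(H_0)$, while the paper invokes Herglotz--Nevanlinna boundary-value theory applied to $-1/G_0(0,0;\cdot)$. Both routes work, and your observation that $\operatorname{Im}G_0(\lambda;n,n)\neq 0$ for non-real $\lambda$ is the same symmetry identity the paper uses.

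The genuine gap is in Part~\ref{Ithm3:resolventdecay}, where you claim to ``upgrade this local vanishing to the global identity $g\equiv 0$, equivalently $vu\equiv 0$.'' That is not what the Kuchment--Vainberg theorem (Theorem~\ref{Kuch-Vainb theorem}) or its local version (Lemma~\ref{Lem:vanishing}) gives. What is established is that the numerator of the Cramer representation \eqref{eq:haturepres} vanishes to sufficiently high order on each irreducible component of $D(\lambda)$ to cancel the pole of $1/\tilde{P}$, so that the \emph{quotient} $\hat u=g/h$ extends to an entire function of $x$. The numerator $g$ itself need not vanish off $D(\lambda)$, and generically does not; a trivial one-variable example is $g(z)=z^2$, $h(z)=z$, where $g/h=z$ is entire but $g\not\equiv 0$. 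Moreover, if one did somehow have $vu\equiv 0$, then $(H_0-\lambda)u=0$ with $u\in\ell^2$ and $\lambda\notin\sigma(H_0)$ would give $u\equiv 0$ directly from invertibility of $H_0-\lambda$; no unique-continuation argument would be needed, which signals that this step of your reasoning is misaligned. The correct chain is: $\hat u$ extends to an entire function on $\mathbb{C}^d$; by Lemma~\ref{Lem:relationship analiticitydecay}\ref{analiticityimpliesdecay} (applied on arbitrarily wide strips), $u$ has $\beta$-exponential decay for \emph{every} $\beta>0$; this is then ruled out by the weak unique continuation principle used in Theorems~\ref{thm1:interiordecay} and~\ref{thm2:endpointdecay}. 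Replacing your ``$g\equiv 0$'' step with this chain repairs the argument and matches the paper.

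A smaller imprecision in Part~\ref{asympthmii}: you assert the refined asymptotic $r(\lambda)=\log|\lambda|-\log(2d)+o(1)$, but the Gershgorin/free-Laplacian comparison only pins down the two-sided bound up to a bounded additive error (the paper's upper bound comes from the explicit solution with $2\cos(2\pi x_1)=\lambda$, which contributes $\tfrac{1}{2\pi}\ln|\lambda/2|$, not $\tfrac{1}{2\pi}\ln(|\lambda|/2d)$). The weaker statement $r(\lambda)/\ln|\lambda|\to 1$ is what is proved and what the theorem asserts. In Part~\ref{prop:optimal} your discreteness claim for $\mathcal{E}$ should be justified by noting $\mathbb{C}\setminus\sigma(H_0)$ is connected and $G_0(\cdot;n,n)$ is not identically zero by the Herglotz positivity, so its real zero set on $\mathbb{R}\setminus\sigma(H_0)$ is discrete; this is worth writing out since the zeros could otherwise a priori fill a gap component.
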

 \begin{remark} Combining items \ref{asympthmii} and \ref{prop:optimal} in the above theorem, one concludes that the rate $r(\lambda)$ is asymptotically sharp.
     
 \end{remark}
  \begin{remark}
     When $\lambda\in \mathbb{R}\setminus (\mathcal{E}\cup \sigma(H_0))$ it follows from the proof that $v_\lambda$ in item \ref{prop:optimal} is real-valued.
 \end{remark}
 
 \begin{remark}\label{rem:surfacedist}

 It follows from our arguments that
     $$r=r(\lambda):=2\pi\mathrm{dist}\left(\mathbb{R}^d,D(\lambda)\right)$$
 where $D(\lambda):=\{x\in \mathbb{C}^d:\,\,\det({\tilde{H}}_0(x)-\lambda I)=0\}$ and ${\tilde{H}}_0(x)$ are the fiber operators obtained from $H_0$ after a Floquet decomposition. See Sections \ref{Sec:Floquet} and \ref{Sec:proofs} for further details.
 \end{remark}

 	\begin{cor}\label{cor3:resolventdecayLaplacian}
   Let $d\geq 1$ and
 $\lambda \notin[-2d,2d]$.
		Assume that $v$ has $\beta$-exponential decay for all $\beta>0$. 
		If  $(-\Delta+v)u=\lambda u$ and $u$ has $\beta$-exponential decay for some $\beta>r(\lambda)$,
 then $u\equiv0$.
	\end{cor}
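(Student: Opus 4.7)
The plan is to obtain Corollary \ref{cor3:resolventdecayLaplacian} directly from Theorem \ref{thm3:resolventdecay} \ref{Ithm3:resolventdecay} by specializing to the case $V \equiv 0$. Concretely, I would first observe that the zero potential $V \equiv 0$ is trivially $\Gamma$-periodic for every admissible $\Gamma$; in particular one may take $q_1 = q_2 = \cdots = q_d = 1$, so that $Q = q_1 q_2 \cdots q_d = 1$ and the setting of Section \ref{subsec:settingsec} applies to $H_0 = -\Delta$. In this special case the Floquet fiber operator of Section \ref{Sec:Floquet} reduces to multiplication by the scalar symbol $-2 \sum_{i=1}^d \cos(2\pi x_i)$, and the spectral band decomposition \eqref{gband} collapses to the single band $[a_1, b_1] = [-2d, 2d]$, which is the standard spectrum of the discrete Laplacian.

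With this identification in hand, the hypothesis $\lambda \notin [-2d, 2d]$ of the corollary is exactly the hypothesis $\lambda \notin \bigcup_{m=1}^{Q}[a_m, b_m]$ of Theorem \ref{thm3:resolventdecay} \ref{Ithm3:resolventdecay}, and the assumptions that $v$ has $\beta$-exponential decay for every $\beta > 0$ and that $u$ has $\beta$-exponential decay for some $\beta > r(\lambda)$ carry over verbatim. Applying the theorem yields $u \equiv 0$. There is no real obstacle here, since the corollary is a direct instance of the main theorem; the rate $r(\lambda) = 2\pi\, \mathrm{dist}(\mathbb{R}^d, D(\lambda))$ from Remark \ref{rem:surfacedist} is inherited from the theorem, and in this free-Laplacian case it can, if desired, be made fully explicit via the dispersion relation $D(\lambda) = \{x \in \mathbb{C}^d : -2 \sum_{i=1}^d \cos(2\pi x_i) = \lambda\}$.
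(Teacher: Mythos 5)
Your argument is correct and is precisely the route the paper intends: the corollary is an immediate specialization of Theorem \ref{thm3:resolventdecay} \ref{Ithm3:resolventdecay} to $V\equiv 0$ with $q_1=\cdots=q_d=1$, so that $Q=1$ and the single band is $[-2d,2d]$. The paper gives no separate proof, treating the corollary as a direct instance, so your proposal matches in both content and spirit.
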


\section{The  Kuchment-Vainberg Theorem}\label{Section: Kuchvainb}

A subset $A$ of a complex manifold $\Omega$ is called an analytic subset if it is locally the zero set of finitely many holomorphic functions.  That is, for all $z \in \Omega$, there exist a neighborhood $U$ of $z$ and holomorphic functions $f_1,\ldots, f_N$ defined on $U$ such that
$$
\{z \in U: f_1(z) = \cdots = f_N(z) = 0\} = U \cap A
$$

Analytic subsets are closed in $\Omega$ and the intersection of finitely many analytic subsets is again an analytic subset.  An analytic subset $A\subset \Omega$ is reducible if it can be written as a nontrivial union of two analytic subsets. Otherwise, $A$ called irreducible.  All analytic subsets have a decomposition into irreducible components; that is, there is a countable index set $I$ such that $A = \cup_{j \in I} Z_j$ where each $Z_j$ is an irreducible analytic subset of $\Omega$. The regular points of $A$ are the points where locally $A$ is a complex manifold. The set of regular points of $A$, denoted henceforth by $\reg(A)$, forms an open, dense subset of $A$. We write $\mathrm{sng } A=A\setminus \reg(A)$ for the set of singular points of $A$.  The dimension of $A$ at a regular point $p\in A$ is the (complex) dimension of the manifold locally equal to $A$ near $p$.  The dimension of a singular point $p$ is the limit supremum of the dimension of a regular point $z$ as $z$ approaches $p$. We refer the reader to \cite{Ch89} for background on analytic subsets as well as to Appendix \ref{The appendix} below where additional properties of analytic subsets will be developed. In what follows we denote by $\mathcal{O}(\Omega)$ the ring of holomorphic functions in $\Omega$.

\begin{theorem}\label{Kuch-Vainb theorem}\cite[Lemma 17]{KuVa00}
Suppose $\Omega \subset \mathbb{C}^d$ is a domain. Let $A = \{z \in \Omega: h(z) = 0\}$ with $h\in \mathcal{O}(\Omega)$ and denote by $A = \cup_{j \in I} Z_j$ its decomposition into irreducible components. Let $g \in \mathcal{O}(\Omega)$ and define $f = g/h$. Suppose that one of the following statements holds
\begin{enumerate} [label=(\roman*)]
\item \label{condition1} $f  \in L^2_{loc}(\mathbb{R}^d)$ and for all $j$,  $Z_j \cap \mathbb{R}^d$ is nonempty and  has Hausdorff dimension $d - 1$.

\item \label{condition2} $f\in O(\Omega^{\prime})$ for some domain  $\Omega^{\prime}\subset\Omega$ such that for all $j\in I$ $Z_j \cap \Omega^{\prime}\neq \emptyset$.
    
\end{enumerate}
Then $f$ extends to a holomorphic function on $\Omega$.
\end{theorem}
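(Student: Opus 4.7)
The plan is to prove that $f=g/h$, which is already holomorphic on $\Omega\setminus A$, extends holomorphically across $A$ through a local analysis at regular points of each irreducible component $Z_j$, followed by a codimension-two removability step to cross the singular locus.

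I would fix a component $Z_j$ and a regular point $p\in Z_j$ not lying on any other $Z_i$, then choose local holomorphic coordinates $(w_1,\ldots,w_d)$ centered at $p$ in which $Z_j=\{w_1=0\}$. Write $h=w_1^{m_j}\tilde h$ and $g=w_1^{n_j}\tilde g$, where $m_j\geq 1$ is the order of vanishing of $h$ along $Z_j$, $n_j\geq 0$ is the order of vanishing of $g$ along $Z_j$, and $\tilde h,\tilde g$ are holomorphic with $\tilde h(p)\neq 0$ and $\tilde g$ not identically zero on $Z_j$ near $p$. The entire theorem reduces to showing $n_j\geq m_j$ for every $j$: by the constancy of vanishing orders along an irreducible analytic set, once the inequality holds at a single regular point it propagates to all of $\reg(Z_j)$, and then $f=w_1^{n_j-m_j}\tilde g/\tilde h$ is locally holomorphic throughout $\reg(A)$. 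Since $\sng(A)\cup\bigcup_{i\neq j}(Z_i\cap Z_j)$ is analytic of complex dimension at most $d-2$ and $f$ is locally bounded on its complement, Riemann's second extension theorem then produces a holomorphic extension to all of $\Omega$.

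Under hypothesis \ref{condition2} the inequality is essentially immediate: pick $p\in\reg(Z_j)\cap\Omega'$, which is nonempty because $\reg(Z_j)$ is open and dense in $Z_j$ while $Z_j\cap\Omega'$ is nonempty and relatively open. Since $f$ is holomorphic at $p$, the identity $g=f\cdot w_1^{m_j}\tilde h$ forces $g$ to vanish to order at least $m_j$ at $p$. Under hypothesis \ref{condition1}, I would instead choose $p$ to be a regular point of $Z_j$ lying in $\R^d$ at which $Z_j\cap\R^d$ has full local real dimension $d-1$; such a point exists because the singular locus of $Z_j\cap\R^d$ (both in the complex and real analytic senses) is of strictly smaller real dimension than $d-1$. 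Writing the local holomorphic defining equation of $Z_j$ as $\phi=0$ and restricting to $\R^d$, one obtains the two-sided estimate
\[
|\phi(x)|\asymp\mathrm{dist}(x,Z_j\cap\R^d),\qquad x\in\R^d\text{ near }p,
\]
so that $|f(x)|^2\asymp\mathrm{dist}(x,Z_j\cap\R^d)^{2(n_j-m_j)}$ on $\R^d$ near $p$. Integrating in a tubular neighborhood of $Z_j\cap\R^d$ and invoking the $L^2_{\mathrm{loc}}(\R^d)$ assumption forces $2(n_j-m_j)>-1$, hence $n_j\geq m_j$.

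The principal technical obstacle is justifying the lower bound $|\phi(x)|\gtrsim\mathrm{dist}(x,Z_j\cap\R^d)$ under hypothesis \ref{condition1}. This requires showing that, at a regular point of $Z_j$ where $Z_j\cap\R^d$ attains the maximal real dimension $d-1$, the set $Z_j$ is locally the complexification of a real analytic hypersurface, so that $\phi|_{\R^d}$ becomes (up to a nonvanishing real analytic factor) a real defining function for $Z_j\cap\R^d$; equivalently, one may invoke a Lojasiewicz-type inequality. Verifying this real-analytic structure statement and the codimension-two extension step are the two items requiring genuine care, while the propagation of $n_j\geq m_j$ along $Z_j$ and the rest of the argument reduce to standard local computations in several complex variables.
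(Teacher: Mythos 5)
Your proposal follows essentially the same route as the paper: reduce to the vanishing-order inequality $n_j\geq m_j$ at a single regular point, propagate it along $Z_j$ by irreducibility, then cross the bad set by a codimension-two removable-singularity argument. Case \ref{condition2} is disposed of the same way, and the $L^2$ counting argument under case \ref{condition1} is the same in spirit. But you have honestly flagged, without resolving, exactly the step where all the work lives: you need to know that near a well-chosen regular point $p\in Z_j\cap\R^d$ the complex hypersurface $Z_j$ is the complexification of a real-analytic hypersurface in $\R^d$, so that $|w_1|$ restricted to $\R^d$ is comparable to $\mathrm{dist}(\cdot,Z_j\cap\R^d)$ to the \emph{first} power (a raw Lojasiewicz inequality gives only some unknown exponent and would not yield the sharp threshold $2(n_j-m_j)>-1$). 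The paper proves this in three pieces packaged into Lemma \ref{Lem:vanishing}: (a) Whitney's stratification theorem (Lemma \ref{C1 manifold leads to real-analytic manifold lemma}) upgrades the Hausdorff-dimension hypothesis to the existence of a real-analytic $(d-1)$-manifold $M\subset Z_j\cap\R^d$; (b) Lemma \ref{lemma with conclusion no open subset} shows $M$ cannot sit inside $\sng(A)\cup A_j'$ because a totally real $(d-1)$-manifold cannot lie in a complex variety of dimension $<d-1$, which is the claim you invoke but do not justify when you say the singular loci have ``strictly smaller real dimension''; and (c) the real-analytic/biholomorphic straightening map of Lemma \ref{special real-analytic and biholomorphic isomorphism} simultaneously flattens $M$ to $\R^{d-1}\times\{0\}$ and preserves $\R^d$, after which Weierstrass division gives the factorizations $h\circ F^{-1}=\xi_d^{m}q_1$, $g\circ F^{-1}=\xi_d^{k}q_2$ needed for the change-of-variables $L^2$ estimate.

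Two smaller remarks. Your phrase ``constancy of vanishing orders along an irreducible analytic set'' is not quite right — the order of $h$ jumps up on the proper analytic subset $A_j'$ — but the conclusion you want is correct and the paper gets it the clean way: if $g$ vanishes to order $\geq k_j$ on an open piece of $Z_j$ then the analytic set $\{\,\partial^\alpha g=0,\ |\alpha|<k_j\,\}$ contains that open piece, hence contains all of $Z_j$ by irreducibility, and the exceptional locus $A_j'$ is absorbed into the codimension-two removable set at the end. Finally, the $d=1$ case must be treated separately (there $Z_j$ is a point, the biholomorphism machinery is vacuous, and the argument is elementary); the paper does this explicitly at the start of its proof.
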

 Part \ref{condition1} has slightly weaker hypotheses than \cite[Lemma 17]{KuVa00}, where the same conclusion is reached under the assumption that for each $j$, $Z_j \cap \mathbb{R}^d$ contains a smooth submanifold of real dimension $d-1$. For now we mention that the improvement presented in the above version relies on a stratification result of Whitney which may be found in \cite[Theorem 1.2.10]{Trotman}. Further details are postponed until Lemma \ref{C1 manifold leads to real-analytic manifold lemma} in the Appendix  \ref {sec:proofovanishing} below. Since we will also need the second statement for Theorem \ref{thm3:resolventdecay}
 we provide a proof of both of these results for completeness. Before doing so, we introduce an auxiliary lemma for convenience. 

\begin{lem}\label{Lem:vanishing} For $d > 1$, in the setting of Theorem \ref{Kuch-Vainb theorem}, for each $j\in I$, let $m_j$ be the minimal order of the zeros of $h$ on $Z_j$. If either \ref{condition1} or \ref{condition2} from Theorem \ref{Kuch-Vainb theorem} holds, then there is a $k_j \geq m_j$ such that $g$ vanishes to order at least $k_j$ on an open subset $V_j$ of $Z_j$.
    
\end{lem}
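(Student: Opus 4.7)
The plan is to reduce each case to a local statement near a well-chosen regular point $p \in Z_j$. Note first that each $Z_j$ has complex dimension $d - 1$, being a proper irreducible component of $A = \{h = 0\}$. At any $p \in \reg(Z_j)$ lying in the open dense subset where $h$ vanishes to its generic (minimal) order $m_j$, we may factor $h = F^{m_j}\,G$ with $F$ a local holomorphic defining function for $Z_j$ satisfying $dF(p)\neq 0$, and $G$ a local unit. Under each hypothesis I aim to show $g$ vanishes to order at least $m_j$ on a neighborhood $V_j$ of such $p$ in $Z_j$, giving the conclusion with $k_j = m_j$.

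Case \ref{condition2} is essentially immediate. On the open dense subset $\Omega'\setminus A$ of $\Omega'$ one has $hf = g$, and since $hf - g$ is holomorphic on $\Omega'$ it extends by continuity (or the identity principle) to $hf = g$ on all of $\Omega'$. Choosing any $p\in \reg(Z_j)\cap\Omega'$ (nonempty, since $\reg(Z_j)$ is dense in $Z_j$ and $Z_j\cap\Omega'\neq \emptyset$) at which $h$ vanishes to order exactly $m_j$, the fact that $f$ is holomorphic and locally bounded near $p$ gives $g = F^{m_j}(Gf)$, so $g$ vanishes to order at least $m_j$ on $Z_j$ near $p$.

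For case \ref{condition1}, I would invoke Lemma~\ref{C1 manifold leads to real-analytic manifold lemma} to obtain a real-analytic submanifold $M\subset Z_j\cap\mathbb{R}^d$ of real dimension $d - 1$. Let $k$ denote the generic order of vanishing of $g$ on $Z_j$, so that near a generic $p\in\reg(Z_j)$ one has $g = F^k \tilde g$ with $\tilde g$ holomorphic and $\tilde g(p)\neq 0$. Density of the relevant strata (using that $\sng(Z_j)\cap M$ together with the loci where $h$ or $g$ vanishes to higher than the generic order are strictly lower-dimensional analytic subsets) allows us to choose such a $p\in M\cap\reg(Z_j)$ satisfying all these conditions simultaneously. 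Suppose for contradiction that $k < m_j$. Then near $p$,
\begin{equation*}
f = F^{k - m_j}\cdot \frac{\tilde g}{G},\qquad |f(x)| \geq c\,|F(x)|^{k - m_j},
\end{equation*}
since $\tilde g/G$ is holomorphic and nonvanishing near $p$.

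The heart of the argument is then to prove $|F(x)|\gtrsim \mathrm{dist}(x, M)$ for $x\in\mathbb{R}^d$ near $p$. Since $p\in\reg(Z_j)$, the complex differential $dF(p)\neq 0$, and the identity $\partial_{x_j}(F|_{\mathbb{R}^d})(p) = \partial_{z_j}F(p)$ valid for holomorphic $F$ implies that $dF(p)|_{\mathbb{R}^d}:\mathbb{R}^d\to\mathbb{C}$ is a nonzero real-linear map. Because $F\equiv 0$ on $M\subset\mathbb{R}^d$, this restriction annihilates $T_p M$, and consequently $F$ has nonzero derivative in some real normal direction at $p$. Choosing real coordinates $(x', t)$ centered at $p$ with $M = \{t = 0\}$ and $\partial_t F(p)\neq 0$, a Taylor expansion yields $F(x', t) = t\,H(x',t)$ with $H(p)\neq 0$, so $|F|\asymp |t|\asymp \mathrm{dist}(x, M)$ in a neighborhood of $p$. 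Integrating $|f|^2 \gtrsim |t|^{-2(m_j - k)}$ in a tubular neighborhood of $M$ produces a divergent integral, since $m_j - k \geq 1$, contradicting $f\in L^2_{\mathrm{loc}}(\mathbb{R}^d)$. The principal technical obstacle is justifying the density statements used to locate $p$ within $M\cap\reg(Z_j)$ satisfying all the required conditions; these rely on standard dimension-counting facts for analytic subsets which I would collect in the appendix.
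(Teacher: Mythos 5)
Your proof is correct, and at the top level it follows the same strategy as the paper: for case \ref{condition2} one picks a regular point of $A$ in $\Omega'$ and reads off the divisibility of $g$ by the defining function from the holomorphy of $f$ there; for case \ref{condition1} one locates a real-analytic $(d-1)$-submanifold $M\subset Z_j\cap\mathbb{R}^d$ avoiding the singular locus and the higher-order vanishing loci of $h$ and $g$, and derives a contradiction with $f\in L^2_{\rm loc}(\mathbb{R}^d)$ by a divergent integral near $M$. The implementation of case \ref{condition1} differs, though. The paper first constructs a biholomorphism $F:U\to V$ that simultaneously flattens $M$ to $(-\epsilon,\epsilon)^{d-1}\times\{0\}$ and sends $U\cap\mathbb{R}^d$ to $V\cap\mathbb{R}^d$ (Lemma~\ref{special real-analytic and biholomorphic isomorphism}), then invokes the Weierstrass division theorem twice (Lemma~\ref{lemma involving weierstrass division theorem}) to extract the $\xi_d$ factors from $h\circ F^{-1}$ and $g\circ F^{-1}$ before comparing powers in the $L^2$ integral. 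You instead take a local holomorphic defining function $F$ for $Z_j$ near a good $p\in M\cap\reg(A)$, observe that $dF(p)|_{\mathbb{R}^d}$ has real rank exactly $1$ (forced by $T_pM$ being a codimension-one kernel, which is a nice observation that the paper does not make explicit), and factor $F$ on the real slice by Hadamard's lemma in suitably flattened real coordinates, giving $|F(x)|\asymp\mathrm{dist}(x,M)$ for real $x$ near $p$. This saves the full machinery of Lemma~\ref{special real-analytic and biholomorphic isomorphism} and Weierstrass division, at the cost of still needing the flattening of $M$ (Lemma~\ref{change of coordinates for real-analytic manifold lemma}) and the same ``$M$ is too big to sit inside a lower-dimensional analytic subset'' facts (Lemma~\ref{lemma with conclusion no open subset}) that you correctly flag as appendix material.

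One minor slip in the write-up: you announce that ``the heart of the argument is to prove $|F(x)|\gtrsim\mathrm{dist}(x,M)$,'' but for the $L^2$ divergence what you actually need is the \emph{upper} bound $|F(x)|\lesssim\mathrm{dist}(x,M)$, since this is what makes $|f|\asymp|F|^{k-m_j}$ blow up like $\mathrm{dist}(x,M)^{-(m_j-k)}$ near $M$. Your subsequent two-sided estimate $|F|\asymp|t|\asymp\mathrm{dist}(x,M)$ supplies the right direction anyway, so the argument is fine; only the flagged inequality is stated the wrong way round. Also, when you write $g=F^k\tilde g$ with $\tilde g(p)\neq0$ you should make sure to choose $p$ not only in $\reg(Z_j)$ but in $\reg(A)$, i.e.\ away from other components of $A$, so that $F$ is a genuine local defining function for $A$ and the factorizations of $h$ and $g$ are both along $Z_j$ alone; this is the same reason the paper invokes Lemma~\ref{lemma that involves local finiteness and irreducibility}.
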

See Definition \ref{Definition in the appendix of what the order is and at least order} for the definition of order.  The proof of this technical lemma is postponed to Section 
\ref {sec:proofovanishing}.

\begin{proof}[Proof of Theorem \ref{Kuch-Vainb theorem}]    We first prove the $d = 1$ case, which is considerably simpler than the $d > 1$ cases.  When $d = 1$, $A$ is discrete and each $Z_j$ is a singleton.  Thus in \ref{condition1}, $A \subset \mathbb{R}$.  In a neighborhood of $x_0 \in A$, $f(z) = q(z)(z-x_0)^{k}$ for a nonzero holomorphic function near $x_0$ and integer $k$.  Since $f$ is square-integrable on an interval containing $x_0$, $k \geq 0$. Thus, the conclusion follows.  For \ref{condition2}, the conclusion follows after noticing that $A \subset \Omega^{\prime}$.

We now prove the $d > 1$ cases.  Since \ref{condition1} or \ref{condition2} holds, $h$ is not identically zero.  Thus, the dimension of $A$ at any of its points is equal to $d-1$, \cite[p.2.6.~Proposition]{Ch89} and, in particular, $\dim(Z_j) = d-1$.  Let $m_j$ be the minimal order of the zeros of $h$ on $Z_j$. By assumption we have that  $m_j \geq 1$. Consider the following analytic subset of $\Omega$:
    \begin{equation}\label{definition of Aj prime}
    A_j^{\prime} = \left\{z \in \Omega: {\partial^{\alpha}h \over \partial z^{\alpha}}(z) = 0, \quad 0 \leq |\alpha| \leq m_j\right\} \cap Z_j.
    \end{equation}
    We note that $A_j^{\prime}$ may possibly be empty.
         By \cite[p.5.3.~Corollary 1]{Ch89}, since $A_j^{\prime} \subsetneq Z_j$  with $Z_j$ irreducible, we have that $A_j^{\prime}$ is nowhere dense in $Z_j$ and, moreover, 
    \begin{equation}\label{dimension of Ajprime}
        \dim(A^{\prime}_j) < \dim(Z_j) = d - 1.
    \end{equation}
    Additionally, $Z_j \setminus A_j^{\prime}$ is open in $Z_j$.  Note that by Lemma \ref{Lem:vanishing}, $g$ vanishes to order at least $k_j \geq m_j$ on an open subset $V_j$ of $Z_j$.

    We now conclude the proof for both cases simultaneously. Let 
    $$
    B = \left\{z \in \Omega: {\partial^{\alpha} g \over \partial z^{\alpha}}(z) = 0, |\alpha| < k_j\right\}.
    $$
    $B$ is an analytic subset of $\Omega$ such that $V_j \subset B\cap Z_j$. Since $Z_j$ is irreducible we have that $Z_j \subset B$, cf. \cite[p.5.3.~Corollary 2]{Ch89}. Since  $Z_j \setminus \left(\sng(A) \cup A_j^{\prime}\right)\subset \reg(A)$, given $p \in Z_j \setminus \left(\sng(A) \cup A_j^{\prime}\right)$ there is an $\epsilon > 0$ such that
    $$
    \mathbb{B}(p, \epsilon) \cap A = \mathbb{B}(p, \epsilon) \cap Z_j
    $$
    is a complex manifold, see Lemma \ref{lemma that involves local finiteness and irreducibility} below. Here $\mathbb{B}(p, \epsilon)$ is the ball centered at $p$ of radius $\epsilon$.  The set $A_j^{\prime} \cup \left(\sng(A) \cap Z_j\right)$ is a closed subset of $Z_j$.  Thus, after possibly shrinking $\epsilon$, we see that
    $$
    \mathbb{B}(p, \epsilon) \cap Z_j \subset Z_j \setminus \left( A_j^{\prime} \cup \sng(A)\right) \subset Z_j \setminus A_j^{\prime},
    $$
    as well.  In summary, $\mathbb{B}(p, \epsilon) \cap Z_j$ is a complex manifold where $h(z)$ vanishes to order $m_j$ on it and $g(z)$ vanishes to order at least $k_j \geq m_j$ on it. After a holomorphic change of coordinates on $\mathbb{B}(p, \epsilon) \cap Z_j$ we see that
    $$
    f = {g \over h} = z_d^{k_j - m_j}q(z),
    $$
    where $q$ is holomorphic and $k_j \geq m_j$, see Lemma \ref{lemma involving weierstrass division theorem} below. Thus $f$ initially defined on $\mathbb{B}(p, \epsilon) \setminus A = \mathbb{B}(p, \epsilon) \setminus Z_j$ has an analytic continuation to $\mathbb{B}(p, \epsilon)$ and hence to $\Omega \setminus \left(\cup_{j \in I} A_j^{\prime} \cup \sng(A)\right)$ as well.  The union is countable and the Hausdorff dimension of  each of the sets $\{A_j^{\prime}\}_{j\in I}$, $\sng(A)$ is at most $2d - 4$. It follows that $f$ extends to a holomorphic function on $\Omega$, cf. \cite[Appendix A.1.3.~Proposition 3]{Ch89}.
    
\end{proof}

\section{Basics of Floquet-Theory}
\label{Sec:Floquet}
Before presenting the proofs of Theorems \ref{thm1:interiordecay}-\ref{thm3:resolventdecay} we recall some basic facts regarding various unitary transformations which will be used in the sequel. For a more detailed presentation we refer to \cite[Section 5]{FLM1}.
Let
 \begin{equation}
 W=\mathbb{Z}^d\cap \prod^{d}_{j=1}[0,q_j)
 \end{equation}
 be a fundamental domain for the potential $V$ and define
 
	\begin{equation*}
	W^{\ast}=\left\{0,\frac{1}{q_1},\frac{2}{q_1},\cdots,\frac{q_1-1}{q_1}\right\}\times\cdots \times \left\{0,\frac{1}{q_d},\frac{2}{q_d},\cdots,\frac{q_d-1}{q_d}\right\}\subset [0,1]^d.
	\end{equation*}
	The discrete Fourier transform of $V$ is  
	\begin{equation*}
	\widehat{V}(l) =\frac{1}{\sqrt{Q}}\sum_{ n\in {W} } V(n) e^{-2\pi i l\cdot n},\,\,\,l\in W^{\ast}.
	\end{equation*}
	Here, $l\cdot n= \sum_{j=1}^d l_j n_j$ for $l=(l_1,l_2,\ldots,l_d)\in W^{\ast}$ and $n=(n_1,n_2,\ldots,n_d)\in \Z^d$.
	For convenience, we  extend  $\widehat{V}(l)$ to  $W^{\ast}+\Z^d$ periodically. Namely, for all $l\in W^{\ast}$ and $n\in \Z^d$ we let
	\begin{equation*}
	\widehat{V}(l+n)=\widehat{V}(l).
	\end{equation*}
	Then, the inversion formula
	\begin{equation*}
	V(n)=\frac{1}{\sqrt{Q}}\sum_{l \in W^{\ast}}\widehat{V}(l) e^{2\pi il\cdot n}
	\end{equation*}
 holds true for any $n\in \Z^d$.
	For a function $u\in\ell^1(\Z^d)$, its Fourier transform defined on the torus $\T^d:=\R^d/\Z^d$ is
	$\mathcal{F}(u) :\T^d\to\C$ is given by
	\begin{equation*}
	\mathcal{F}(u)(x)=\sum_{n\in \Z^d}u(n)e^{-2\pi i n\cdot x}.
	\end{equation*}
 The following Plancherel type identity holds whenever $u_1,u_2\in \ell^1(\Z^d)$:
 \begin{equation}\label{eq:Plancherel}
 \langle \mathcal{F}(u_1), \mathcal{F}(u_2) \rangle_{L^2(\T^d)}=\langle u_1, u_2 \rangle_{\ell^2(\Z^d)}.
 \end{equation}
 This enables the extension of $\mathcal{F}$ to $\ell^2(\Z^d)$.
 We will often write $\hat{u}$ to denote $\mathcal{F}(u)$ as well. 
	
	For any $q$-periodic function $V:\mathbb{Z}^d\rightarrow \mathbb{C}$   and $u\in \ell^2(\Z^d)$, one has that
	\begin{equation*}
	\widehat{Vu}(x)=\frac{1}{\sqrt{Q}}\sum_{l\in W^{\ast} } \widehat{V}(l)\hat{u}(x-l),
	\end{equation*}
 see \cite[Proposition 5.1]{FLM1}.
 Let us now define $\mathbb{T}^d_* = \mathbb{R}^d/\Gamma^*$,
\begin{equation*}\mathcal{H}_q =  \int_{\mathbb{T}^d_*}^\oplus \mathbb{C}^W \, \frac{dx}{|\mathbb{T}^d_*|} = L^2\left(\mathbb{T}^d_*,\mathbb{C}^W; \frac{dx}{|\mathbb{T}^d_*|}\right) 
\end{equation*}
and $\mathcal{F}_q : \ell^2(\mathbb{Z}^d) \to \mathcal{H}_q$ given by
\begin{equation}\label{eq:Flouquettransf}(\mathcal{F}_q(u))(x,j)= \widehat{u}(x,j)=\widehat{u}_j(x) := \sum_{n \in \mathbb{Z}^d} e^{-2 \pi i \langle n \odot q,x\rangle}u(j+n\odot q), \ x \in \mathbb{T}^d_*, \ j \in W,
\end{equation}
where $a\odot b = (a_1b_1,\ldots,a_db_d)$ for ordered $d$-tuples $a=(a_1,\ldots,a_d)$ and $b = (b_1,\ldots,b_d)$.
As usual, $\mathcal{F}_q$ is initially defined for $\ell^1$ vectors, but  has a unique extension to a unitary operator on $\ell^2$ via Plancherel's identity.

\begin{lem}\cite[Proposition 5.2]{FLM1} \label{lem:directint}
The operator $\mathcal{F}_q$ is unitary. If $V$ is $q$-periodic, then 
\[
\mathcal{F}_q H_0 \mathcal{F}_q^* = \int^\oplus_{\mathbb{T}^d_*} \widetilde{H}_{0}(x) \, \frac{dx}{|\mathbb{T}^d_*|},
\]
where $ \widetilde{H}_{0}(x)$ denotes the restriction of $H_0$ to $W$ with boundary conditions 
\begin{equation}\label{g1}
u(n+m\odot q)  = e^{2\pi i \langle m\odot q,x\rangle } u(n), \quad  n,m\in\Z^d.
\end{equation}
\end{lem}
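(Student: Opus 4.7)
The plan is to verify the two assertions by direct computation, exploiting the coset decomposition $\mathbb{Z}^d = \bigsqcup_{j \in W}(j + \Gamma)$ induced by the fundamental domain. For the unitarity, $\mathcal{F}_q$ factors through the obvious isomorphism $\ell^2(\mathbb{Z}^d) \cong \bigoplus_{j \in W} \ell^2(j + \Gamma)$; on each summand, the map
\[
(u(j + n \odot q))_{n \in \mathbb{Z}^d} \;\longmapsto\; \sum_{n \in \mathbb{Z}^d} e^{-2\pi i \langle n \odot q,\, x\rangle}\, u(j + n \odot q)
\]
is, after the relabeling $\gamma = n\odot q$, the standard Fourier transform from $\ell^2(\Gamma)$ onto $L^2(\mathbb{T}^d_*,\, dx/|\mathbb{T}^d_*|)$. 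The Plancherel identity \eqref{eq:Plancherel} applied on the sublattice $\Gamma$, together with the orthogonality of the $|W|$ summands (each supported on a distinct coset), yields $\|\mathcal{F}_q u\|_{\mathcal{H}_q}^2 = \|u\|_{\ell^2(\mathbb{Z}^d)}^2$, which gives the unitarity after extending from $\ell^1 \cap \ell^2$ by density.

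For the intertwining formula, I would split $H_0 = -\Delta + V$ and treat each piece separately. The potential part is immediate from $\Gamma$-periodicity: $V(j + n \odot q) = V(j)$ allows $V$ to be pulled out of the sum over $n$, yielding $(\mathcal{F}_q(Vu))(x,j) = V(j)\,\widehat{u}_j(x)$, which is multiplication by a diagonal matrix on $\mathbb{C}^W$ and is therefore fiberwise. For the adjacency part, fixing $j \in W$ and writing a generic neighbor as $j + n \odot q + e$ with $e \in \{\pm e_i : 1 \le i \le d\}$, one expresses $j + e = \tilde{j}(e) + m(e) \odot q$ uniquely with $\tilde{j}(e) \in W$ and $m(e) \in \mathbb{Z}^d$ (here $m(e) = 0$ except when $j$ lies on the relevant face of $W$, in which case $m(e) = \pm e_i$). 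Substituting $n \mapsto n - m(e)$ in the Floquet sum and pulling out the resulting phase gives
\[
(\mathcal{F}_q(\Delta u))(x, j) = \sum_{|e|=1} e^{2\pi i \langle m(e) \odot q,\, x\rangle}\, \widehat{u}_{\tilde{j}(e)}(x),
\]
which is precisely the action of $\Delta$ on $\mathbb{C}^W$ under the quasi-periodic boundary conditions \eqref{g1}. Combining the two contributions produces $(\mathcal{F}_q H_0 u)(x,j) = (\widetilde{H}_0(x)\,\widehat{u}(x,\cdot))(j)$, which is the claimed direct integral identity.

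The only nontrivial step will be the bookkeeping in the adjacency computation: one must carefully track which neighbors of $j \in W$ leave $W$, namely those with $j_i \in \{0, q_i - 1\}$, and verify that the shift of the summation index produces exactly the Bloch phase prescribed by \eqref{g1}. Once this combinatorial check is carried out, both the unitarity and the fibered representation reduce to standard harmonic analysis on the discrete torus, so no deeper input is required.
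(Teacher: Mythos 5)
The paper does not prove this lemma: it is quoted verbatim from \cite[Proposition 5.2]{FLM1}, so there is no internal proof to compare against. Your argument is a correct, self-contained verification along the standard lines. The coset decomposition $\ell^2(\mathbb{Z}^d)\cong\bigoplus_{j\in W}\ell^2(j+\Gamma)$ together with Fourier--Plancherel on each summand gives unitarity, and the split of $H_0$ into $V$ (which commutes with the fibration because $V(j+n\odot q)=V(j)$) and $-\Delta$ (handled by the index shift $n\mapsto n-m(e)$ when a neighbor $j+e$ leaves $W$) reproduces exactly the fiber operator $\widetilde H_0(x)$ with the Bloch boundary condition \eqref{g1}.

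One small caution on citation: the Plancherel identity you invoke, \eqref{eq:Plancherel}, is stated in the paper for the full lattice $\mathbb{Z}^d$ paired with the torus $\mathbb{T}^d$. What you actually need on each coset is the rescaled version on the sublattice $\Gamma=q_1\mathbb{Z}\oplus\cdots\oplus q_d\mathbb{Z}$ paired with $\mathbb{T}^d_*=\mathbb{R}^d/\Gamma^*$ under the normalized measure $dx/|\mathbb{T}^d_*|$. This is precisely why $\mathcal{H}_q$ carries that normalization, but it is worth saying so explicitly rather than citing \eqref{eq:Plancherel} directly, since otherwise a reader may wonder about a missing factor of $Q=q_1\cdots q_d$. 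Aside from that bookkeeping point, the proof is complete; no deeper input is needed, consistent with the fact that the original reference treats it as a routine direct-integral decomposition.
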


Given $x \in \mathbb{R}^d$, let $\mathcal{F}^{x}$ be the Floquet-Bloch transform defined on $\mathbb{C}^{\#W}$ as follows: for any vector on $W$, $\{u(n)\}_{n\in W}$, we set
\[ [\mathcal{F}^{x} u](l) 
= \frac{1}{\sqrt{Q}} \sum_{n \in W} e^{-2\pi i  \sum_{j=1}^d  \left(\frac{l_j}{q_j}+x_j\right) n_j }u(n), \quad l\in W .\]

Therefore,
\[ [(\mathcal{F}^{x} )^*u](l)
= \frac{1}{\sqrt{Q}} \sum_{n \in W} e^{2\pi i  \sum_{j=1}^d  \left(\frac{n_j}{q_j}+x_j\right) l_j }u(n), \quad l\in W .\]
Letting $z_j=e^{2\pi i x_j}$ we have the following:
\begin{lem}\cite[Proposition 5.3]{FLM1} \label{prop:floquetTransf}
Assume that $V$ is $q$-periodic.
Then 
$\widetilde{H}_0(x)$ given by \eqref{g1}  is unitarily  equivalent to 	
$
D^x+B_V,
$
where $D^x$ is a diagonal matrix with entries
\begin{equation}\label{D}
D^x(n,n^\prime) = -\sum^{d}_{j=1}\left(\mu^{j}_{n}e^{2\pi i x_j}+\frac{1}{\mu^{j}_{n}e^{2\pi i x_j}}\right)\delta_{n,n^{\prime}},
\end{equation}
where
$\mu^{j}_{n}=e^{2\pi i\frac{n_j}{q_j}}$ , and $B=B_V$ has entries related to the discrete Fourier transform of $V$ via
\begin{equation}\label{B} B(n,n')=\widehat V\left(\frac{n_1-n'_1}{q_1},\ldots,\frac{n_{d}-n'_d}{q_d}\right).
\end{equation}
\end{lem}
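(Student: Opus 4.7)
The plan is to diagonalize the kinetic part of $\widetilde{H}_0(x)$ on a natural plane-wave basis, and then read off the action of multiplication by $V$ on the same basis.

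Step 1. For each $l \in W$, introduce the vector
$$\psi_l(n) = \frac{1}{\sqrt{Q}}\, \exp\!\left(2\pi i \sum_{j=1}^{d} \left(\frac{l_j}{q_j}+x_j\right) n_j\right), \quad n \in W.$$
Orthogonality of characters on $\prod_j \mathbb{Z}/q_j\mathbb{Z}$ implies that $\{\psi_l\}_{l \in W}$ is an orthonormal basis of $\mathbb{C}^W$, and from the formulas above for $\mathcal{F}^x$ and $(\mathcal{F}^x)^*$ one reads off $\psi_l = (\mathcal{F}^x)^* e_l$. This yields unitarity of $\mathcal{F}^x$ and reduces the claim to computing $\widetilde{H}_0(x)\psi_l$ in the $\{\psi_{l'}\}$ basis.

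Step 2. Extend each $\psi_l$ to all of $\mathbb{Z}^d$ via the same closed-form formula. A direct check, using that $l_j\in\mathbb{Z}$, shows this extension satisfies the quasi-periodic boundary condition \eqref{g1}, so $\psi_l$ represents a legitimate element of the domain of $\widetilde{H}_0(x)$. Since $\psi_l$ factors as a product of one-dimensional exponentials, it is a simultaneous eigenvector of each unit shift $(T_{\pm e_j}u)(n) = u(n\pm e_j)$ with eigenvalues $(\mu_l^j e^{2\pi i x_j})^{\pm 1}$, where $\mu_l^j := e^{2\pi i l_j/q_j}$. Summing over $j$ gives
$$(-\Delta \psi_l)(n) = -\sum_{j=1}^d \left(\mu_l^j e^{2\pi i x_j} + \frac{1}{\mu_l^j e^{2\pi i x_j}}\right) \psi_l(n),$$
which is precisely the diagonal entry $D^x(l,l)$ of \eqref{D}.

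Step 3. For the multiplication operator $V$, compute the matrix element $\langle \psi_l, V\psi_{l'}\rangle_{\mathbb{C}^W}$. Substituting the discrete Fourier expansion $V(n) = Q^{-1/2}\sum_{l'' \in W^*} \widehat{V}(l'') e^{2\pi i l''\cdot n}$ and interchanging sums, the $n$-sum reduces to a character sum on $\prod_j \mathbb{Z}/q_j\mathbb{Z}$, which by orthogonality selects the unique frequency $l''_j \equiv (l_j - l'_j)/q_j \pmod{1}$. That frequency produces the entry $\widehat{V}((l_1-l'_1)/q_1,\ldots,(l_d-l'_d)/q_d) = B(l,l')$ of \eqref{B}. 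Combining with Step 2 yields $(\mathcal{F}^x)\widetilde{H}_0(x)(\mathcal{F}^x)^* = D^x + B_V$.

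The only point requiring genuine care is Step 2: one must verify that the plane-wave extension of $\psi_l$ actually satisfies \eqref{g1}, so that the eigenvalue identity for $T_{\pm e_j}$ persists when $n$ lies on the boundary of $W$ and $n\pm e_j$ leaves $W$. Tracking the interplay between the Floquet phase $e^{2\pi i x_j}$ and the lattice phase $e^{2\pi i l_j/q_j}$, together with $l_j\in\mathbb{Z}$, shows that the boundary terms combine cleanly into the single factor $\mu_l^j e^{2\pi i x_j}$ appearing in $D^x$. The remainder is standard Fourier analysis on the finite abelian group $\prod_j \mathbb{Z}/q_j\mathbb{Z}$.
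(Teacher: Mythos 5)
Your argument is the intended one: the definitions of $\mathcal{F}^x$ and $(\mathcal{F}^x)^*$ immediately preceding the statement are there precisely so that $\psi_l := (\mathcal{F}^x)^* e_l$ are the Floquet--Bloch plane waves on $W$, and your three steps — verify that the plane-wave extension obeys the boundary condition \eqref{g1} so that the shifts $T_{\pm e_j}$ act diagonally, read off $D^x$, and compute the matrix of $V$ in this basis via character orthogonality — are exactly what conjugation by $\mathcal{F}^x$ amounts to. (This lemma is cited from \cite{FLM1}, so the present paper gives no proof, but your route is the standard one the setup invites.)

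One detail you gloss over in Step 3: with the paper's normalization $\widehat{V}(l)=Q^{-1/2}\sum_{n\in W}V(n)e^{-2\pi i l\cdot n}$ and $\psi_l(n)=Q^{-1/2}e^{2\pi i\sum_j(l_j/q_j+x_j)n_j}$, the matrix element works out to
$$
\langle \psi_l,\,V\psi_{l'}\rangle \;=\; \frac{1}{Q}\sum_{n\in W}V(n)\,e^{2\pi i\sum_j \frac{(l'_j-l_j)n_j}{q_j}}\;=\;\frac{1}{\sqrt{Q}}\,\widehat V\!\left(\frac{l_1-l'_1}{q_1},\ldots,\frac{l_d-l'_d}{q_d}\right),
$$
which carries an extra factor $Q^{-1/2}$ relative to \eqref{B}. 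This is almost certainly a harmless convention mismatch between the present paper and \cite{FLM1} (the latter presumably defining $\widehat V$ with a $Q^{-1}$ prefactor), and it is irrelevant downstream since only $\|B\|$ enters the later estimates; but if you want to track constants, you should either adopt the $Q^{-1}$ normalization or insert the $Q^{-1/2}$ in \eqref{B}.
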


	

 \section{Proofs of Theorems \ref{thm1:interiordecay}-\ref{thm3:resolventdecay} \ref{Ithm3:resolventdecay}}\label{Sec:proofs}
 In order to prove the main theorems of this note we will make use of a well-known connection between the decay of Fourier coefficients and the regularity of functions $\hat{f} \in L^2(\mathbb{T}^d)$. Since we will need quantitative estimates we provide all of the details for completeness.
 
\subsection{Analyticity in a strip and relationship with the Fourier coefficients}

For $\rho > 0$, we let $\mathcal{S}_{\rho} = \{z=(z_1,\ldots,z_k): \max_{k} |\hbox{Im}\, z_k| < \rho\}$.  Below we denote $\|n\|_{\max} = \max \{\,|n_j|:\,\,j=1,\ldots, d\}$. 

\begin{lem}\label{Lem:relationship analiticitydecay}
    Let $\hat{f} \in L^2(\mathbb{T}^d)$ and $\{f_n\}$ be its Fourier coefficients; that is,
    $$
    f_n = \int_{\mathbb{T}^d}\hat{f}(x)e^{-2\pi in\cdot x}dx,\quad n \in \mathbb{Z}^d,
    $$
    where $n \cdot x = \sum_{k=1}^d n_kx_k$.
    \begin{enumerate}[label=(\alph*)]
        \item\label{decayFourierimpliesanaliticity} Suppose that $|f_n| \leq C_1e^{-C_2\|n\|_{\max}}$ for some constants $C_1, C_2 > 0$.  Then $\hat{f}$ is analytic in $\mathcal{S}_{C_2 \over 2\pi}$.

        \item \label{analiticityimpliesdecay} Suppose that $\hat{f}$ is analytic in $\mathcal{S}_{C_2}$.  Then for any $\epsilon > 0$,
        $$
        |f_n| \leq C_3\left\|\hat{f}\right\|_{L^\infty(\mathcal{S}_{C_2 - \epsilon})}e^{-2\pi\|n\|_{\max}(C_2 - \epsilon)}, \quad n \in \mathbb{Z}^d,
        $$
        where $C_3$ is a numerical constant independent of $C_2, n$ and $\hat{f}$.
    \end{enumerate}
\end{lem}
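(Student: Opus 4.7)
The plan is to treat the two parts by direct Fourier-analytic arguments: part (a) by verifying uniform convergence of the Fourier series in a complex strip, and part (b) by a one-variable contour shift applied to whichever coordinate direction realizes $\|n\|_{\max}$.

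For part (a), I would define the candidate holomorphic extension by the formal series
\begin{equation*}
\hat{f}(z) = \sum_{n \in \mathbb{Z}^d} f_n\, e^{2\pi i n \cdot z},
\end{equation*}
and prove that it converges uniformly on compact subsets of $\mathcal{S}_{C_2/(2\pi)}$. For $z = x + iy$ with $\|y\|_\infty \le \rho < C_2/(2\pi)$, the estimate $|f_n e^{2\pi i n \cdot z}| \le C_1 e^{-C_2\|n\|_{\max} + 2\pi|n \cdot y|}$ together with $|n \cdot y| \lesssim \|n\|_{\max}\|y\|_\infty$ (up to a dimensional factor) produces a geometric majorant that is summable over $\mathbb{Z}^d$ as soon as $\rho$ is slightly smaller than $C_2/(2\pi)$. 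Each partial sum is entire, so Weierstrass's theorem on normal convergence of holomorphic functions yields analyticity of $\hat{f}$ on the whole strip.

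For part (b), given $n \in \mathbb{Z}^d \setminus \{0\}$, choose $j^\star$ with $|n_{j^\star}| = \|n\|_{\max}$ and shift the $x_{j^\star}$-integration in
\begin{equation*}
f_n = \int_{\mathbb{T}^d} \hat{f}(x)\, e^{-2\pi i n \cdot x}\, dx
\end{equation*}
from $[0,1)$ to $[0,1) + i t$ with $t = -\operatorname{sgn}(n_{j^\star})(C_2 - \epsilon)$, while keeping the other $x_j$'s real. Fixing these remaining parameters, the resulting one-variable integrand $F(x_{j^\star}) := \hat{f}(x)\, e^{-2\pi i n \cdot x}$ is holomorphic on the strip $\{|\hbox{Im}\,x_{j^\star}| < C_2\}$ and $1$-periodic in $x_{j^\star}$, the latter because $\hat{f}$ is $\mathbb{Z}^d$-periodic and $e^{-2\pi i n_{j^\star}} = 1$. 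Cauchy's theorem applied to the rectangle with corners $0,\,1,\,1+it,\,it$ then yields $\int_0^1 F(u)\,du = \int_0^1 F(u+it)\,du$, the vertical sides cancelling by periodicity. Fubini returns the same identity for $f_n$ with the contour shifted, and the modulus of the shifted integrand is bounded by $\|\hat{f}\|_{L^\infty(\mathcal{S}_{C_2-\epsilon})}\, e^{2\pi n_{j^\star} t} = \|\hat{f}\|_{L^\infty(\mathcal{S}_{C_2-\epsilon})}\, e^{-2\pi(C_2-\epsilon)\|n\|_{\max}}$, which is the announced bound.

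The only delicate point, though essentially routine, is the justification of the contour shift in higher dimensions: one must verify holomorphy and periodicity of $F$ in $x_{j^\star}$ (both immediate from the hypotheses on $\hat{f}$) and then combine the one-variable Cauchy argument with the integrations in the remaining $d-1$ real variables via Fubini. The case $n = 0$ is trivial, since $|f_0|$ is controlled by $\|\hat{f}\|_{L^\infty(\mathbb{T}^d)}$. Everything else reduces to bookkeeping with the norms $\|\cdot\|_{\max}$ and $\|\cdot\|_1$ on $\mathbb{Z}^d$.
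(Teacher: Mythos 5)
Your proposal is correct and follows essentially the same two-step argument the paper uses: for part (a), uniform convergence of the Fourier series $\sum_n f_n e^{2\pi i n\cdot z}$ on compact subsets of the strip, and for part (b), a one-variable contour shift (using periodicity to cancel the vertical edges) in the coordinate $j^\star$ realizing $\|n\|_{\max}$, followed by a Fubini-type reduction over the remaining real variables. Both your proof and the paper's share the same (harmless for the applications here) imprecision in part (a) regarding the dimensional factor in bounding $|n\cdot y|$ by $\|n\|_{\max}$ times a norm of $y$, but otherwise the argument and the final estimates match the paper's proof.
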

\begin{proof}
    To prove part 1, it suffices to show that
    $$
    \hat{f}(z) = \sum_{n \in \mathbb{Z}^d}f_ne^{2\pi i n \cdot z}
    $$
    converges uniformly on compact subsets of $S_{C_2 \over 2\pi}$.  Notice that 
    \begin{eqnarray}
        \sum_{n \in \mathbb{Z}^d} | \hat{f}(z) | 
        &\leq C_1\sum_{l=0}^\infty\sum_{\{n: \|n\|_{\max} = l\}}e^{\|n\|_{\max}\left(-C_2+ 2\pi|\mathrm{Im}\,z|\right)} \nonumber
        \\
        &\leq C_1\sum_{l=0}^{\infty} 2d(2l+1)^{d-1}e^{l\left(-C_2+ 2\pi|\mathrm{Im}\,z|\right)}, \label{this series converges uniformly on compact sets}
    \end{eqnarray}
    where we used that 
    $$
    \left|\{n: \|n\|_{\max} = l\}\right| \leq 2d(2l + 1)^{d-1}.
    $$
    The series \eqref{this series converges uniformly on compact sets} converges uniformly in any compact subset of $S_{C_2 \over 2\pi}$, which concludes the proof of \ref{decayFourierimpliesanaliticity}.

    To prove \ref{analiticityimpliesdecay}, start by letting $R =  C_2 - \epsilon$, where $C_2$ is as in \ref{decayFourierimpliesanaliticity}, and denote
    $$
    \gamma_1 = [0, 1], \quad \gamma_2 = [1, 1 - iR], \quad \gamma_3 = [1 - iR, -iR], \quad \gamma_4 = [-iR, 0],
    $$
    and for any $j \in \{1,\ldots,d\}$, let $z_1,\ldots, z_{j-1}, z_{j+1},\ldots, z_d \in [0, 1]$. Since $\hat{f}(z+e_j)=\hat{f}(z)$ for any $j=1,\ldots,d$ we have that
    \begin{equation*}\label{using periodicity assumption}
        \int_{\gamma_2} \hat{f}(z_1,\ldots, z_d)e^{-2\pi i n_jz_j}dz_j = -\int_{\gamma_4} \hat{f}(z_1,\ldots, z_d)e^{-2\pi i n_jz_j}dz_j.
    \end{equation*}
    It then follows by the analyticity of $\hat{f}$ on $S_{C_2}$ that

    \begin{equation}\label{eq:movecontour}
    \int_{\gamma_1} \hat{f}(z)e^{-2\pi i n_j z_j}dz_j = \int_{-\gamma_3}\hat{f}(z)e^{-2\pi i n_j z_j}dz_j.
    \end{equation}
    Fix $n \in \mathbb{Z}^d$.  Without loss of generality (after possibly reordering the coordinates) we may suppose that $|n_1| = \|n\|_{\max}$.  We may also assume that $n_1 \geq 0$.  (When $n_1 < 0$, to modify the proof one changes $R$ to $-R$ in the contour integrals $\gamma_2$, $\gamma_3$ and $\gamma_4$.)  Notice that by \eqref{eq:movecontour} and the choice of $n_1$
    \begin{align*}
        |f_n| &= \left|\int_{\mathbb{T}^{d-1}}e^{-2\pi i \sum_{j=2}^d n_jx_j}dx_2\cdots dx_d\int_{[-iR, 1 - iR]} \hat{f}(z_1,x_2,\ldots,x_d)e^{-2\pi in_1z_1}dz_1\right|\\
        &\leq C_3\|\hat{f}\|_{L^\infty(\mathcal{S}_{C_2 - \epsilon})}e^{-2\pi\|n\|_{\max}(C_2 - \epsilon)},
        \end{align*}
        which concludes the proof.
\end{proof}

\begin{cor}
  Let $u\in \ell^2\left(\mathbb{Z}^d\right)$ and let $\hat{u}\in L^2\left(\mathbb{T}^d_*,\mathbb{C}^W; \frac{dx}{|\mathbb{T}^d_*|}\right)$ be its Floquet transform defined by \eqref{eq:Flouquettransf}.

    \begin{enumerate}[label=(\alph*)]
        \item\label{eq:utouhat} Suppose that $|u(n)| \leq C_1e^{-C_2\|n\|_{\max}}$ for some constants $C_1, C_2 > 0$.  Then $\hat{u}(\cdot,j)$ is analytic in $\mathcal{S}_{C_2 \over 2\pi}$ for each $j\in W$.

        \item \label{eq:uhattou} Suppose that $\hat{u}(\cdot,j)$ is analytic in $\mathcal{S}_{C_2}$ for each $j\in W$.  Then for any $\epsilon > 0$ and  $n \in \mathbb{Z}^d$
        $$
        |u(n)| \leq C_3\max_{j\in W}\left\|\hat{u}(\cdot,j)\right\|_{L^\infty(\mathcal{S}_{C_2 - \epsilon})}e^{-2\pi\|n-\ell(n)\|_{\max}(C_2 - \epsilon)},
        $$
        where $C_3=C_3(d,q)>0$ and $\ell(n)$ is the only vector in $W$ satisfying
        $n=m\odot q +\ell(n)$ for some $m\in \mathbb{Z}^d$.
    \end{enumerate}
 \end{cor}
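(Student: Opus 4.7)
The plan is to reduce both parts to the corresponding statements in Lemma~\ref{Lem:relationship analiticitydecay} by viewing each Floquet fiber as (essentially) a Fourier series on a standard torus. For fixed $j\in W$, the defining series
\begin{equation*}
\hat{u}_j(x)=\sum_{m\in\mathbb{Z}^d}u(j+m\odot q)\,e^{-2\pi i \langle m\odot q,x\rangle}
\end{equation*}
becomes a Fourier series on $\mathbb{T}^d$ in the variable $y=q\odot x$, with coefficients the values $\{u(j+m\odot q)\}_{m\in\mathbb{Z}^d}$. This identification transfers the decay$\leftrightarrow$analyticity correspondence of Lemma~\ref{Lem:relationship analiticitydecay} to the present setting, up to tracking factors of $q$.

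\textbf{Part~\ref{eq:utouhat}.} I would fix $j\in W$ and bound the defining series term-by-term on a complex strip. Combining the hypothesis $|u(n)|\le C_1 e^{-C_2\|n\|_{\max}}$ with the elementary inequality $\|j+m\odot q\|_{\max}\ge\|m\odot q\|_{\max}-\|j\|_{\max}$ and the identity $|e^{-2\pi i \langle m\odot q,a+ib\rangle}|=e^{2\pi\langle m\odot q,b\rangle}$, the series is bounded by a constant multiple of $\sum_m e^{\|m\odot q\|_{\max}(-C_2+2\pi\|b\|_{\max})}$, which converges uniformly on compacta of $\mathcal{S}_{C_2/(2\pi)}$ by the same counting argument used in the proof of Lemma~\ref{Lem:relationship analiticitydecay}\ref{decayFourierimpliesanaliticity}. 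Analyticity in that strip then follows from uniform convergence together with termwise holomorphy.

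\textbf{Part~\ref{eq:uhattou}.} I would begin with the Floquet inversion formula
\begin{equation*}
u(n)=\frac{1}{|\mathbb{T}^d_*|}\int_{\mathbb{T}^d_*}\hat{u}_{\ell(n)}(x)\,e^{2\pi i \langle n-\ell(n),x\rangle}\,dx,
\end{equation*}
which is a consequence of Lemma~\ref{lem:directint} after writing $n=\ell(n)+m\odot q$. For each coordinate $x_k$, I would deform the integration contour from $[0,1/q_k)$ to the parallel segment at height $i\,\mathrm{sign}(n_k-\ell(n)_k)(C_2-\epsilon)$. The $\Gamma^*$-periodicity of $\hat{u}_{\ell(n)}$ (together with the integer-valuedness of $m_k=(n_k-\ell(n)_k)/q_k$, which makes the full integrand periodic in $x_k$ with period $1/q_k$) cancels the vertical sides of the resulting rectangle, and analyticity on $\mathcal{S}_{C_2}$ legitimizes each one-dimensional deformation. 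The shifted integrand acquires the factor $e^{-2\pi(C_2-\epsilon)\sum_k|n_k-\ell(n)_k|}$; bounding $|\hat{u}_{\ell(n)}|$ pointwise by the uniform quantity $\max_{j\in W}\|\hat{u}_j\|_{L^\infty(\mathcal{S}_{C_2-\epsilon})}$ and using $\sum_k|n_k-\ell(n)_k|\ge\|n-\ell(n)\|_{\max}$ delivers the claimed estimate, with $C_3=C_3(d,q)$ absorbing $|\mathbb{T}^d_*|^{-1}$ and purely numerical constants.

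\textbf{Main obstacle.} The main obstacle is the several-variable contour shift in part~\ref{eq:uhattou}: the one-dimensional deformations must be carried out successively in each coordinate while the remaining ones stay complex, and at each step one must verify that the partial integral converges absolutely along the shifted contour and that the periodicity-based cancellation of vertical sides continues to apply to the \emph{full} integrand $\hat{u}_{\ell(n)}(x)e^{2\pi i \langle n-\ell(n),x\rangle}$, not merely to $\hat{u}_{\ell(n)}$. This is a routine application of Cauchy's theorem together with Fubini's theorem, enabled by the analyticity and boundedness hypothesis on $\mathcal{S}_{C_2}$, but it is the only step where the geometry of $\mathbb{T}^d_*$ (rather than the standard torus $\mathbb{T}^d$) genuinely enters the argument, and where the normalization $n-\ell(n)\in\Gamma$ is indispensable.
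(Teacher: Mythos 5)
Your proposal is correct and follows the approach the paper implicitly intends: the corollary is stated without proof immediately after Lemma~\ref{Lem:relationship analiticitydecay}, and your reduction (termwise bound of the Floquet series for part~\ref{eq:utouhat}; contour shift of the Floquet inversion integral using $\Gamma^*$-periodicity of the full integrand for part~\ref{eq:uhattou}) is precisely how that lemma transfers to the Floquet setting. The only cosmetic deviation is that in part~\ref{eq:uhattou} you shift all $d$ coordinates and then relax via $\sum_k|n_k-\ell(n)_k|\ge\|n-\ell(n)\|_{\max}$, whereas the lemma's own proof shifts only the coordinate realizing the max norm; both variants are valid and yield the stated estimate.
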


 \subsection{Proofs of Theorems \ref{thm1:interiordecay}-\ref{thm3:resolventdecay} \ref{Ithm3:resolventdecay}}
	
 We now introduce a calculation which will be the starting point for the proofs of Theorems \ref{thm1:interiordecay}-\ref{thm3:resolventdecay}. Below we denote by $\tilde{M}(x,\lambda)$ the  $Q\times Q$ matrix corresponding to the operator $\tilde{H}_0(x)-\lambda I$, by $\tilde{B}(x,\lambda)$ its adjoint and define
  \begin{equation}
 \tilde{P}(x,\lambda):=\det \tilde{M}(x,\lambda).
  \end{equation}
\begin{lem}\label{Lem:fractionform} Suppose that there exists $\lambda\in \sigma_{p}(H)$ and let $u\in \ell^2(\Z^d)\setminus\{0\}$ be an associated eigenfunction:  $$-\Delta u+Vu+vu=\lambda u.$$
Then, defining $\psi=-vu$ we have that for almost every $x\in \mathbb{T}^d_*$  \begin{equation}\label{eq:haturepres}
		\hat{u}(x,\cdot)=\frac{\tilde{B}(x,\lambda)\hat{\psi}(x,\cdot)}{\tilde{P} (x,\lambda)},\,\,\, \text{in}\,\,\mathcal{H}_q.
		\end{equation}

\end{lem}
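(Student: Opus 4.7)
The plan is to apply the Floquet transform of Lemma \ref{lem:directint} to the eigenvalue equation rewritten in the inhomogeneous form $(H_0-\lambda)u = \psi$ with $\psi = -vu$, and then to use the classical adjugate formula $\tilde{B}(x,\lambda)\tilde{M}(x,\lambda) = \tilde{P}(x,\lambda)\,I$ fiberwise.

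First I would observe that $\psi \in \ell^2(\mathbb{Z}^d)$: indeed, $v$ is bounded (it decays) and $u \in \ell^2(\mathbb{Z}^d)$, so $\psi = -vu \in \ell^2(\mathbb{Z}^d)$. Rewriting the eigenvalue equation as $(H_0 - \lambda I)u = \psi$ and applying $\mathcal{F}_q$ to both sides, Lemma \ref{lem:directint} gives
\begin{equation*}
\bigl(\tilde{H}_0(x) - \lambda I\bigr)\hat{u}(x,\cdot) = \hat{\psi}(x,\cdot) \quad \text{in } \mathbb{C}^W, \text{ for a.e. } x \in \mathbb{T}^d_*,
\end{equation*}
or, using the notation from the statement, $\tilde{M}(x,\lambda)\hat{u}(x,\cdot) = \hat{\psi}(x,\cdot)$.

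Next, I would multiply both sides on the left by the adjugate matrix $\tilde{B}(x,\lambda)$ and use the Cramer-type identity $\tilde{B}(x,\lambda)\tilde{M}(x,\lambda) = \tilde{P}(x,\lambda)\,I$, obtaining
\begin{equation*}
\tilde{P}(x,\lambda)\,\hat{u}(x,\cdot) = \tilde{B}(x,\lambda)\,\hat{\psi}(x,\cdot) \quad \text{for a.e. } x \in \mathbb{T}^d_*.
\end{equation*}
To conclude \eqref{eq:haturepres} I need to divide by $\tilde{P}(x,\lambda)$, so the key point is that $\tilde{P}(\cdot,\lambda)$ does not vanish on a set of positive measure. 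By the explicit form of $D^x$ and $B_V$ in Lemma \ref{prop:floquetTransf}, $\tilde{P}(x,\lambda)$ is, after the substitution $z_j = e^{2\pi i x_j}$, a Laurent polynomial in $(z_1,\ldots,z_d)$; its zero set is therefore either all of $\mathbb{T}^d_*$ or a real-analytic subset of zero Lebesgue measure. The first alternative would force $\lambda$ to be an eigenvalue of every fiber $\tilde{H}_0(x)$, i.e.~a flat band of $H_0$, which cannot occur for the discrete periodic Schr\"odinger operator $-\Delta+V$ on $\mathbb{Z}^d$ (equivalently, the characteristic polynomial $\tilde{P}(x,\lambda)$ depends non-trivially on $x$).

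The only potential obstacle is this last absence-of-flat-bands step, but it is a standard fact about $H_0$ and does not require new arguments; the rest of the proof is a direct transcription of the eigenvalue equation through the Floquet decomposition together with elementary linear algebra.
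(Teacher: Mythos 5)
Your proposal is correct and follows essentially the same route as the paper: pass to the inhomogeneous equation $(H_0-\lambda)u=\psi$, apply the Floquet transform fiberwise, and use the adjugate/Cramer identity, with the final step resting on the fact that $\{x:\tilde P(x,\lambda)=0\}$ has measure zero in $\mathbb{T}^d_*$. The only (minor) difference is that you make explicit the underlying requirement that $\tilde P(\cdot,\lambda)\not\equiv 0$, i.e.\ absence of flat bands for $H_0=-\Delta+V$ (a Thomas-type argument: as $z_1\to\infty$ the diagonal dominates and $\tilde P\sim (-z_1)^Q\prod_{n\in W}\mu_n^1\neq 0$), whereas the paper uses this implicitly when asserting the real zero set has dimension at most $d-1$ via the Trotman stratification theorem.
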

\begin{proof}

With the given definitions, from the eigenfunction equation we have that \begin{equation}\label{geigen1}
		(H_0-\lambda I)u=\psi.
		\end{equation}
		Applying $\mathcal{F}_q$ to both sides of  \eqref{geigen1}, using Lemma \ref{lem:directint} and denoting $\mathcal{F}_q u=\hat{u}$ we find that for each $x\in \mathbb{T}^d_*$ 
\begin{equation}\label{eq:contradicstart}
(\tilde{H}_0(x)-\lambda I)\hat{u}(x,\cdot)=\hat{\psi}(x,\cdot)
		\end{equation}
  with both vectors above viewed as elements of $\mathcal{H}_q$.
  
  By Cramer's rule, if $x\in \mathbb{T}^d_* $ is such that $\lambda \notin \sigma (\tilde{H}_0(x))$ then
		\begin{equation*}
		(\tilde{H}_0(x)-\lambda I)^{-1}=\frac{\tilde{B}(x,\lambda)}{\tilde{P} (x,\lambda)}.
		\end{equation*}

		It follows from \eqref{eq:contradicstart} that whenever $\det \tilde{M}(x,\lambda)\neq 0$ 
		\begin{equation}
		\hat{u}(x,\cdot)=\frac{\tilde{B}(x,\lambda)\hat{\psi}(x,\cdot)}{\tilde{P} (x,\lambda)},\,\,\, \text{in}\,\,\mathcal{H}_q.
		\end{equation}
  In particular, the equality \eqref{eq:haturepres} holds true for almost every $x\in \mathbb{T}^d_*$ since the analytic set 
  \begin{equation}\label{eq:realzeroset}
  \{x\in \mathbb{R}^d:\,\,\tilde{P}(x,\lambda)=0\}
  \end{equation}
  has dimension at most $d-1$,  see \cite[Theorem 1.2.10]{Trotman}.
  \end{proof}

 The following result will be useful in order to check the first assumption of Theorem \ref{Kuch-Vainb theorem}. Before stating it, we recall the definition of the Fermi variety.\par
\begin{definition}Let $z\in \mathbb{C}$, $q=(q_1,\ldots,q_d)$ and denote by $\mathcal{H}(z,q)$ the space of those $\psi :\mathbb{Z}^d \to \mathbb{C}$ for which
\begin{equation}
\psi(n+j\odot q) = z^j \psi(n), \quad \forall \,n,j\in \mathbb{Z}^d.
\end{equation}
The (complex) Fermi variety of $H_0=-\Delta+V$ at $\lambda\in \mathbb{C}$ is 
\begin{equation} \label{eq:blochvarDef}
F_{\lambda}(V)= \{k \in \mathbb{C}^{d} : H_0\psi = \lambda \psi \text{ enjoys a nonzero solution in } \mathcal{H}(e^{2\pi i k},q)\},
\end{equation}
where $e^{2\pi i k} = (e^{2\pi i k_1},\ldots,e^{2\pi i k_d}).$
    
\end{definition} 

  \begin{lem}\label{lem:dimensioncheck}
       Define $$D(\lambda):=\{x\in \mathbb{C}^d:\,\,{\tilde P}(x,\lambda)=0\}$$ and let $Z_j$ be any of its irreducible components. Assume that  $\lambda\in (a_m,b_m)\cap \sigma_{p}(H)$ for some $m\in\{1,\ldots,Q\}$. Then $Z_j\cap \mathbb{R}^d$ is non-empty and has Hausdorff dimension $d-1$.
  \end{lem}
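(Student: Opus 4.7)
The plan is to combine Floquet-theoretic information about the real Fermi surface with the irreducibility theorem for Fermi varieties of discrete periodic Schr\"odinger operators from \cite{LiuPreprint:Irreducibility}.

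First I would establish that the real slice $D(\lambda)\cap\mathbb{R}^d$ is non-empty and has Hausdorff dimension exactly $d-1$. Non-emptiness is immediate: since $\lambda$ lies in the range $[a_m,b_m]$ of the continuous band function $\lambda_m$, there exists $x^0\in[0,1)^d$ with $\lambda_m(x^0)=\lambda$, and hence $\tilde P(x^0,\lambda)=0$. For the upper bound $\dim_H(D(\lambda)\cap\mathbb{R}^d)\leq d-1$ I would use that $\tilde P(\cdot,\lambda)$ is a nonzero real analytic function on $\mathbb{R}^d$, so its zero set is a proper real analytic subvariety of dimension at most $d-1$. For the lower bound I would use that $\lambda$ lies \emph{strictly} between $a_m$ and $b_m$: the open sets $\{\lambda_m<\lambda\}$ and $\{\lambda_m>\lambda\}$ are both non-empty, and $\{\lambda_m=\lambda\}\subset D(\lambda)\cap\mathbb{R}^d$ separates them in $\mathbb{T}^d$, which by a classical separation result in dimension theory forces the topological (hence Hausdorff) dimension to be at least $d-1$.

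Next I would transfer the irreducibility theorem from \cite{LiuPreprint:Irreducibility} — which states that $\tilde P(x,\lambda)$ is irreducible when viewed as a Laurent polynomial in $z_j=e^{2\pi i x_j}$, equivalently that the Fermi variety in $(\mathbb{C}^*)^d$ is irreducible — into the following structural fact about the analytic set $D(\lambda)\subset\mathbb{C}^d$: if $Z_0$ is a fixed irreducible component of $D(\lambda)$, then every irreducible component is of the form $Z_0+n$ for some $n\in\mathbb{Z}^d$. In one direction, the $\mathbb{Z}^d$-periodicity of $\tilde P$ guarantees that each translate $Z_0+n$ is an irreducible component. In the other direction, under the exponential covering $\mathbb{C}^d\to(\mathbb{C}^*)^d$ any irreducible component $Z$ must project onto a dense subset of the single irreducible component of the Fermi variety; hence $Z\subseteq\bigcup_{n\in\mathbb{Z}^d}(Z_0+n)$, and the irreducibility of $Z$ forces it to coincide with exactly one translate.

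With these two ingredients in hand the conclusion is straightforward. Writing $D(\lambda)\cap\mathbb{R}^d=\bigcup_{j\in I}(Z_j\cap\mathbb{R}^d)$ as a countable union and using the first step, at least one summand, say $Z_{j_0}\cap\mathbb{R}^d$, has Hausdorff dimension $d-1$. Since $\mathbb{Z}^d\subset\mathbb{R}^d$, integer translations preserve both $\mathbb{R}^d$ and the Hausdorff dimension of real intersections, so every $Z_j\cap\mathbb{R}^d=(Z_{j_0}+n)\cap\mathbb{R}^d$ is non-empty and $(d-1)$-dimensional. I expect the second step to be the main technical obstacle: while Liu's result is clean at the level of Laurent polynomials, accurately tracking the irreducible components of $D(\lambda)$ in $\mathbb{C}^d$ under $\exp$ requires some careful bookkeeping, in particular verifying that the resulting $\mathbb{Z}^d$-action on the set of components is well-defined and transitive.
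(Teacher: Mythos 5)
Your strategy for the generic case --- a direct separation argument for $\dim_H(D(\lambda)\cap\mathbb{R}^d)\geq d-1$, combined with transitivity of the $\mathbb{Z}^d$-action on irreducible components via the exponential covering --- is a legitimate alternative to the paper's route, which instead invokes \cite[Lemma 10]{KuVa00} applied to the irreducible Fermi variety. Your version avoids that black box at the cost of the covering-space bookkeeping you flag; the correct way to fill the ``hence'' jump is by a path-lifting argument: $\reg(F_\lambda)$ is connected since $F_\lambda$ is irreducible, the restriction of $\exp$ to the preimage of $\reg(F_\lambda)$ is a regular covering with deck group $\mathbb{Z}^d$, and a regular covering group always acts transitively on the connected components of the preimage of a path-connected set. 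That part is salvageable.

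However, there are two genuine gaps corresponding to cases where the irreducibility input you are relying on simply does not hold, and the paper must (and does) treat them separately.

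First, the case $d=1$. There, $D(\lambda)$ is a discrete set of points and Liu's irreducibility theorems do not apply (and would not help: a one-variable Laurent polynomial factors into linear pieces). The paper instead uses standard one-dimensional Floquet theory to conclude that for $\lambda$ in the interior of a band, $D(\lambda)\subset\mathbb{R}$, so that every component $Z_j$ is already a real point. Your separation argument does still give non-emptiness of $D(\lambda)\cap\mathbb{R}$, but it gives information only about the union, not about each individual $Z_j$, and with no transitivity argument available you cannot upgrade it. Second, the case $d=2$ and $\lambda=[V]$ (the average of $V$). Here \cite[Theorems 1.1, 1.2]{LiuPreprint:Irreducibility} do not apply and $F_\lambda(V)$ may genuinely be reducible, so the transitivity of the $\mathbb{Z}^d$-action on components of $D(\lambda)$ --- which your argument crucially needs --- is not available. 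The paper instead observes via \cite[Remark 4]{LiuPreprint:Irreducibility} that in this case $F_\lambda(V)$ coincides with $F_0(\mathbf{0})$, the Fermi variety of the free Laplacian, and finishes with a direct computation for that explicit variety. As written, your proof would claim a conclusion in this case that it has no means of establishing.
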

  \begin{proof} When $d\geq 3$, (respectively $d=2$ and $\lambda \neq [V]$) by \cite[Theorem 1.1]{LiuPreprint:Irreducibility} (respectively \cite[Theorem 1.2]{LiuPreprint:Irreducibility}) the Fermi variety $F_{\lambda}(V)$ is irreducible. Therefore, in the above cases, the result of \cite[Lemma 10]{KuVa00} applies and yields that
  $F_{\lambda}(V)\cap \mathbb{R}^d$ has Hausdorff dimension $d-1$. Since $(k,\lambda)\in F_{\lambda}(V)$ if and only if $k_j=\frac{x_j}{q_j}$ for some $x=(x_1,\ldots,x_d)\in D(\lambda)$, the desired result then follows. When $d=2$ and $\lambda= [V]$, $F_{\lambda}(V)$ may possibly be reducible so the above argument does not apply. However, in this case, by \cite[Remark 4]{LiuPreprint:Irreducibility}, $F_{\lambda}(V)=F_{0}(\bf{0})$ and the conclusion follows from a direct calculation.
When $d=1$, by standard Floquet theory (e.g. \cite[Theorem 1.14.]{Kuchment2016BAMS}), $D(\lambda)\subset \R$. Thus $Z_j\cap \mathbb{R}$ is non-empty.

  \end{proof}
 
 	\begin{proof}[\bf Proof of Theorem \ref{thm1:interiordecay}]

	Assume, for the sake of contradiction that, $\sigma_{p}(H)\cap \cup^{Q}_{m=1}(a_m,b_m)\neq \emptyset$ and take $\lambda\in (a_m,b_m)\cap \sigma_{p}(H)$ for some $m\in\{1,\ldots,Q\}$ with a corresponding eigenfunction $u\in \ell^2(\mathbb{Z}^d)$. By Lemma \ref{lem:dimensioncheck}, $\zeta(x)=\tilde{P} (x,\lambda)$  satisfies the dimension condition in \ref{condition1} of Theorem \ref{Kuch-Vainb theorem}.
		Since, by definition of $\mathcal{F}_q$, $\hat{u}\in L^2\left(\mathbb{T}^d_*,\mathbb{C}^W; \frac{dx}{|\mathbb{T}^d_*|}\right)$ we have that for any fixed $l\in W$ the map $x\mapsto \hat{u}(x,l)$ is an element of $L^2(\mathbb{T}^d_*)$. Moreover, this map is $\Gamma^\ast$ periodic by definition thus it may be viewed as a function $\hat{u}(\cdot,l)\in L^2_{\mathrm{loc}}(\mathbb{R}^d)$. Therefore, by Theorem \ref{Kuch-Vainb theorem} \ref{condition1} and \eqref{eq:haturepres}, one has that for each $l\in W$, $x\mapsto \hat{u}(x,l)$ is an entire function. It then follows from Lemma \ref{Lem:relationship analiticitydecay} that for any $\beta>0$ there exists $C_{\beta}$ with
  $$\abs{u(n)}\leq C_{\beta}e^{-\beta\abs{n}}$$
  which in turn contradicts the weak form of unique continuation in \cite[P.49]{bk13} or \cite{LyMa18}.
  \end{proof}

  \begin{proof}[\bf Proof of Theorems \ref{thm2:endpointdecay} and \ref{thm3:resolventdecay} \ref{Ithm3:resolventdecay}]
  We again start from \eqref{eq:haturepres}. Note that in Theorem \ref{thm2:endpointdecay} we allow $\lambda$ to take extreme values within each band of $\sigma(H_0)$, namely $\lambda=a_m$ and $\lambda=b_m$ for some $m\in\{1,\ldots,Q\}$. In this case the assumption \ref{condition1} in Theorem \ref{Kuch-Vainb theorem} is not necessarily valid, so we must proceed differently from the proof of Theorem \ref{thm1:interiordecay} above. Nonetheless, due to the assumption that
  $$\limsup_{\abs{n}\to \infty}\frac{\ln\abs{u(n)}}{\abs{n}}<-r,$$
  where $r=0$ for Theorem \ref{thm2:endpointdecay} and, more generally,
  \begin{equation}\label{eq:defdistance}
r=r(\lambda):=2\pi\mathrm{dist}\left(\mathbb{R}^d,D(\lambda)\right)
  \end{equation}
  with
  \begin{equation}\label{eq:dispersion} D(\lambda):=\{x\in \mathbb{C}^d:\,\,{\tilde P}(x,\lambda)=0\}
  \end{equation}
  for Theorem \ref{thm3:resolventdecay},
  we find by Lemma \ref{Lem:relationship analiticitydecay} that there exists $\varepsilon>\mathrm{dist}\left(\mathbb{R}^d,D(\lambda)\right)$ such that for each $l\in W$, the map $x\mapsto \hat{u}(x,l)$ is analytic in a strip $$\mathcal{S}_{\varepsilon}:=\{z\in \mathbb{C}:\,\,\max_{j=1,\ldots,\,d}\abs{\mathrm{Im}\,z_j}<\varepsilon\}.$$ In both cases condition \ref{condition2} of Theorem \ref{Kuch-Vainb theorem} holds with $\Omega'=\mathcal{S}_{\varepsilon}$ and $\Omega=\mathbb{C}^d$. Thus for each $l\in W$, $x\mapsto \hat{u}(x,l)$ is an entire function and once more this contradicts the weak form of unique continuation in \cite[P.49]{bk13} or \cite{LyMa18}.

  \end{proof}


  \section{Decay rate asymptotics: Proofs of Theorem \ref{thm3:resolventdecay} \ref{asympthmii} and \ref{prop:optimal}}
  \label{Sec:proofscontinued}
\subsection{Proof of Theorem \ref{thm3:resolventdecay} \ref{asympthmii}}
In view of Lemma \ref{prop:floquetTransf} in order to estimate $r(\lambda)$ given by \eqref{eq:defdistance} we consider the equation

  \begin{equation}
    \det(D^x+B-\lambda I)=0
  \end{equation}

with $D^x$ and $B$ given by Lemma \ref{prop:floquetTransf}. In particular, we recall that $B$ is independent of $\lambda$ and $$D^x(n,n')
=-\sum^d_{j=1}\left(\mu^{j}_{n} e^{2\pi i x_j}+\frac{1}{\mu^{j}_{n}e^{2\pi i x_j}}\right)\delta_{nn'}.$$ 
Denote by $\norm{B}$ the operator norm of $B$. 
An intermediate step for the proof of Theorem \ref{thm3:resolventdecay} \ref{asympthmii} which connects the desired decay rate to that of the discrete Laplacian is given by the Lemma below, which is essentially the Gershgorin circle theorem.
\begin{lem}\label{Lem:boundsolperiodiccase}
Let $\lambda\in \mathbb{C}$ and $x=x(\lambda)=(x_1(\lambda),\ldots,x_d(\lambda))$ be a solution of
\begin{equation}\label{eq:dispersion2}
    \det(D^x+B-\lambda I)=0.
  \end{equation} With $D^x$ and $B$ given by Lemma \ref{prop:floquetTransf}.
    There exists $n\in W$ such that
    \begin{equation}\label{eq:dispersionassymp}
    \abs{D^x(n,n)-\lambda}\leq\norm{B}.
    \end{equation}
\end{lem}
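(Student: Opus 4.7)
The plan is to apply a Gershgorin-type perturbation argument to the $Q\times Q$ matrix $D^x+B$. The hypothesis $\det(D^x+B-\lambda I)=0$ means precisely that $\lambda$ is an eigenvalue, so there exists a nonzero vector $\psi\in\mathbb{C}^W$ with $(D^x+B-\lambda I)\psi=0$. Rearranging and using the diagonal structure of $D^x$ from \eqref{D}, this becomes
\begin{equation*}
(D^x-\lambda I)\psi=-B\psi.
\end{equation*}

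I would conclude by contradiction: suppose $|D^x(n,n)-\lambda|>\|B\|$ for every $n\in W$. Since $D^x-\lambda I$ is then a diagonal matrix with all diagonal entries of modulus exceeding $\|B\|$, it is invertible and its inverse (which is again diagonal) has operator norm
\begin{equation*}
\|(D^x-\lambda I)^{-1}\|=\max_{n\in W}|D^x(n,n)-\lambda|^{-1}<\|B\|^{-1}.
\end{equation*}
Inverting the identity above gives $\psi=-(D^x-\lambda I)^{-1}B\psi$, so
\begin{equation*}
\|\psi\|\leq\|(D^x-\lambda I)^{-1}\|\,\|B\|\,\|\psi\|<\|\psi\|,
\end{equation*}
which contradicts $\psi\neq 0$. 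Therefore there must exist $n\in W$ with $|D^x(n,n)-\lambda|\leq\|B\|$, as claimed.

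There is no serious obstacle here; the argument is the short textbook derivation of the Gershgorin disc inclusion, adapted to the splitting $\tilde{H}_0(x)-\lambda I \cong (D^x-\lambda I) + B$ in which the diagonal part $D^x$ carries the entire $x$-dependence while $B$ is a fixed bounded perturbation independent of both $x$ and $\lambda$. The only convention to fix is that $\|B\|$ denotes the $\ell^2$ operator norm on $\mathbb{C}^W$, so that $\|B\psi\|\leq \|B\|\,\|\psi\|$ closes the estimate cleanly; this is compatible with how the bound will be used in the asymptotic analysis of $r(\lambda)$, since $\|B\|$ is a finite constant depending only on $V$ and $q$.
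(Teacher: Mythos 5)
Your proof is correct and takes essentially the same route as the paper's: both argue by contradiction that if $|D^x(n,n)-\lambda|>\|B\|$ for every $n\in W$ then the diagonal part dominates and $D^x+B-\lambda I$ must be invertible, via the same estimate $\|(D^x-\lambda I)^{-1}B\|<1$. The only cosmetic difference is that you phrase the contradiction through an explicit eigenvector $\psi$ while the paper factors $A+B=A(I+A^{-1}B)$ and invokes invertibility of the Neumann-type factor; the paper also dispatches the trivial case $B\equiv 0$ up front, which your write-up handles implicitly (the chain $\|\psi\|\le\|(D^x-\lambda I)^{-1}\|\,\|B\|\,\|\psi\|<\|\psi\|$ still closes when $\|B\|=0$, though the intermediate bound $<\|B\|^{-1}$ is then vacuous).
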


\begin{proof}
If $B\equiv 0$ the result is immediate so assume $B$ has at least one nonzero entry.
Assume for the sake of contradiction that $$\abs{D^x(n,n)-\lambda}>\norm{B}$$ for all $n\in W$. In this case
denoting $A=D^x-\lambda I$ we see that $A$ is a diagonal matrix which is invertible whenever $\norm{B}\neq 0$. Moreover
$$A+B=A(I+A^{-1}B)$$
where
$$\norm{A^{-1}B}\leq \frac{\norm{B}}{\min_{n\in W}{\abs{D^x(n,n)-\lambda}}}<1.$$
But in this case $(I+A^{-1}B)$ is invertible and so is $A+B$ which contradicts \eqref{eq:dispersion2}.

\end{proof}

\begin{lem}\label{Lem:freecaseassymp}
    Let 
    $$
    g(x_1,\ldots, x_d) = 2\sum_{j=1}^d \cos(2\pi x_j)
    $$
    and $$g^{-1}(\lambda)=\{x\in \mathbb{C}^d:\,\,g(z)=\lambda\}.$$
    Then 
    \begin{equation}\label{What we want to show...1}
    \lim_{|\lambda| \to \infty} \frac{2\pi \mathrm{dist}(g^{-1}(\lambda),\mathbb{R}^d)} {\ln|\lambda|} = 1.
    \end{equation}

\end{lem}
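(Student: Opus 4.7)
The plan is a two-sided asymptotic comparison bounding $\mathrm{dist}(g^{-1}(\lambda),\mathbb{R}^d)$ above and below by quantities of order $\frac{\ln|\lambda|}{2\pi}$.

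For the lower bound, I would use the elementary identity $|\cos(2\pi(a+ib))|^2=\cos^2(2\pi a)+\sinh^2(2\pi b)$ for $a,b\in\mathbb{R}$, which immediately yields the pointwise estimate $|\cos(2\pi z)|\leq \cosh(2\pi|\mathrm{Im}\,z|)$ for $z\in\mathbb{C}$. Applied coordinatewise to an arbitrary $x=a+ib\in g^{-1}(\lambda)$ this gives
\[
|\lambda|=|g(x)|\leq 2\sum_{j=1}^d \cosh(2\pi|b_j|)\leq 2d\,\cosh\!\left(2\pi\|b\|_\infty\right),
\]
so $\|b\|_\infty\geq \frac{1}{2\pi}\cosh^{-1}(|\lambda|/(2d))$. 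Since the Euclidean distance from $x$ to $\mathbb{R}^d$ equals $\|b\|_2\geq \|b\|_\infty$ and since $\cosh^{-1}(t)=\ln t+O(1)$ as $t\to\infty$, dividing through by $\ln|\lambda|$ and passing to the limit yields $\liminf\geq 1$ in \eqref{What we want to show...1}.

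For the matching upper bound, I would exhibit explicit solutions that concentrate the entire imaginary part in a single coordinate. Setting $x_2=\cdots=x_d=0$ reduces $g(x)=\lambda$ to the scalar equation $\cos(2\pi x_1)=\mu_\lambda$ with $\mu_\lambda:=(\lambda-2(d-1))/2$. Writing $w=e^{2\pi i x_1}$ turns this into the quadratic $w^2-2\mu_\lambda w+1=0$, whose two roots multiply to $1$; hence one of them satisfies $|w|\geq 1$. Taking $x_1=\frac{\arg w}{2\pi}-i\frac{\ln|w|}{2\pi}$ produces a solution $x\in g^{-1}(\lambda)$ with $\mathrm{dist}(x,\mathbb{R}^d)=\ln|w|/(2\pi)$. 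The quadratic formula gives $|w|\leq |\mu_\lambda|+\sqrt{|\mu_\lambda|^2+1}=O(|\lambda|)$ as $|\lambda|\to\infty$, so $\limsup\leq 1$ in \eqref{What we want to show...1}, and combining the two bounds proves the lemma.

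The argument is essentially routine and no step presents a genuine obstacle. The only conceptual point worth flagging is that optimality is achieved by placing the \emph{entire} imaginary burden on a single coordinate rather than spreading it across several; this is forced because it is the $\ell^\infty$-size of $\mathrm{Im}\,x$ (not its $\ell^1$-size) that controls the growth of $|g(x)|$ via the bound $2d\cosh(2\pi\|b\|_\infty)$. Matching the Euclidean distance in the statement is then automatic: the extremal construction has only one nonzero imaginary coordinate, so the $\ell^\infty$- and $\ell^2$-norms of $\mathrm{Im}\,x$ coincide there, while the lower bound was phrased to accommodate $\|b\|_2\geq \|b\|_\infty$.
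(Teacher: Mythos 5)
Your proof is correct and follows essentially the same two-sided strategy as the paper: a pointwise growth bound on $|g|$ along horizontal slices to get the lower bound, and an explicit one-variable Joukowski/quadratic construction (all imaginary part concentrated in $x_1$) to get the upper bound. The only cosmetic differences are that you use the sharper $\cosh$ estimate in place of the paper's $e^{2\pi|\mathrm{Im}\,x_j|}$ and set $x_2=\cdots=x_d=0$ (absorbing a bounded additive shift) where the paper sets them to $\tfrac14$ to make the extra cosines vanish exactly.
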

\begin{proof}
Notice that if $x=(x_1,\ldots,x_d)$
\begin{eqnarray*}
|g(x)|\leq 2\sum_{j = 1}^d e^{2\pi |\mathrm{Im}\,x_j|}.
\end{eqnarray*}
Thus, when $g(x_1,\ldots, x_d) = \lambda$, we have that 
\begin{equation}\label{Estimate on the max of the yjs}
\max_{1 \leq j \leq d}|\mathrm{Im}\,x_j| \geq (2\pi)^{-1}\ln{|\lambda| \over 2d}
\end{equation}
hence
\begin{equation}\label{the lower bound}
\mathrm{dist}(g^{-1}(\lambda),\mathbb{R}^d)=\min_{g(z) = \lambda} \sqrt{\sum_{j=1}^d |\mathrm{Im}\,z_j|^2} \geq (2\pi)^{-1}\ln{|\lambda| \over 2d}.
\end{equation}
In order to obtain an upper bound, consider the equation 
\begin{equation}\label{there is a solution with all coordinates the same}
    g(x_1, \frac{1}{4}, \ldots, \frac{1}{4}) = 2\cos(2\pi x_1) = \lambda
\end{equation}
and for $d = 1$, consider similarly $g(z) = \lambda$.
If $0 \leq x_1 < 1$ we consider the solution of
\begin{eqnarray*}
     2\cos(2\pi x_1) = {\lambda} 
\end{eqnarray*}
such that
\begin{eqnarray}
  \mathrm{Im}\,x_1 = (-2\pi)^{-1}\ln\left| {{\lambda }  + \sqrt{{\lambda^2 } - 4} \over 2}\right|. \label{the first solution we look at}
\end{eqnarray}
It readily follows that
\begin{equation}\label{The upper bound}
\limsup_{|\lambda|\to \infty}\frac{\min_{g(z) = \lambda} \sum_{j=1}^d \sqrt{|\mathrm{Im}\,x_j|^2}}{\ln |\lambda|} \leq  \frac{1}{2\pi}.
\end{equation}
\eqref{What we want to show...1} is now readily implied by \eqref{the lower bound} and \eqref{The upper bound}.
\end{proof}

Combining Lemmas \ref{Lem:boundsolperiodiccase} and \ref{Lem:freecaseassymp} we have

\begin{cor}
Let $r(\lambda)$ be given by \eqref{eq:defdistance}. Then
\begin{equation}\label{asympperiz}
\lim_{|\lambda|\to \infty}\frac{ r(\lambda)}{\ln|\lambda|}=1.
\end{equation}
\end{cor}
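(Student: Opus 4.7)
My plan is to establish the two bounds $\liminf_{|\lambda|\to\infty} r(\lambda)/\ln|\lambda| \geq 1$ and $\limsup_{|\lambda|\to\infty} r(\lambda)/\ln|\lambda| \leq 1$ separately. The lower bound follows by directly combining Lemmas \ref{Lem:boundsolperiodiccase} and \ref{Lem:freecaseassymp}, while the upper bound requires an explicit construction of elements of $D(\lambda)$ close to $\mathbb{R}^d$.

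For the lower bound, I would take $x \in D(\lambda)$ arbitrary and apply Lemma \ref{Lem:boundsolperiodiccase} to obtain $n \in W$ with $|D^x(n,n) - \lambda| \leq \|B\|$. Setting $y_j := x_j + n_j/q_j$ for $j=1,\ldots,d$, a short computation using the definition of $D^x(n,n)$ gives $D^x(n,n) = -g(y)$ with $g$ as in Lemma \ref{Lem:freecaseassymp}, so that $|g(y)+\lambda| \leq \|B\|$ and in particular $|g(y)| \geq |\lambda|-\|B\|$. Since the shift from $x$ to $y$ is by a real vector, $\mathrm{Im}\,y = \mathrm{Im}\,x$, hence $\mathrm{dist}(x,\mathbb{R}^d) \geq \max_j |\mathrm{Im}\,y_j|$. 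Invoking the pointwise estimate \eqref{Estimate on the max of the yjs} derived inside the proof of Lemma \ref{Lem:freecaseassymp} then yields $\mathrm{dist}(x,\mathbb{R}^d) \geq (2\pi)^{-1}\ln((|\lambda|-\|B\|)/(2d))$ uniformly in $x \in D(\lambda)$, and the $\liminf$ inequality follows at once.

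For the upper bound, I would fix $x_j = 1/4$ for $j = 2, \ldots, d$ and view $\phi(x_1) := \det(D^x + B - \lambda I)$ as a function of $x_1 \in \mathbb{C}$. Setting $z_1 := e^{2\pi i x_1}$ and multiplying each row of $D^x + B - \lambda I$ by $z_1$ to clear $z_1^{-1}$-denominators, one obtains $\tilde\phi(z_1) := z_1^Q \phi$, a genuine polynomial in $z_1$ of degree exactly $2Q$. Expanding the determinant over permutations, each coefficient $c_m(\lambda)$ of $z_1^m$ in $\tilde\phi$ is itself a polynomial in $\lambda$ with $\deg_\lambda c_m \leq \min(m, 2Q-m)$, and equality is achieved at the three vertices $m\in\{0,Q,2Q\}$; in particular $c_0, c_{2Q}$ are nonzero constants (independent of $\lambda$) and $c_Q(\lambda) = (-1)^Q \lambda^Q + O(\lambda^{Q-1})$, with the leading $\lambda^Q$ contribution coming exclusively from the identity permutation. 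After the rescaling $z_1 = w\lambda$ and division by $\lambda^{2Q}$, the polynomials $\lambda^{-2Q}\tilde\phi(w\lambda)$ converge, uniformly on compact subsets of $\mathbb{C}$, to $w^Q q(w)$ where $q$ has degree $Q$ and $q(0)\neq 0$. A Rouché argument on compacta of $\mathbb{C}\setminus\{0\}$ then produces, for every sufficiently large $|\lambda|$, a root $z_1^*(\lambda)$ of $\tilde\phi$ with $|z_1^*|/|\lambda|$ bounded above and below by fixed positive constants. The corresponding point $x^* := (x_1^*, 1/4, \ldots, 1/4)$ lies in $D(\lambda)$ and satisfies $\mathrm{dist}(x^*,\mathbb{R}^d) = |\mathrm{Im}\,x_1^*| = (2\pi)^{-1}\ln|z_1^*| \leq (2\pi)^{-1}\ln|\lambda| + O(1)$, giving $r(\lambda) \leq \ln|\lambda| + O(1)$ and hence the $\limsup$ inequality.

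The main obstacle I anticipate lies in the upper bound: verifying that the middle coefficient $c_Q(\lambda)$ of $\tilde\phi$ really has leading term $(-1)^Q \lambda^Q$, despite the presence of the off-diagonal perturbation $B$. Concretely, one must rule out that any non-identity permutation in the determinantal expansion contributes a term of $\lambda$-degree $Q$ to $c_Q$ that could cancel the identity-permutation contribution. Since each non-identity permutation with $k$ fixed points replaces $Q-k$ diagonal factors of $\lambda$-degree $1$ by off-diagonal factors of $\lambda$-degree $0$, its contribution to $c_Q$ has $\lambda$-degree at most $k < Q$, so no cancellation can occur. Once this structural fact is in place, the Newton polygon / Rouché step becomes routine and delivers the desired root of $\tilde\phi$.
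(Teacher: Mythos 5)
Your proposal is correct, and the lower bound is exactly the argument the paper intends: Lemma \ref{Lem:boundsolperiodiccase} reduces membership in $D(\lambda)$ to the estimate $|g(y)|\geq|\lambda|-\|B\|$ for a real translate $y$ of $x$, and then \eqref{Estimate on the max of the yjs} from the proof of Lemma \ref{Lem:freecaseassymp} gives $r(\lambda)\geq \ln\bigl((|\lambda|-\|B\|)/(2d)\bigr)$. Where your write-up goes beyond the paper is the upper bound. The paper disposes of the whole corollary with the single sentence ``Combining Lemmas \ref{Lem:boundsolperiodiccase} and \ref{Lem:freecaseassymp} we have,'' but those lemmas, as stated, only control the distance from below in the periodic case (Lemma \ref{Lem:freecaseassymp} handles the upper bound only for the free dispersion $g$, i.e., $B=0$). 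To get a point of $D(\lambda)$ at distance $\lesssim (2\pi)^{-1}\ln|\lambda|$ from $\mathbb{R}^d$ when $B\neq 0$ one genuinely needs a perturbative statement about the roots of $z_1^Q\det(D^x+B-\lambda I)$, and your Newton-polygon/Rouch\'e argument is precisely the right way to supply it. The structural point you flag at the end — that non-identity permutations contribute to $c_Q$ with $\lambda$-degree at most $Q-2$, so the identity permutation's $(-1)^Q\lambda^Q$ cannot be cancelled — is the crux and you verify it correctly; together with $c_0,c_{2Q}\neq 0$ it forces $Q$ roots of size $\sim|\lambda|$ and $Q$ of size $\sim|\lambda|^{-1}$, which is exactly what the rescaling $z_1=w\lambda$ and Hurwitz/Rouch\'e extract. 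In short: your proof is sound, and it makes explicit a step that the paper leaves to the reader; it is not a different method so much as a complete version of the one the authors gesture at.
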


  \subsection{Asymptotic optimality of the bounds:~proof of Theorem \ref{thm3:resolventdecay}\ref{prop:optimal}}
Recall that we denote $H_0=-\Delta+V$.
   We now discuss the construction of solutions $u:\mathbb{Z}^d\rightarrow \mathbb{C}$ of the equation 
  \begin{equation}\label{eq:eigen}
  H_0u+vu=\lambda u
  \end{equation}
under the assumption that
  \begin{equation}\label{eq:decayv}
  \abs{v(n)}\leq C_{\alpha}e^{-\alpha\abs{n}}\,\,\text{for all}\,\, \alpha>0\,\, \text{and}\,\,n\in \mathbb{Z}^d.
  \end{equation}
  Letting $\gamma(\lambda)=\mathrm{dist}(\lambda,\sigma(H_0))$ these solutions are shown to have the property that
  \begin{equation}\label{eq:decayu}
  \lim_{n\to \infty}\frac{\ln \abs{u(n)}}{\abs{n}}<\mu\,\,\text{for any}\,\,\mu<\mu_0(\lambda,d):=\ln \gamma(\lambda) -\ln(2d).
  \end{equation}

\begin{prop}\label{prop:example}
 There exists a set $\mathcal{E}\subset \mathbb{R}$ of Lebesgue measure zero such that for all $\lambda \in \mathbb{C}\setminus(\mathcal{E}\cup \sigma(H_0))$ there are $v=v_{\lambda}:\mathbb{Z}^d\rightarrow \mathbb{C}$ and $u=u_{\lambda}:\mathbb{Z}^d\rightarrow \mathbb{C}$ satisfying \eqref{eq:decayv} and \eqref{eq:decayu}, respectively. Moreover, if $\lambda \in \mathbb{R}\setminus(\mathcal{E}\cup \sigma(H_0))$ then $v_{\lambda}$ is real-valued.

\end{prop}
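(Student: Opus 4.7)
The plan is to build $u_\lambda$ and $v_\lambda$ by a resolvent-based construction centered at the origin. Set $u_\lambda := (H_0-\lambda)^{-1}\delta_0 \in \ell^2(\mathbb{Z}^d)$, which is well defined since $\lambda \notin \sigma(H_0)$, and define $v_\lambda$ as the function supported at the origin with $v_\lambda(0) := -1/u_\lambda(0)$. Whenever $u_\lambda(0)\neq 0$, this gives $v_\lambda u_\lambda = -\delta_0 = -(H_0-\lambda)u_\lambda$, so that $H_0 u_\lambda + v_\lambda u_\lambda = \lambda u_\lambda$, verifying \eqref{eq:eigen}. Moreover $v_\lambda$ has compact support, hence trivially has $\beta$-exponential decay for every $\beta>0$, so \eqref{eq:decayv} holds. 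When $\lambda \in \mathbb{R}\setminus\sigma(H_0)$, self-adjointness of $H_0$ and the reality of its matrix elements force $u_\lambda$ to be real-valued, and thus $v_\lambda$ is real.

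To identify the exceptional set $\mathcal{E}$, consider the map $\lambda \mapsto u_\lambda(0) = \langle \delta_0,(H_0-\lambda)^{-1}\delta_0\rangle = \int_{\mathbb{R}}(E-\lambda)^{-1}\,d\mu_0(E)$, where $\mu_0$ is the spectral measure of $\delta_0$ with respect to $H_0$. This is the Borel--Stieltjes transform of $\mu_0$ and is analytic on $\mathbb{C}\setminus\sigma(H_0)$. For $\lambda\in\mathbb{C}\setminus\mathbb{R}$, the integrand has imaginary part of constant sign equal to that of $\mathrm{Im}\,\lambda$, so $u_\lambda(0)\neq 0$. For $\lambda\in \mathbb{R}\setminus\sigma(H_0)$, $u_\lambda(0)$ is real-analytic and satisfies $u_\lambda(0)\sim -1/\lambda$ as $|\lambda|\to\infty$, hence is not identically zero on any connected component of $\mathbb{R}\setminus\sigma(H_0)$; its zero set in $\mathbb{R}$ is therefore discrete, and I take $\mathcal{E}$ to be this set, which has Lebesgue measure zero.

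The final ingredient is the decay rate \eqref{eq:decayu}, which I plan to establish via a Combes--Thomas argument. For $\eta\in\mathbb{R}^d$, let $M_{\eta}$ denote the multiplication operator by $n \mapsto e^{\eta\cdot n}$ and set $H_0^\eta := M_{\eta}\,H_0\,M_{-\eta}$. Since $V$ commutes with $M_\eta$, only the Laplacian is affected, and a direct Fourier multiplier computation gives $\|H_0^\eta-H_0\|_{\ell^2\to\ell^2} = 2\sum_{j=1}^{d}\sinh|\eta_j|$. For the orthant of $\mathbb{Z}^d$ defined by signs $\sigma \in \{\pm 1\}^d$, choose $\eta=(c\sigma_1,\ldots,c\sigma_d)$ with $2d\sinh c < \gamma(\lambda)$; then $H_0^\eta-\lambda$ is invertible with norm bounded by $(\gamma(\lambda)-2d\sinh c)^{-1}$. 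From $M_{\eta}(H_0-\lambda)^{-1}M_{-\eta}=(H_0^\eta-\lambda)^{-1}$ and $M_\eta\delta_0 = \delta_0$ one obtains that $n \mapsto e^{\eta\cdot n}u_\lambda(n)$ equals $(H_0^\eta - \lambda)^{-1}\delta_0$ and hence lies in $\ell^2(\mathbb{Z}^d)$. Pointwise this yields $|u_\lambda(n)| \leq (\gamma(\lambda)-2d\sinh c)^{-1} e^{-c\|n\|_1}$ for $n$ in the given orthant; patching across orthants gives $|u_\lambda(n)| \leq C e^{-c|n|}$ throughout $\mathbb{Z}^d$. Letting $c\uparrow \mathrm{arcsinh}(\gamma(\lambda)/(2d))$ and using the elementary bound $\mathrm{arcsinh}(x)=\ln(x+\sqrt{x^2+1})>\ln x$ produces decay at every rate smaller than $\ln(\gamma(\lambda)/(2d)) = \mu_0(\lambda,d)$, matching \eqref{eq:decayu}.

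The main technical point is the decay estimate: while the Combes--Thomas scheme is standard, one must carefully compute the operator norm of $H_0^\eta-H_0$ as a translation-invariant Fourier multiplier, patch the resulting bounds uniformly across the $2^d$ orthants, and check that the achievable rate $\mathrm{arcsinh}(\gamma(\lambda)/(2d))$ dominates the target $\ln\gamma(\lambda)-\ln(2d)$. The construction of the pair $(u_\lambda, v_\lambda)$ and the measure-zero property of $\mathcal{E}$ are, by contrast, essentially formal.
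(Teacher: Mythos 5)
Your construction is essentially the same as the paper's: take $u_\lambda=(H_0-\lambda)^{-1}\delta_0$, put $v_\lambda=-u_\lambda(0)^{-1}\delta_0$, verify the eigenfunction equation, and get the decay rate from a Combes--Thomas conjugation (the paper cites \cite[Theorem 10.5]{A-W-B} rather than re-deriving the estimate, but the content is identical, up to your slightly sharper $\mathrm{arcsinh}$ bound which is in fact $\geq \ln(\gamma/(2d))$ as you note). The real-valuedness of $v_\lambda$ for real $\lambda$ and the non-vanishing of $G_0(0,0;\lambda)$ for $\mathrm{Im}\,\lambda\neq 0$ are also handled in the same spirit.

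The one place your argument differs from the paper is the measure-zero property of $\mathcal{E}=\{\lambda\in\mathbb{R}\setminus\sigma(H_0):G_0(0,0;\lambda)=0\}$. The paper applies de la Vall\'ee-Poussin's theorem to the Herglotz function $-1/G_0(0,0;z)$: the boundary values exist and are finite a.e., which forces $\mathcal{E}$ to be null. You instead argue that $G_0$ is real-analytic on $\mathbb{R}\setminus\sigma(H_0)$ and not identically zero, so its zero set is discrete, hence countable, hence null. This is a perfectly good and somewhat more elementary route. However, there is a small logical gap in your phrasing: the asymptotic $G_0(0,0;\lambda)\sim -1/\lambda$ as $|\lambda|\to\infty$ only directly rules out vanishing on the two unbounded components of $\mathbb{R}\setminus\sigma(H_0)$, not on bounded spectral gaps. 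This is easily repaired: either observe that $\mathbb{C}\setminus\sigma(H_0)$ is connected, so by the identity theorem $G_0$ cannot vanish on any real interval without being identically zero (which the asymptotic excludes); or observe directly that $G_0'(\lambda)=\int_{\mathbb{R}}(E-\lambda)^{-2}\,d\mu_0(E)>0$ on every gap, so $G_0$ is strictly monotone there and has at most one zero per gap. Either fix makes the discreteness argument complete; with that patch your proof is correct.
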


\begin{remark}
    In particular, this implies that Theorem \ref{thm3:resolventdecay} is asymptotically sharp in the sense that by taking $\lambda$ sufficiently large one may find solutions to \eqref{eq:eigen} which  decay exponentially with $|n|$ at a rate which approaches $\ln |\lambda|$ as $|\lambda|\to \infty$ along the real axis.
\end{remark}

\begin{proof}[Proof of Proposition \ref{prop:example}]
Throughout this proof we will make use of a few standard facts and notation, which we now recall. Given $\lambda \notin \sigma(H_0)$ we write $\gamma(\lambda):=\mathrm{dist}(\lambda,\sigma(H_0))>0$ and denote by 
  \begin{equation}\label{eq:Greendfn}
  G_0(m,n,\lambda):=\langle \delta_{m},(H_0-\lambda I)^{-1}\delta_n\rangle
  \end{equation} the Green's function of $H_0$ at $\lambda.$ The functions $\{\delta_l\}_{l\in \mathbb{Z}^d}$ are defined as
  $\delta_l(n)=\delta_{nl}$ for all $n\in \mathbb{Z}^d$, where $\delta_{nl}$ is the Kronecker delta.
  The Combes-Thomas bound \cite[Theorem 10.5]{A-W-B}, when specialized to our context, implies that
  
  \begin{equation}\label{eq:CT}
  \abs{G_0(m,n,\lambda)}\leq \frac{1}{\gamma(\lambda)-S_{\mu}}e^{-\mu\abs{m-n}}
  \end{equation} for all $\mu$ such that $S_{\mu}:=2de^{\mu}<\gamma(\lambda)$. That is, equation \eqref{eq:CT} holds whenever $\mu<\ln(\gamma(\lambda))-\ln(2d)$.
  Finally, we write for any $\lambda \notin \sigma(H_0)$
  \begin{equation}\label{eq:defresolventu}
  u=u_{\lambda}:=(H_0-\lambda)^{-1}\delta_0.
  \end{equation}


Let 
\begin{equation}\label{glast}
   \mathcal{E}=\{\lambda\in \R\setminus\sigma(H_0):  G_0(0,0;\lambda)=0\}.
\end{equation}

We are going to prove that $\mathcal{E}$ has Lebesgue measure zero. For that purpose, we make use of Herglotz theory. By \eqref{eq:symmetry}, the map 
  \begin{equation}\label{eq:Herglotzmap}
  z\mapsto G_0(z):= G_0(0,0;z)
  \end{equation}
   defines a Herglotz-Nevalinna function. Namely:
   \begin{enumerate}[label=(\roman*)]
       \item $\mathrm{Im}\,G_0(z) >0$ whenever $\mathrm{Im(z)}>0$.
       \item $G_0:\mathbb{C}^{+} \rightarrow \mathbb{C}^{+}$ is analytic.
   \end{enumerate}

   Given any Herglotz function $F:\mathbb{C}^{+}\rightarrow \mathbb{C}^{+}$, for Lebesgue almost every $\mathrm{Re}\lambda \in \mathbb{R}$ the boundary value $F(\mathrm{Re}\lambda+i0):=\lim_{\mathrm{Im}\lambda\to 0^{+}} F(\lambda)$ exists and is finite by a theorem of de la Vallé-Poussin, see \cite[Corollary 3.29]{GTES}. In particular, applying this fact to the Herglotz function  $F(z)=-\frac{1}{G_0(0,0,z)}$  and noting that $G_0(0,0,z)$ is a continuous function for $z$ in $\C\setminus \sigma(H_0)$, we conclude that $\mathcal{E}$ has Lebesgue measure zero.

   Since $H_0$ is self-adjoint,   one has that
  \begin{equation}\label{eq:symmetry}
  \mathrm{Im}(G_0(0,0;\lambda))=2\mathrm{Im(\lambda)}\abs{(H_0-\bar{\lambda})^{-1}\delta_0}^{2}. 
  \end{equation}
  Therefore,  $u(0)=G_0(0,0;\lambda)\neq 0$ for any $\lambda\in \mathbb{C}\setminus \mathbb{R}$.
  By \eqref{glast} and \eqref{eq:symmetry}, the function 
      $v=-\frac{1}{G_0(0,0;\lambda)}\delta_0$
       is well-defined for all $\lambda\in \mathbb{C}\setminus (\mathcal{E}\cup \sigma(H_0))$ and we have that 
      \begin{equation}
        \abs{v(n)}\leq \frac{1}{\abs{u_\lambda(0)}}e^{-\beta \abs{n}}\,\,\text{for any}\,\,\beta>0.  
      \end{equation}
 It is immediate to check that
  \begin{equation}\label{eq:perturbedzeq}
  -\Delta u+Vu+vu=\lambda u.
  \end{equation}
  Moreover, the Combes-Thomas estimate ensures that \begin{equation}\abs{u(n)}\leq \frac{1}{\gamma(\lambda)-S_{\mu}}e^{-\mu\abs{n}},\,\,\,\text{for any}\,\, \mu< \ln(\gamma(z))-\ln(2d).
  \end{equation}
  
 Finally, 
 by \eqref{eq:symmetry},
 for $\mathrm{Re}\lambda \in \mathbb{R}\setminus(\sigma(H_0)\cup \mathcal{E})$ we have that $\mathrm{Im} G(\mathrm{Re}\lambda+i0)=0$ and thus the function $v_{\lambda}$ is real-valued.

\end{proof}



\appendix\label{The appendix}

\section{Auxiliary results in Complex analysis and Proof of Lemma \ref{Lem:vanishing}}\label{sec:proofovanishing}

\subsection{Real-analytic coordinate changes}

We will need real-analytic versions of the inverse function theorem and related change of coordinate theorems. As these results are well-known to experts and their proofs closely follow their holomorphic counterparts we only provide the statements.


\begin{lem}\label{rearranging jacobian lemma}
Suppose that $U \subset \mathbb{R}^d$ is a neighborhood of a point $p$ such that $u:U \to \mathbb{R}^d$ is a real-analytic map with an invertible Jacobian matrix at $p$.  Then there is a neighborhood $V \subset \mathbb{C}^d$ of $p$ and an injective holomorphic map $f:V \to \mathbb{C}^d$ such that $f|_{U \cap V} = u$.
\end{lem}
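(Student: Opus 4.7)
The statement is essentially the real-analytic version of the fact that real-analytic functions admit local holomorphic extensions, combined with the holomorphic inverse function theorem. My plan is to construct the extension $f$ coordinate-by-coordinate and then use the Jacobian hypothesis to guarantee injectivity after shrinking the domain.

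First, I would write $u = (u_1, \ldots, u_d)$ where each $u_j : U \to \mathbb{R}$ is real-analytic. By definition of real-analyticity, for each $j$ there is an open polydisc $V_j \subset \mathbb{C}^d$ centered at $p$ in which the Taylor series of $u_j$ at $p$ converges absolutely; this defines a holomorphic function $f_j : V_j \to \mathbb{C}$ extending $u_j|_{U \cap V_j}$. Setting $V_0 = \bigcap_{j=1}^d V_j$, which is an open neighborhood of $p$ in $\mathbb{C}^d$, and $f = (f_1, \ldots, f_d) : V_0 \to \mathbb{C}^d$ gives a holomorphic map with $f|_{U \cap V_0} = u$. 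Because each $f_j$ is holomorphic and agrees with $u_j$ on $U \cap V_0$, the complex Jacobian matrix
\[
J_f(p) = \left( \frac{\partial f_j}{\partial z_k}(p) \right)_{j,k=1}^d
\]
coincides entry-wise with the real Jacobian $J_u(p)$ (holomorphic partial derivatives restricted to the real axis give real partial derivatives of the restriction). Hence $J_f(p)$ is invertible by hypothesis.

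Next I would apply the holomorphic inverse function theorem to $f$ at $p$: since $J_f(p)$ is invertible, there exists an open neighborhood $V \subset V_0$ of $p$ such that $f|_V : V \to f(V)$ is biholomorphic. In particular $f|_V$ is injective and still satisfies $f|_{U \cap V} = u$. This $V$ and $f$ are then the desired data.

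I do not anticipate a significant obstacle: the extension step is an elementary convergence argument (one uses that real Taylor coefficients bounding behavior on a real interval extends to bounds on a polydisc by the standard real-analyticity estimates), and the injectivity step is a direct citation of the holomorphic inverse function theorem. The only delicate point worth double-checking is the identification of $J_f(p)$ with $J_u(p)$, which rests on the Cauchy--Riemann equations together with the fact that holomorphic partial derivatives of an extension of a real-analytic function agree along the real slice; this is a routine computation from the power series representation.
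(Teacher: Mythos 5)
Your proof is correct, and since the paper deliberately omits the proof of this lemma (remarking only that it is ``well-known to experts''), there is no paper argument to compare against; your route — extend each component $u_j$ to a holomorphic $f_j$ via its convergent Taylor series on a complex polydisc, observe that the complex Jacobian $J_f(p)$ equals the real Jacobian $J_u(p)$ entrywise (so a real matrix invertible over $\mathbb{R}$ has $\det J_f(p)\neq 0$ over $\mathbb{C}$), and then invoke the holomorphic inverse function theorem to shrink $V$ and gain injectivity — is precisely the standard one the authors have in mind.
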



\begin{lem}(Real-analytic inverse function theorem)\label{Real-analytic inverse function theorem} Suppose $u:\mathbb{R}^d \to \mathbb{R}^d$ is a real-analytic map such that at a point $p \in \mathbb{R}^d$, the Jacobian determinant $J_{\mathbb{R}}u(p) \neq 0$.  Then there are neighborhoods $U$ of $p$ and $V$ of $F(p)$ such that $u:U \to V$ is bijective and $u^{-1}|_V$ is real-analytic.
    
\end{lem}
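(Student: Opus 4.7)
The plan is to reduce the real-analytic statement to the holomorphic inverse function theorem by exploiting the complexification provided by Lemma \ref{rearranging jacobian lemma}. First I would apply that lemma to obtain a neighborhood $\widetilde{V}\subset \mathbb{C}^d$ of $p$ and an injective holomorphic map $f:\widetilde{V}\to \mathbb{C}^d$ with $f|_{\widetilde{V}\cap \mathbb{R}^d}=u$. Since $f$ is a holomorphic extension of $u$, the Cauchy--Riemann equations force the matrix of holomorphic partials $\bigl(\partial f_i/\partial z_j\bigr)(p)$ to coincide with the real Jacobian matrix $\bigl(\partial u_i/\partial x_j\bigr)(p)$. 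Hence $\det Df(p)=J_{\mathbb{R}}u(p)\neq 0$, and the holomorphic inverse function theorem yields neighborhoods $V'\subset \widetilde{V}$ of $p$ and $W\subset \mathbb{C}^d$ of $f(p)=u(p)$ such that $f:V'\to W$ is a biholomorphism with holomorphic inverse $g:W\to V'$.

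The main subtle point — and the one I expect to be the central obstacle — is to verify that $g$ sends real inputs to real inputs, so that restricting it to $\mathbb{R}^d$ produces a genuine inverse of $u$. To settle this, I would fix $y\in W\cap \mathbb{R}^d$ sufficiently close to $u(p)$ and use the $C^1$ inverse function theorem (whose hypothesis is met because $J_{\mathbb{R}}u(p)\neq 0$) to produce some $x\in V'\cap \mathbb{R}^d$ with $u(x)=y$. Because $f=u$ on real points, $f(x)=y$, and injectivity of $f$ on $V'$ then forces $g(y)=x\in \mathbb{R}^d$. Consequently $g(W\cap \mathbb{R}^d)\subset V'\cap \mathbb{R}^d$.

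To conclude, I would take $U\subset V'\cap \mathbb{R}^d$ to be an open neighborhood of $p$ small enough that $V:=u(U)\subset W\cap \mathbb{R}^d$ is open in $\mathbb{R}^d$ (again by the $C^1$ inverse function theorem). By the previous paragraph, $u:U\to V$ is bijective with inverse $u^{-1}=g|_V$. Since $g$ is holomorphic on a complex neighborhood of each point of $V$ and takes real values on $V$, its restriction $g|_V$ is real-analytic, which gives the desired conclusion.
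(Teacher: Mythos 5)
Your proof is correct, and it follows the complexification route that the paper explicitly has in mind for this lemma (the paper omits the proof with the remark that such results ``closely follow their holomorphic counterparts'' and, indeed, supplies exactly the required extension via Lemma~\ref{rearranging jacobian lemma}). The only place worth tightening is the phrase ``Consequently $g(W\cap\mathbb{R}^d)\subset V'\cap\mathbb{R}^d$'': your argument via the $C^1$ inverse function theorem establishes reality of $g(y)$ only for $y$ in the (possibly smaller) image under $u$ of a real neighborhood of $p$ contained in $V'$, not a priori for every real point of $W$; since you immediately shrink $U$ and $V$ anyway, this is harmless, but the sentence should be stated for the shrunken $V$. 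As an aside, the paper's own treatment of the closely related Lemma~\ref{special real-analytic and biholomorphic isomorphism} instead invokes the Schwarz-reflection symmetry $F(z)=\overline{F(\bar z)}$ (inherited by the inverse) to get reality directly, which avoids any mention of the real $C^1$ theorem; either argument is fine here.
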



    

The following lemma follows by repeating \cite[Chap. I Sec. B, Theorem 9]{GuRo65} \textit{mutatis mutandi} for the specific case of a codimension one submanifold and real-analytic replacing instances of holomorphic or complex.  
\begin{lem}\label{change of coordinates for real-analytic manifold lemma}
Suppose $d > 1$. Let $M = \{x \in \mathbb{R}^d: f(x) = 0\}$ be a $(d-1)$-dimensional real-analytic manifold containing a point $p$.  There is a bijective, real-analytic map $F:U \to V$  between neighborhoods $U, V \subset \mathbb{R}^d$ containing $p$ and 0 respectively such that $F(p) = 0$ and 
$$
F(M \cap U) = \{(z^{\prime}, z_d) \in V: z_d = 0\}.
$$
\end{lem}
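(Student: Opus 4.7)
The plan is to reduce the lemma to the real-analytic inverse function theorem (Lemma \ref{Real-analytic inverse function theorem}). After translating coordinates so that $p = 0$, the main task is to produce a real-analytic function $g$ defined in a neighborhood of $0$ in $\mathbb{R}^d$ with $g(0) = 0$, $\nabla g(0) \neq 0$, and $\{g = 0\}$ coinciding with $M$ locally. The function $f$ appearing in the statement is not directly usable for this: a priori $f$ could vanish to higher order along $M$, in which case $\nabla f(0)$ would be zero. The main technical step is therefore the construction of $g$ from the manifold hypothesis, independently of $f$.

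To build $g$, I would invoke the assumption that $M$ is a $(d-1)$-dimensional real-analytic submanifold of $\mathbb{R}^d$ near $0$. This yields a local real-analytic parameterization $\gamma : B \to \mathbb{R}^d$ of $M$, defined on a ball $B \subset \mathbb{R}^{d-1}$ around $0$, with $\gamma(0) = 0$ and $d\gamma(0)$ injective. Pick a vector $v \in \mathbb{R}^d$ transverse to the tangent space $T_0 M = \operatorname{Im}(d\gamma(0))$ and set
$$
\Phi : B \times (-\epsilon, \epsilon) \to \mathbb{R}^d, \qquad \Phi(t, s) = \gamma(t) + s v.
$$
The Jacobian of $\Phi$ at $(0,0)$ is invertible by construction, so Lemma \ref{Real-analytic inverse function theorem} produces a real-analytic inverse $\Phi^{-1}$ in a neighborhood of $0$. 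Let $g$ denote the last component of $\Phi^{-1}$. Then $g$ is real-analytic, $g(0) = 0$, and $\partial_v g(0) = 1$ so $\nabla g(0) \neq 0$. Locally $M = \{g = 0\}$: the inclusion $\{g = 0\} \subseteq M$ holds because $g$ vanishes precisely on $\Phi(B \times \{0\}) = \gamma(B)$, and the reverse inclusion uses that $\gamma$ locally parameterizes $M$, so any $y \in M$ near $0$ equals $\gamma(t) = \Phi(t, 0)$ for some $t$ and hence satisfies $g(y) = 0$.

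After a permutation of the coordinates of $\mathbb{R}^d$ we may assume $\partial_{x_d} g(0) \neq 0$. Define
$$
F(x_1, \ldots, x_d) = (x_1, \ldots, x_{d-1}, g(x_1, \ldots, x_d)).
$$
Then $F$ is real-analytic near $0$, $F(0) = 0$, and a direct computation gives $\det J_{\mathbb{R}} F(0) = \partial_{x_d} g(0) \neq 0$. A second application of Lemma \ref{Real-analytic inverse function theorem} produces neighborhoods $U$ of $0$ and $V = F(U)$ on which $F$ restricts to a real-analytic diffeomorphism. The identity
$$
F(M \cap U) = \{(z^{\prime}, z_d) \in V : z_d = 0\}
$$
is then immediate from $M \cap U = \{g = 0\} \cap U$ together with the definition of $F$.

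The principal obstacle is the construction of the defining function $g$ with non-vanishing gradient; the remaining steps are routine applications of the inverse function theorem. This mirrors the strategy used for the holomorphic analogue in \cite[Chap.~I, Sec.~B, Theorem 9]{GuRo65}, the only change being that one works with real-analytic maps and the real Jacobian throughout, invoking Lemma \ref{Real-analytic inverse function theorem} in place of its holomorphic counterpart.
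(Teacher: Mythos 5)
Your proof is correct and follows the same inverse-function-theorem strategy as the Gunning--Rossi argument \cite[Chap.~I Sec.~B, Theorem 9]{GuRo65} to which the paper delegates this lemma (the paper itself gives no details beyond that citation). Your observation that $f$ may have vanishing gradient on $M$, so that a fresh defining function $g$ must be built from a local parameterization of $M$ rather than taken to be $f$, is the right way to read the hypotheses and is consistent with how Lemma \ref{change of coordinates for real-analytic manifold lemma} is actually used later in the appendix.
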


Lemmas \ref{rearranging jacobian lemma} and \ref{change of coordinates for real-analytic manifold lemma} together give
\begin{lem}\label{special real-analytic and biholomorphic isomorphism}
Suppose $d > 1$. Let $M \subset \mathbb{R}^d$ be a $(d - 1)$-dimensional real-analytic manifold containing $p$.  For all $\epsilon > 0$ sufficiently small, there is a neighborhood $U \subset \mathbb{C}^d$ containing $p$, a neighborhood $V = (-\epsilon, \epsilon)^d \oplus i(-\epsilon, \epsilon)^d$ and a biholomorphism $F: U \to V$ such that
$$
F(p) = 0, \quad F(U \cap M) = \{(z^{\prime}, z_d):\, z^{\prime} \in \mathbb{R}^{d-1},\, z_d = 0\} \cap V, \quad F(U \cap \mathbb{R}^d) = V \cap \mathbb{R}^d.
$$
Moreover, there are constants $c_1$ and $c_2$ such that $0 < c_1 \leq |\det J_\mathbb{C}F|^2 \leq c_2 < \infty$, where $J_{\mathbb{C}}F$ denotes the Jacobian matrix of $F$.
\end{lem}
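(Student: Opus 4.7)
The strategy is exactly as suggested by the preamble to the statement: combine Lemma \ref{change of coordinates for real-analytic manifold lemma} with Lemma \ref{rearranging jacobian lemma}. First, apply Lemma \ref{change of coordinates for real-analytic manifold lemma} to the $(d-1)$-dimensional real-analytic manifold $M$ at the point $p$: this yields real neighborhoods $U_0 \ni p$ and $V_0 \ni 0$ in $\mathbb{R}^d$ together with a real-analytic bijection $u: U_0 \to V_0$ satisfying $u(p) = 0$ and $u(M \cap U_0) = V_0 \cap \{z_d = 0\}$. Because $u^{-1}$ is also real-analytic, the real Jacobian determinant $J_{\mathbb{R}} u(p)$ is nonzero. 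Lemma \ref{rearranging jacobian lemma} then produces a complex neighborhood $W \subset \mathbb{C}^d$ of $p$ and an injective holomorphic extension $\tilde{F}: W \to \mathbb{C}^d$ with $\tilde{F}|_{W \cap U_0} = u$. Shrinking $W$ if necessary, we may assume that $\tilde{F}$ is a biholomorphism from $W$ onto the open set $\tilde{F}(W) \subset \mathbb{C}^d$, which contains $\tilde{F}(p) = 0$.

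Next, choose $\epsilon > 0$ small enough that the box $V := (-\epsilon,\epsilon)^d \oplus i(-\epsilon,\epsilon)^d$ is contained in $\tilde{F}(W)$, and small enough that $V \cap \mathbb{R}^d \subset V_0$; set $U := \tilde{F}^{-1}(V)$ and $F := \tilde{F}|_U$. Verification of the geometric identities is then direct. The condition $F(p)=0$ is immediate. For the real subspace identity, $F$ agrees with the $\mathbb{R}^d$-valued map $u$ on $U \cap \mathbb{R}^d \subset U_0$, hence $F(U \cap \mathbb{R}^d) \subset V \cap \mathbb{R}^d$; conversely, for any $w \in V \cap \mathbb{R}^d \subset V_0$ the holomorphic map $\tilde{F}^{-1}$ restricted to $V_0 \cap \mathbb{R}^d$ coincides with the real-analytic map $u^{-1}$ (since both are the inverse of $\tilde{F}|_{U_0} = u$ on this set), so $\tilde{F}^{-1}(w) \in \mathbb{R}^d$. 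The hyperplane identity then follows: a point of $U \cap M$ is real, lies in $M \cap U_0$, and maps under $F=u$ into $V \cap \{z_d = 0\}$; conversely, any $w \in V$ with $w_d = 0$ is real, so $\tilde{F}^{-1}(w) \in U \cap \mathbb{R}^d \subset U_0$, and the defining property of $u$ then forces $\tilde{F}^{-1}(w) \in M$.

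Finally, for the Jacobian bounds, $|\det J_{\mathbb{C}} F|^2$ is continuous and strictly positive on $U$, and by shrinking $\epsilon$ further we may assume $\overline{U}$ is a compact subset of $W$; on this compact set the function attains a positive minimum $c_1$ and a finite maximum $c_2$. No single step should be genuinely hard; the only real care needed is the bookkeeping in the shrinking argument — namely, ensuring that $\epsilon$ is chosen small enough simultaneously so that $V \subset \tilde{F}(W)$, $V \cap \mathbb{R}^d \subset V_0$, and $\overline{U}$ is relatively compact in $W$. All three conditions are preserved when $\epsilon$ is decreased, so they can be achieved at once.
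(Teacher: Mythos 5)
Your proposal follows the same two-lemma decomposition as the paper's proof (apply Lemma~\ref{change of coordinates for real-analytic manifold lemma} to get the real-analytic straightening $u:U_0\to V_0$, then complexify via Lemma~\ref{rearranging jacobian lemma}), and the verifications of $F(p)=0$ and the hyperplane identity proceed in the same way. Where the two arguments diverge is in proving the inclusion $V\cap\mathbb{R}^d\subset F(U\cap\mathbb{R}^d)$. The paper establishes it by Schwarz reflection: since $F$ is $\mathbb{R}^d$-valued on $U\cap\mathbb{R}^d$, the identity theorem forces $F(z)=\overline{F(\bar z)}$ on a conjugation-symmetric $U$, and injectivity then shows that no $q\notin\mathbb{R}^d$ can map into $\mathbb{R}^d$. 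You instead argue directly that $\tilde F^{-1}(w)=u^{-1}(w)\in\mathbb{R}^d$ for $w\in V\cap\mathbb{R}^d$. This is fine in spirit, but the bookkeeping condition you impose, namely $V\cap\mathbb{R}^d\subset V_0$, is slightly too weak to make the argument run: to conclude $\tilde F^{-1}(w)=u^{-1}(w)$ via injectivity of $\tilde F$ you need $u^{-1}(w)$ to actually lie in $W$, i.e.\ $w\in u(W\cap U_0)$, so the correct requirement is $V\cap\mathbb{R}^d\subset u(W\cap U_0)$. Likewise, in the forward direction you tacitly use $U\cap\mathbb{R}^d\subset U_0$, which requires first shrinking $W$ so that $W\cap\mathbb{R}^d\subset U_0$. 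Both are harmless: $u(W\cap U_0)$ is an open neighborhood of $0$ in $\mathbb{R}^d$, so the strengthened condition is still achievable for all sufficiently small $\epsilon$, and shrinking $W$ is free. With those two bookkeeping adjustments your proof is complete and avoids the reflection principle, at the cost of having to track the domain of $u$ more carefully; the paper's reflection argument trades that care for the need to keep $U$ conjugation-symmetric.
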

\begin{proof}
    By Lemma \ref{change of coordinates for real-analytic manifold lemma}, there is a real-analytic isomorphism $F^{\prime}:U^{\prime} \to V^{\prime}$ between neighborhoods $U^{\prime}$, $V^{\prime} \subset \mathbb{R}^d$ containing $p$ and 0 respectively such that
    $$
    F^{\prime}(p) = 0 \quad F^{\prime}(U^{\prime} \cap M) = \{(z^{\prime}, z_d) \in V^{\prime}:\, z_d = 0\}.
    $$
    By Lemma \ref{rearranging jacobian lemma}, there is a biholomorphic map between neighborhoods $U, V \subset \mathbb{C}^d$ containing $p$ and 0 respectively such that $F|_{U \cap U^{\prime}} = F^{\prime}|_{U \cap U^{\prime}}$.  Hence $F(p) = 0$.  By shrinking $U$ to a sufficiently small ball, we may suppose that 
    $
    U \cap \mathbb{R}^d \subset U^{\prime},
    $
    and the constants $c_1$ and $c_2$ exist.
    Since $F$ agrees with $F^{\prime}$ on their common domain, $F(U \cap \mathbb{R}^d) \subset F(U) \cap \mathbb{R}^d$. Since $F(z) = \overline{F}(\bar{z})$ on $U \cap \mathbb{R}^d$, the equality holds on $U$ as well.  If for some $q \in U\setminus\mathbb{R}^d$, it was the case that $F(q) \subset \mathbb{R}^d$, then $F(q) = F(\bar{q})$, violating that $F$ is bijective. Thus, 
    $$
    F(U \cap \mathbb{R}^d) = F(U) \cap \mathbb{R}^d = V \cap \mathbb{R}^d.
    $$
    It now follows that $F(U \cap M) = \{(z^{\prime}, z_d):\, z^{\prime} \in \mathbb{R}^{d-1},\, z_d = 0\} \cap V$. By shrinking $V$, and consequently $U = F^{-1}(V)$ may no longer be a ball, $V = (-\epsilon, \epsilon)^d \oplus i(-\epsilon, \epsilon)^d$.  
\end{proof}
\subsection{Lemmas on analytic subsets}
Below $\Omega$ will denote a domain in $\mathbb{C}^d$. Additionally, $z = (z_1, \ldots, z_d) = (z^{\prime}, z_d)$ and $z_j = x_j + iy_j$ for $j = 1,\ldots, d$. The function $J$ will denote the complex structure map on $\mathbb{C}^d$; that is, for each $q \in \mathbb{R}^{2d}$, $J:T_q(\mathbb{R}^{2d}) \to T_{q}(\mathbb{R}^{2d})$ by
$$
J\left({\partial \over \partial x_i}\Big{|}_{q}\right) = {\partial \over \partial y_i}\Big{|}_{q}, \quad J\left({\partial \over \partial y_i}\Big{|}_{q}\right) = -{\partial \over \partial x_i}\Big{|}_{q}.
$$

\begin{lem}\label{lemma with conclusion no open subset}
Suppose $d > 1$. Let $A^{\prime}$ be an analytic subset of $\Omega$ of (complex) dimension less than $d - 1$.  Let $M$ be a real manifold in $\mathbb{R}^d$ of (real) dimension $d - 1$.  Then there is no open subset of $M$ contained in $A^{\prime}$.  
\end{lem}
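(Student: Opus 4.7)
My plan is to proceed by induction on the complex dimension $k := \dim_{\mathbb{C}} A' \leq d - 2$. The base case $k = 0$ is immediate, since a zero-dimensional analytic subset of $\Omega$ is locally finite and therefore cannot contain any open piece of the positive-dimensional manifold $M$.

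For the inductive step, I would assume the statement for all analytic subsets of complex dimension strictly less than $k$, and suppose for contradiction that there exists a non-empty open subset $U$ of $M$ with $U \subset A'$. I would then split into two cases. If $U \cap \reg(A') = \emptyset$ then $U \subset \sng(A')$, which is itself an analytic subset of $\Omega$ of complex dimension strictly less than $k$ by \cite[p.~5.3]{Ch89}; applying the inductive hypothesis to $\sng(A')$ then furnishes the contradiction.

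Otherwise, I pick $p \in U \cap \reg(A')$. Near $p$ the set $A'$ is a complex submanifold of complex dimension $k \leq d-2$, and consequently $T_p A' \subset T_p \mathbb{C}^d$ is a $J$-invariant real subspace of real dimension $2k \leq 2d-4$. Since $U$ is a $(d-1)$-dimensional real submanifold of $\mathbb{R}^d$ passing through $p$ with $U \subset A'$, one has $T_p U \subset T_p A'$. Moreover $T_p U \subset T_p \mathbb{R}^d$, and $J$ sends $T_p \mathbb{R}^d$ into the complementary real subspace $i\, T_p \mathbb{R}^d$; hence $T_p U$ is totally real, i.e., $T_p U \cap J(T_p U) = \{0\}$. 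Using that $T_p A'$ is $J$-invariant, this yields
\begin{equation*}
T_p U + J(T_p U) \subset T_p A',
\end{equation*}
where the left-hand side has real dimension $2(d-1) = 2d - 2$ while the right-hand side has real dimension at most $2d - 4$, a contradiction.

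The main subtlety I anticipate lies in handling points of $U$ at which $A'$ is singular. A naive Hausdorff-dimension count only gives $\dim_H A' \leq 2d - 4$, which exceeds $d-1$ whenever $d \geq 3$; one therefore genuinely needs to exploit the complex-analytic structure through the $J$-invariance of $T_p A'$ at a regular point, after first reducing to the regular case by inducting on the singular stratification.
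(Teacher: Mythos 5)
Your proposal is correct and takes essentially the same approach as the paper's proof: both reduce to a regular point of a complex-analytic stratum by descending through the singular stratification (you by induction on dimension; the paper by choosing a minimal $j$ with $O\cap\reg(\sng^j A')\neq\emptyset$), and both then derive the contradiction from the fact that a $(d-1)$-dimensional totally real submanifold cannot sit inside a complex manifold of complex dimension $\leq d-2$. Your dimension count via $T_pU + J(T_pU)\subset T_pA'$ is just a reformulation of the paper's remark that the maximal totally real subspace of $T_pC_2$ has real dimension $\tfrac{1}{2}\dim_\mathbb{R} T_pC_2 < d-1$.
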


\begin{proof}
    Towards a contradiction, suppose that there is an open subset $O \subset M$ contained in $A^{\prime}$. Define $B_0 = A^{\prime}$ and for $k \geq 1$,
$$
B_k = \hbox{sng}(\cdots(\hbox{sng}(A^{\prime})))
$$
where $\hbox{sng}$ is repeated $k \geq 1$ times.  Notice that
\begin{equation}\label{decomposition of A prime}
A^{\prime} = \sqcup_{k=0}^N \reg(B_k) \sqcup B_{N + 1}, \quad \forall N,
\end{equation}
where $\sqcup$ denotes the disjoint union and some of the sets in the union may be empty.
Recall that if an analytic subset $A$ has dimension $j$, then $\sng(A)$ has dimension at most $j-1$.  Moreover, if $A$ is zero dimensional, then $A = \reg(A)$.  So the (complex) dimension of $B_k$ is less than $d-1$ and there is a minimal number $j$ such that $O \cap \reg(B_j) \neq \emptyset$.  Since $j$ is minimal, by \eqref{decomposition of A prime} $O \subset B_j$.  Let $p \in O \cap \reg(B_j)$.  Both $M$ and $B_j$ have subspace topologies inherited from $\Omega$.  So for all neighborhoods $U$ of $p$ in $\mathbb{C}^n$ sufficiently small, 
\begin{equation}\label{C1}
C_1 := U \cap M \subset O,
\end{equation}
is $(d - 1)$-dimensional, and 
\begin{equation}\label{C2}
C_2 := U \cap B_j
\end{equation}
is a complex manifold of dimension strictly less than $d - 1$.  
Since $T_pC_1 \subset \hbox{span}\{{\partial \over \partial x_i}|_{p}\}_{i=1}^d$ for all $p \in C_1$, $C_1$ is a totally real submanifold; that is 
$$
T_pC_1 \cap JT_pC_1 = \{0\}.
$$
Since $C_2$ is a complex manifold, $T_pC_2 = JT_pC_2$ for all $p \in C_2$.  Since $O \subset B_j$,  by \eqref{C1} and \eqref{C2}, $C_1 \subset C_2$. Let $\iota:C_1 \to C_2$ denote the inclusion map. For any $p$, the maximal (real) dimension of the totally real subspace $\iota_*T_pC_1$ of $T_pC_2$ is $\dim_{\mathbb{R}}(T_pC_2)/2 < d - 1$.  This contradicts that the (real) dimension of $C_1$ is $d - 1$.
\end{proof}

\begin{definition}\label{Definition in the appendix of what the order is and at least order}
A function $f \in C^{k}(\Omega)$ vanishes to order at least $k$ at $z_0 \in \Omega$ if 
$$
{\partial^{\alpha}f \over \partial z^{\alpha}}(z_0) = 0, \quad |\alpha| < k,
$$
and it vanishes to order (exactly) $k$ if moreover there exists $\beta \in \mathbb{N}^d$ with $|\beta| = k$ such that ${\partial^{\beta} f \over \partial z^{\beta}}(z_0) \neq 0$. Additionally, $f$ vanishes to order at least $k$ on a set $S$ if it vanishes to order at least $k$ at all points of the set $S$.
\end{definition}
\begin{lem}\label{lemma involving weierstrass division theorem}
Suppose $d > 1$. Suppose $f$ is holomorphic in a neighborhood $U$ of $0 \in \mathbb{C}^d$ and $f$ vanishes to order at least $k \geq 1$ at all points of $U \cap \left(\mathbb{R}^{d-1}\times \{0\}\right)$.  Then $f(z) = z_d^kq(z)$ where $q$ is a holomorphic function in a neighborhood $V_1 \subset U$.  Moreover, if $f$ vanishes to order (exactly) $k$ at $0\in \mathbb{C}^d$, 
 then $q \neq 0$ in a neighborhood $V_2 \subset V_1$ of 0.
\end{lem}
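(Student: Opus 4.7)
The plan is to combine the identity theorem in several complex variables with a Taylor expansion in the variable $z_d$. First, I would fix an integer $j$ with $0 \leq j < k$ and consider
$$g_j(z') := \frac{\partial^j f}{\partial z_d^j}(z', 0),$$
which is holomorphic in $z'$ in a neighborhood of $0 \in \mathbb{C}^{d-1}$. The hypothesis that $f$ vanishes to order at least $k$ at every point of $U \cap (\mathbb{R}^{d-1} \times \{0\})$ forces $g_j$ to vanish on a relatively open subset of $\mathbb{R}^{d-1}$, and the identity theorem in $d-1$ complex variables then yields $g_j \equiv 0$ in some neighborhood of the origin in $\mathbb{C}^{d-1}$. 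This upgrades the vanishing from the totally real slice $\mathbb{R}^{d-1}\times\{0\}$ to the full complex hyperplane $\{z_d = 0\}$, locally near $0$.

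Next, I would shrink $U$ to a polydisc $P = P' \times P_d$ on which $f$ is given by its convergent Taylor series in $z_d$ with coefficients depending holomorphically on $z'$:
$$f(z',z_d) = \sum_{j \geq 0} a_j(z') z_d^j, \qquad a_j(z') = \frac{1}{j!}\frac{\partial^j f}{\partial z_d^j}(z',0).$$
The previous step gives $a_j \equiv 0$ on $P'$ for $j < k$, so $f(z) = z_d^k q(z)$ with
$$q(z) = \sum_{j \geq 0} a_{j+k}(z') z_d^j.$$
Cauchy estimates applied to the original series of $f$ on $P$ guarantee that this series for $q$ converges on $P$ and defines a holomorphic function there, producing the factorization with $V_1 = P$.

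For the \emph{moreover} claim, I would observe that if $f$ vanishes to order exactly $k$ at $0$, then its Taylor series at $0$ has a nonzero homogeneous part of degree $k$. By the previous step, every monomial $(z')^{\alpha'} z_d^j$ appearing in the Taylor series of $f$ satisfies $j \geq k$, so a monomial of total degree exactly $k$ must have $j = k$ and $|\alpha'| = 0$. Hence the full degree $k$ part of $f$ equals $a_k(0) z_d^k$, which forces $a_k(0) \neq 0$. Since $q(0) = a_k(0)$ and $q$ is continuous, $q$ is nonzero on some neighborhood $V_2 \subset V_1$ of $0$. The only mild technical point is in Step 1, where one must verify that $U \cap (\mathbb{R}^{d-1}\times\{0\})$ contains a neighborhood of $0$ in $\mathbb{R}^{d-1}\times\{0\}$ so that the identity theorem applies; this is immediate since $U$ is an open neighborhood of $0$ in $\mathbb{C}^d$.
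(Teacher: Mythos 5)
Your proof is correct. The paper's argument applies the Weierstrass division theorem to write $f = z_d^k q + r$ with $r$ a polynomial in $z_d$ of degree $<k$ having holomorphic coefficients in $z'$, then shows the coefficients of $r$ vanish on $\mathbb{R}^{d-1}\times\{0\}$ and hence identically. You instead expand $f$ directly in a Taylor series in $z_d$ with holomorphic coefficient functions $a_j(z')$, kill the coefficients for $j<k$, and factor out $z_d^k$; this avoids invoking Weierstrass division as a black box, replacing it by a Cauchy-estimate convergence check that, in the special case of dividing by the Weierstrass polynomial $z_d^k$, is elementary. Both proofs hinge on the same crucial step, namely the upgrade from vanishing on the totally real slice $\mathbb{R}^{d-1}$ to vanishing on a full complex neighborhood in $\mathbb{C}^{d-1}$, which the paper uses implicitly to conclude $r\equiv 0$ and you state explicitly; and both deduce the ``moreover'' part from $q(0)=\tfrac{1}{k!}\tfrac{\partial^k f}{\partial z_d^k}(0)\neq 0$. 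One point worth spelling out, since a subset of $\mathbb{R}^{d-1}$ is not open in $\mathbb{C}^{d-1}$: the ``identity theorem'' you invoke is the version for a holomorphic function vanishing on a real open set in a totally real slice (all real partials vanish there, hence by the Cauchy--Riemann equations all complex partials vanish, hence the Taylor series is zero, hence the function vanishes on a complex neighborhood); the paper's argument needs the same standard fact.
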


\begin{proof}
By the Weierstrass division theorem \cite[Chap. II Sec. B, Theorem 3]{GuRo65}, in a small neighborhood $V_1 \subset U$ of 0, 
\begin{equation}\label{First application of Weierstrass Division Theorem}
f(z) = z_d^kq(z^{\prime}, z_d) + r(z^{\prime}, z_d),
\end{equation}
where $r$ is a polynomial of $z_d$ of degree less than $k$ with holomorphic coefficients in the $z^{\prime}$ variables.  Since $f$ vanishes to order at least $k$ at $(x^{\prime}, 0)$ for any $x^{\prime} \in \mathbb{R}^{d-1}$ near $0^{\prime}$,
$$
0 = {\partial^{\alpha} f \over \partial z^{\alpha}}(x^{\prime}, 0) = {\partial^{\alpha} r \over \partial z^{\alpha}}(x^{\prime}, 0), \quad |\alpha| < k,
$$
which implies that $r \equiv 0$.  Equation \eqref{First application of Weierstrass Division Theorem} now implies that
\begin{equation}\label{derivatives of equation of order equal to k}
0 = {\partial^{\alpha} f \over \partial z^\alpha}(x^{\prime}, 0), \quad |\alpha| = k, \quad  \alpha_d< k.
\end{equation}
If $f$ vanishes to order $k$ at $0\in \mathbb{C}^d$, \eqref{derivatives of equation of order equal to k} implies that 
$$
k!q(0, 0) = {\partial^k f \over \partial z_d^k}(0, 0) \neq 0.
$$ By continuity, $q$ is nonzero in a neighborhood $V_2 \subset V_1$ of 0.  
\end{proof}
Recall from Section \ref{Section: Kuchvainb} that every analytic subset $A$ has a unique decomposition into irreducible components $A = \cup_{j \in I}Z_j$ where $Z_j$ is irreducible. The union is locally finite, which means that for all compact sets $K \subset \mathbb{C}^d$,
$$
\left| \{j \in I: K \cap Z_j \neq \emptyset\} \right| < \infty.
$$
For any two distinct irreducible components $Z_i$ and $Z_j$, $Z_i \cap Z_j \subset \sng(A)$, cf. \cite[p.5.1.~top and p.5.4.~Theorem]{Ch89}.
\begin{lem}\label{lemma that involves local finiteness and irreducibility}
Let $A$ be an analytic subset of $\Omega$ and $A = \cup_{j \in I}Z_j$ be its decomposition into irreducible components. For any $j \in I$ and $p \in \reg(A) \cap Z_j$, for any sufficiently small neighborhood $U$ of $p$, 
$$
U \cap A = U \cap \reg(A) = U \cap Z_j,
$$
and $U \cap A$ is a complex manifold.
\end{lem}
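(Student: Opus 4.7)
[Proof proposal for Lemma \ref{lemma that involves local finiteness and irreducibility}]
The plan is to shrink a neighborhood of $p$ in three successive steps, using: (i) local finiteness of the irreducible decomposition, (ii) the fact that $Z_i \cap Z_j \subset \sng(A)$ for $i \neq j$, and (iii) that $\sng(A)$ is closed in $A$.

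First, since the decomposition $A = \cup_{j\in I} Z_j$ is locally finite (cf. \cite[p.5.1]{Ch89}), there exists a compact neighborhood $K$ of $p$ meeting only finitely many irreducible components; label these $Z_j, Z_{i_1}, \ldots, Z_{i_N}$. By hypothesis $p \in \reg(A)$, and because $Z_i \cap Z_k \subset \sng(A)$ whenever $i \neq k$ (cf. \cite[p.5.4.~Theorem]{Ch89}), we deduce that $p \notin Z_{i_\ell}$ for any $\ell = 1,\ldots,N$ (otherwise $p \in Z_j \cap Z_{i_\ell} \subset \sng(A)$, contradicting $p \in \reg(A)$). Each $Z_{i_\ell}$ is closed in $\Omega$, so there is a neighborhood $U_\ell$ of $p$ disjoint from $Z_{i_\ell}$. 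Setting
\[
U' = \mathrm{int}(K) \cap U_1 \cap \cdots \cap U_N,
\]
we obtain a neighborhood of $p$ with the property that $U' \cap A = U' \cap Z_j$.

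Second, since $\sng(A)$ is a closed subset of $A$ and $p \in \reg(A) = A \setminus \sng(A)$, there exists a neighborhood $U'' \subset U'$ of $p$ disjoint from $\sng(A)$, whence
\[
U'' \cap A \subset \reg(A), \qquad \text{so} \qquad U'' \cap A = U'' \cap \reg(A).
\]
Combined with the first step, this gives $U'' \cap A = U'' \cap \reg(A) = U'' \cap Z_j$.

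Finally, by the very definition of $\reg(A)$, there is a neighborhood $U \subset U''$ of $p$ such that $U \cap A$ is a complex manifold. This $U$ satisfies all of the conclusions. The argument is essentially bookkeeping with standard facts from Chirka's treatise; the only point requiring care is the deduction that $p \notin Z_i$ for $i \neq j$, where one must be careful not to confuse $p \in \reg(A)$ (a smoothness condition on $A$) with the weaker statement that $p \in \reg(Z_j)$.
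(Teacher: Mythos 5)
Your proof is correct and follows essentially the same approach as the paper's: local finiteness of the irreducible decomposition, the inclusion $Z_i \cap Z_k \subset \sng(A)$ for $i \neq k$, closedness of irreducible components, and the definition of $\reg(A)$. The paper phrases the key step as a limit-point argument (if every neighborhood of $p$ met $Z_{i_k}$ then $p \in \overline{Z_{i_k}} = Z_{i_k}$), whereas you argue directly that $p \in Z_{i_k}$ would force $p \in \sng(A)$; these are equivalent because each $Z_{i_k}$ is closed, and the only other minor difference is that your second shrinking step is subsumed by the third.
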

\begin{proof}
    For all neighborhoods $U$ of $p$ sufficiently small, that 
    $$
    U \cap A = U \cap \reg(A),
    $$
    follows from the definition of $\reg(A)$.  Fix such a neighborhood $U_1$.  By the local finiteness of the decomposition of $A$ into irreducible components, there are finitely many indices $i_1, \ldots, i_N \in I$ such that
    $$
    Z_{i_k} \cap \overline{U_1} \cap A \neq \emptyset.
    $$
    Let $Z_{i_k} \neq Z_j$.  If for all neighborhoods $U_2$ of $p$,
    $$
    Z_{i_k} \cap U_2 \cap A \neq \emptyset,
    $$
    then $p \in \overline{Z_{i_k}} = Z_{i_k}$.  Since $p \in \reg(A)$ and $Z_{i_k} \cap Z_j \subset \sng(A)$, this is impossible.  Thus, for a neighborhood $U_{i_k}$ sufficiently small,
    $$
    Z_{i_k} \cap U_{i_k} \cap A = \emptyset.
    $$
    Let $U = U_1 \cap \bigcap\limits_{i_k \neq j} U_{i_k}$.  Then 
    $$
    U \cap A = U \cap Z_j.
    $$
\end{proof}

\begin{lem}\label{C1 manifold leads to real-analytic manifold lemma}
Suppose $d > 1$. Let $A$ be a $(d - 1)$-dimensional analytic subset of $\Omega$.  
 Suppose that $A \cap \mathbb{R}^d$ has Hausdorff dimension $d-1$. Then $A \cap \mathbb{R}^d$ contains a real-analytic $(d-1)$-dimensional manifold.
\end{lem}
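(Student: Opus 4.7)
The plan is to reduce the statement to Whitney's stratification theorem for real-analytic sets \cite[Theorem 1.2.10]{Trotman}, which is already cited in the paper. First I would observe that, near any point $p\in A$, the analytic subset $A$ is the common zero set of finitely many holomorphic functions $h_1,\ldots,h_N\in\mathcal{O}(U)$ on some open $U\subset\Omega$. Writing $h_j=u_j+iv_j$ with $u_j,v_j$ real-analytic on $U\cap\mathbb{R}^d$, one sees that $A\cap\mathbb{R}^d$ is, locally, the common zero set of the real-analytic functions $\{u_j,v_j\}$; hence $A\cap\mathbb{R}^d$ is a real-analytic subset of $\Omega\cap\mathbb{R}^d$.

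Next I would apply Whitney's stratification theorem to obtain a locally finite decomposition
\[
A\cap\mathbb{R}^d=\bigsqcup_{\alpha\in\mathcal{A}}S_{\alpha},
\]
where each $S_\alpha$ is a real-analytic submanifold of $\mathbb{R}^d$ of some (real) dimension $d_\alpha\in\{0,1,\ldots,d\}$. For such a locally finite stratification by manifolds, the Hausdorff dimension of $A\cap\mathbb{R}^d$ equals $\sup_{\alpha}d_\alpha$, so the hypothesis that this dimension equals $d-1$ forces the existence of some $\alpha_0$ with $d_{\alpha_0}=d-1$. The stratum $S_{\alpha_0}$ is then the required real-analytic $(d-1)$-dimensional manifold contained in $A\cap\mathbb{R}^d$. (One can additionally check that a stratum of dimension $d$ would contain an open set of $\mathbb{R}^d$ on which each defining $h_j$ vanishes, forcing $h_j\equiv 0$ on a complex neighborhood by the identity theorem and contradicting $\dim_{\mathbb{C}}A=d-1$; but this is not needed since the Hausdorff-dimension hypothesis already caps the dimensions of the strata.)

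The main obstacle I expect is ensuring that the form of the stratification theorem cited in \cite[Theorem 1.2.10]{Trotman} applies to the class of real-analytic sets described in the first paragraph and that the strata it produces are genuinely real-analytic (not merely smooth or subanalytic) submanifolds. A secondary technical point is the identification of the Hausdorff dimension of $A\cap\mathbb{R}^d$ with the supremum of the stratum dimensions, which uses the local finiteness of the stratification together with the standard fact that the Hausdorff dimension of an embedded real-analytic manifold coincides with its manifold dimension. Once these two ingredients are in place the dimension-counting step is essentially immediate.
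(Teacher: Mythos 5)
Your proof is correct and takes essentially the same route as the paper: both reduce to the Whitney stratification of the real-analytic set $A\cap\mathbb{R}^d$ from \cite[Theorem 1.2.10]{Trotman} and then read off a $(d-1)$-dimensional stratum from the Hausdorff-dimension hypothesis. You fill in the details (that $A\cap\mathbb{R}^d$ is a real-analytic subset, and the countable stability of Hausdorff dimension) that the paper's one-line proof leaves implicit.
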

\begin{proof}
    Every real-analytic subset of is a locally finite (hence, countable) union of pairwise disjoint smooth real-analytic submanifolds. Therefore, if $A\cap\R$ has Hausdorff dimension $d-1$,  $A \cap \mathbb{R}^d$ contains a real-analytic $(d-1)$-dimensional manifold. 
    See \cite[Theorem 1.2.10]{Trotman}.
\end{proof}


\subsection{Proof of Lemma \ref{Lem:vanishing}}  
\begin{proof}[Proof of Lemma \ref{Lem:vanishing}]

Fix $j$.  Recall from \eqref{definition of Aj prime}
\begin{equation*}
    A_j^{\prime} = \left\{z \in \Omega: {\partial^{\alpha}h \over \partial z^{\alpha}}(z) = 0, \quad 0 \leq |\alpha| \leq m_j\right\} \cap Z_j,
\end{equation*}
where $m_j$ is the minimal order of the zeros of $h$ on $Z_j$.  For clarity, we will suppress the subscript $j$ in our notation and let 
$
Z = Z_j, A^{\prime} = A^{\prime}_j, 
$ and $m = m_j$. We consider case \ref{condition1}. 

\begin{claim*}
    There is a real-analytic $(d - 1)$-dimensional manifold $M \subset Z \cap \mathbb{R}^d$ which contains only regular points of $A$ and $h$ vanishes to order $m$ on $M$.
\end{claim*}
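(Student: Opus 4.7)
My plan is to construct $M$ by first extracting a real-analytic $(d-1)$-dimensional manifold sitting inside $Z\cap \mathbb{R}^d$, and then trimming away a nowhere-dense set of ``bad'' points where either $A$ is singular or $h$ vanishes to order higher than $m$.

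The first step is to invoke Lemma \ref{C1 manifold leads to real-analytic manifold lemma}. Since $Z$ is an irreducible analytic subset of $\Omega$ of complex dimension $d-1$, and, by hypothesis \ref{condition1} of Theorem \ref{Kuch-Vainb theorem}, $Z\cap\mathbb{R}^d$ has Hausdorff dimension $d-1$, this lemma produces a real-analytic $(d-1)$-dimensional manifold $M_0\subset Z\cap \mathbb{R}^d$. I emphasize that I apply the lemma to $Z$, not to $A$ as a whole, which is legitimate because $Z$ itself is a $(d-1)$-dimensional analytic subset of $\Omega$.

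Next I would prune $M_0$ by excising the singular locus of $A$ together with $A'$. Recall that $\mathrm{sng}(A)$ is an analytic subset of $\Omega$ of complex dimension at most $d-2$, and by \eqref{dimension of Ajprime} the analytic subset $A'$ has complex dimension strictly less than $d-1$. Both sets are closed in $\Omega$. Therefore by Lemma \ref{lemma with conclusion no open subset}, neither $M_0\cap \mathrm{sng}(A)$ nor $M_0\cap A'$ contains any open subset of $M_0$, and hence $M_0\cap (\mathrm{sng}(A)\cup A')$ is closed and nowhere dense in $M_0$. Setting
$$
M := M_0 \setminus \bigl(\mathrm{sng}(A)\cup A'\bigr)
$$
thus yields an open dense subset of $M_0$; being open inside a real-analytic $(d-1)$-dimensional manifold, $M$ is itself such a manifold.

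Finally I would verify the three asserted properties. By construction $M\subset M_0\subset Z\cap\mathbb{R}^d$, and $M\cap \mathrm{sng}(A)=\emptyset$ forces $M\subset \mathrm{reg}(A)$. On $Z$ the function $h$ vanishes to order at least $m$ everywhere, while $A'$ is precisely the locus inside $Z$ where the vanishing order exceeds $m$; since $M$ is disjoint from $A'$, it follows that $h$ vanishes to order exactly $m$ throughout $M$. The main obstacle in this plan is bookkeeping the dimension counts so that Lemma \ref{lemma with conclusion no open subset} applies both to $A'$ and to $\mathrm{sng}(A)$; once \eqref{dimension of Ajprime} and the standard fact that the singular locus of a pure-dimensional analytic subset has strictly smaller dimension are in hand, the remainder of the argument is essentially a density argument within the real-analytic manifold $M_0$.
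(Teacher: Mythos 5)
Your proof is correct and follows essentially the same route as the paper: apply Lemma~\ref{C1 manifold leads to real-analytic manifold lemma} to $Z$ to produce a real-analytic $(d-1)$-manifold inside $Z\cap\mathbb{R}^d$, then use Lemma~\ref{lemma with conclusion no open subset} together with \eqref{dimension of Ajprime} and the dimension drop of the singular locus to remove the closed, nowhere-dense set $\sng(A)\cup A'$, leaving an open dense (hence nonempty) submanifold $M$ with the stated properties. The paper writes the same set as $\widehat{M}\cap(Z\setminus A')\cap\reg(A)$, which coincides with your $M_0\setminus(\sng(A)\cup A')$ since $\widehat{M}\subset Z\subset A$, so the two constructions agree.
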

\begin{proof}
   By Lemma \ref{C1 manifold leads to real-analytic manifold lemma}, we may assume without loss of generality that $Z\cap \mathbb{R}^d$ contains a real-analytic manifold of dimension $d-1$, which we will call $\widehat{M}$. By Lemma \ref{lemma with conclusion no open subset} and \eqref{dimension of Ajprime}, $\widehat{M} \not\subset A^{\prime}$. Let
    \begin{equation}\label{long equation defining O2}
    M := \widehat{M} \cap \left(Z\setminus A^{\prime}\right) \cap \reg(A).
    \end{equation}
    $\widehat{M} \cap (Z\setminus A^{\prime})$ is open in $\widehat{M}$ because $Z \setminus A^{\prime}$ is open in $Z$, $\widehat{M}$ and $Z$ both have the subspace topology of $\mathbb{C}^d$ and $\widehat{M} \subset Z$.  By Lemma \ref{lemma with conclusion no open subset}, $\widehat{M} \cap (Z \setminus A^{\prime})$ cannot be contained in $\sng(A)$.  So $M$ is nonempty. By a similar argument, $M$ is open in $\widehat{M}$. Additionally from \eqref{long equation defining O2}, $h$ vanishes to order $m$ on $M$ and $M \subset \reg(A)$. 
    \renewcommand{\qedsymbol}{$\blacksquare$}
\end{proof}

Without loss of generality, assume $0 \in M$ and let $F: U \to V = (-\epsilon, \epsilon)^d \oplus i(-\epsilon, \epsilon)^d$ be as in Lemma \ref{special real-analytic and biholomorphic isomorphism} with $p = 0$.  As stated in that lemma, $\epsilon > 0$ is a sufficiently small quantity. By Lemma \ref{lemma that involves local finiteness and irreducibility}, after shrinking $\epsilon$, $U \cap  A = U \cap Z$. In the sequel, we will henceforth use the convention that $\epsilon > 0$ may become a smaller number at each appearance. Since $h \circ F^{-1} \equiv 0$ on $(-\epsilon, \epsilon)^{d-1} \times \{0\}$, 
$$
h \circ F^{-1}(\xi) \equiv 0, \quad \xi \in \left((-\epsilon, \epsilon)^{d-1} \oplus i(-\epsilon, \epsilon)^{d-1}\right) \times \{0\} = \left(\mathbb{C}^{d-1} \times \{0\}\right) \cap V
$$
Since $h$ vanishes to order $m$ on $M$, $h \circ F^{-1}$ vanishes to order $m$ on $(-\epsilon, \epsilon)^{d-1} \times \{0\}$.  
By Lemma \ref{lemma involving weierstrass division theorem}, 
  \begin{equation*}
  h \circ F^{-1}(\xi) = \xi_d^{m}q_1(\xi), \quad \xi \in V,
  \end{equation*}
  where $q_1 \neq 0$. This shows that
  \begin{equation}\label{where h vanishes}
  \{\xi \in V:\, h \circ F^{-1}(\xi) = 0\} = \left(\mathbb{C}^{d-1} \times \{0\}\right) \cap V.
  \end{equation}

Below $c$ will be a positive numerical constant which may be smaller at each appearance. Since $f \in L^2_{loc}(\mathbb{R}^d)$,
    \begin{eqnarray}
    \infty &>& \int_{U \cap \mathbb{R}^d} |f(x)|^2 dx_1\cdots dx_d \nonumber \\
    &=& \int_{F(U \cap \mathbb{R}^d)} |f \circ F^{-1}(\xi_1,\ldots, \xi_d)|^2|\det J_{\mathbb{R}}F^{-1}|d\xi_1\cdots d\xi_d \nonumber
    \\
    &\geq & c\int_{(-\epsilon, \epsilon)^d}|f \circ F^{-1}(\xi_1,\ldots, \xi_d)|^2 d\xi_1\cdots d\xi_d \nonumber
    \\
    &\geq& c\int_{(-\epsilon, \epsilon)^{d-1}}\int_{-\epsilon}^{\epsilon} {|g \circ F^{-1}(\xi_1,\ldots, \xi_d)|^2 \over |\xi_d|^{2m}}d\xi_d\, d(\xi_1\cdots \xi_{d-1}),\label{now we will show the numerator vanishes}
    \end{eqnarray}
    where the first equality used that the real Jacobian matrix of a holomorphic mapping satisfies  $\det J_\mathbb{R}F = |\det J_{\mathbb{C}}F|^2$ and $c$ is guaranteed to exist by Lemma \ref{special real-analytic and biholomorphic isomorphism}.  If $g \circ F^{-1}(\xi^{\prime}, 0) \not\equiv 0$, then \eqref{now we will show the numerator vanishes} is impossible.  Thus,
    $$
    g\circ F^{-1}(\xi^{\prime}, 0) \equiv 0, \quad \xi^{\prime} \in (-\epsilon, \epsilon)^{d-1}.
    $$
     It follows that $g \circ F^{-1} \equiv 0$, in $(\mathbb{C}^{d-1}\times\{0\}) \cap V.$
\begin{claim*}
    After shrinking $V$, $g \circ F^{-1}$ vanishes to order at least $k \geq m$ on $\left(\mathbb{C}^{d-1} \times \{0\}\right) \cap V$.
\end{claim*}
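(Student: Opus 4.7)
The plan is to factor the largest possible power of $\xi_d$ out of $g\circ F^{-1}$ and then use the $L^{2}_{\mathrm{loc}}$ integrability already exploited in \eqref{now we will show the numerator vanishes} to force that power to be at least $m$. Since $g\circ F^{-1}$ is holomorphic on $V$ (which I take to be a connected polydisk after shrinking) and vanishes on the slice $(\mathbb{C}^{d-1}\times\{0\})\cap V$, iterating the Weierstrass division theorem with divisor $\xi_d$ yields either an integer $k\geq 1$ and $\tilde{q}\in\mathcal{O}(V)$ with
\begin{equation*}
g\circ F^{-1}(\xi)=\xi_d^{k}\,\tilde{q}(\xi),\qquad \tilde{q}(\cdot,0)\not\equiv 0,
\end{equation*}
or a factorization by arbitrarily high powers of $\xi_d$; in the latter case $g\circ F^{-1}$ vanishes to infinite order at $0$, hence $g\circ F^{-1}\equiv 0$ on $V$ by the identity principle and the claim is trivial. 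To globalize the factorization to all of $V$, I would run the iteration on a small polydisk around $0$, extract $k$ and $\tilde{q}$ there, and extend $\tilde{q}=(g\circ F^{-1})/\xi_d^{k}$ from $V\setminus(\mathbb{C}^{d-1}\times\{0\})$ by analytic continuation.

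The core step is to rule out $k\leq m-1$. Assume by contradiction that $k\leq m-1$. The set $S=\{\xi'\in(-\epsilon,\epsilon)^{d-1}\colon \tilde{q}(\xi',0)=0\}$ is the zero set of a nontrivial real-analytic function and thus has $(d-1)$-dimensional Lebesgue measure zero. Pick any $\xi'_{0}\in(-\epsilon,\epsilon)^{d-1}\setminus S$; by continuity of $\tilde{q}$ there is a box $U_{0}\times(-\delta,\delta)\subset V\cap\mathbb{R}^{d}$ on which $|\tilde{q}|\geq c_{0}>0$. Substituting the factorization into \eqref{now we will show the numerator vanishes} and restricting the integration region to $U_{0}\times(-\delta,\delta)$ gives
\begin{equation*}
\infty>c\int_{U_{0}}\int_{-\delta}^{\delta}\frac{|\xi_d^{k}\,\tilde{q}(\xi)|^{2}}{|\xi_d|^{2m}}\,d\xi_d\,d\xi'\;\geq\; c\,c_{0}^{2}\,|U_{0}|\int_{-\delta}^{\delta}|\xi_d|^{2(k-m)}d\xi_d,
\end{equation*}
whose right-hand side equals $+\infty$ because $2(k-m)\leq -2$. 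This contradiction yields $k\geq m$, and since the factorization $g\circ F^{-1}=\xi_d^{k}\tilde{q}$ holds throughout $V$, $g\circ F^{-1}$ vanishes to order at least $k\geq m$ at every point of $(\mathbb{C}^{d-1}\times\{0\})\cap V$, as required.

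The main subtlety I anticipate is justifying the global validity of the factorization $g\circ F^{-1}=\xi_d^{k}\tilde{q}$ on the whole polydisk $V$, since the Weierstrass division theorem gives a factorization only near a fixed base point. One produces $k$ and $\tilde{q}$ near $0$ and then extends $\tilde{q}$ to $V$ via analytic continuation, using that $V\setminus\{\xi_d=0\}$ is open, dense, and connected when $V$ is a connected polydisk. Once the factorization is global, the contradiction step is a routine Fubini/Tonelli comparison between the integrability exponent $-2m$ and the vanishing order $2k$ of $|g\circ F^{-1}|^{2}$ along $\{\xi_d=0\}$.
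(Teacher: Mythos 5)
Your argument is correct and follows essentially the same route as the paper: factor out the maximal power of $\xi_d$ from $g\circ F^{-1}$ (Weierstrass division) and use the $L^2_{\mathrm{loc}}$ bound \eqref{now we will show the numerator vanishes} to force the exponent to be at least $m$. The only organizational difference is that you globalize the factorization on the polydisk $V$ first and then select a generic point $\xi_0'$ off the zero set of $\tilde q(\cdot,0)$, whereas the paper first selects a real point $(x_0',0)$ of minimal vanishing order, factors locally there, and afterwards propagates the conclusion to the complex slice via Lemma \ref{lemma involving weierstrass division theorem}; both are valid and lead to the same estimate.
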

\begin{proof}
    Suppose the minimum of the vanishing orders of $g \circ F^{-1}$ on $(-\epsilon, \epsilon)^{d-1}\times\{0\}$ is $k$ and is achieved at $(x_0^{\prime}, 0)$. Since the order is minimal, $g \circ F^{-1}$ vanishes to order $k$ in a small neighborhood of $(x_0^{\prime}, 0)$ in $(-\epsilon, \epsilon)^{d-1} \times \{0\}$.    By Lemma  \ref{lemma involving weierstrass division theorem}, for some small $\epsilon^{\prime} > 0$ and $S =  \prod_{j=1}^{d-1}((x_0^{\prime})_j - \epsilon^{\prime}, (x_0^{\prime})_j + \epsilon^{\prime})$,
    \begin{equation}\label{weierstrass division for g of F inverse}
    g \circ F^{-1}(\xi) = \xi_d^kq_2(\xi - (x_0^{\prime}, 0)), \quad \xi \in S \times (-\epsilon^{\prime}, \epsilon^{\prime}) \subset (-\epsilon, \epsilon)^d.
    \end{equation}
    where $q_2 \neq 0$.  Then \eqref{now we will show the numerator vanishes} implies that
    \begin{eqnarray*}
    \infty &>& c\int_{(-\epsilon, \epsilon)^{d-1}}\int_{-\epsilon}^{\epsilon} {|g \circ F^{-1}(\xi_1,\ldots, \xi_d)|^2 \over |\xi_d|^{2m}}d\xi_1\cdots d\xi_d 
    \\
    &>& c\int_{S}\int_{-\epsilon^{\prime}}^{\epsilon^{\prime}} {|\xi_d|^{2k} \over |\xi_d|^{2m}} d\xi_d\, d(\xi_1\cdots \xi_{d-1}).
    \end{eqnarray*}
    Thus, $k \geq m$. By the minimality of $k$, $g \circ F^{-1}$ vanishes to order at least $k$ which is at least $m$ on $(-\epsilon, \epsilon)^{d-1} \times \{0\}$.  Using Lemma \ref{lemma involving weierstrass division theorem}, $g \circ F^{-1}$ vanishes to order at least $k$ in a neighborhood of the origin in $\left(\mathbb{C}^{d - 1} \times \{0\}\right) \cap V$.  After shrinking $V$, the claim follows.
    \renewcommand{\qedsymbol}{$\blacksquare$}
    \end{proof}
    
    By \eqref{where h vanishes}, the zero set of $g \circ F^{-1}$ contains a small neighborhood of the origin intersected with the zero set of $h \circ F^{-1}$.  Thus, $g$ vanishes to order at least $m$ on a small open subset of the zero set of $h$.  Since $A$ (the zero set of $h$) equals Z near the origin, the proof of \ref{condition1} is complete.
    \medskip

    For case \ref{condition2}, since $Z \setminus A^{\prime}$ and $\reg(A)$ are open and dense in $Z$ and $Z \cap \Omega^{\prime}$ is a nonempty open subset of $Z$, $Z\setminus A^\prime$  contains a point $p \in \reg(A) \cap \Omega^{\prime}$ such that $h$ vanishes to order $m$ at $p$ and the dimension of $A$ at $p$ is $d - 1$.  By Lemma \ref{lemma that involves local finiteness and irreducibility}, there is a neighborhood $U \subset \Omega^\prime$ of $p$ such that
    $$
    U \cap A = U \cap \reg(A) = U \cap Z
    $$
    is a complex manifold.  After a holomorphic change of coordinates, without loss of generality,
    $$
    Z \cap U = \{(z_1, \ldots, z_d) \in U: z_d = 0\}.
    $$
    Since $m$ is minimal we can suppose that $h$ vanishes to order $m$ at all points of $Z \cap U$.  By Lemma \ref{lemma involving weierstrass division theorem}, $h(z) = z_d^{m}q_1(z)$ where $q_1$ is a nonzero holomorphic function.  Since $f \in \mathcal{O}(U)$, $g$ must vanish on the zero set of $h$ in $U$.  By Lemma \ref{lemma involving weierstrass division theorem} again, $g(z) = z_d^kq_2(z)$ for some $k$ and holomorphic function $q_2$ not identically zero on $Z \cap U$.  Since $f \in \mathcal{O}(U)$, $k \geq m$; that is, $g$ vanishes to order at least $k$, which is at least $m$ at all points of $U \cap Z$.


\end{proof}

\section*{Acknowledgments}

W.\ L.\  is  partially supported by  the Simons Foundation, NSF  DMS-2000345, NSF DMS-2052572 and NSF DMS-2246031. R.M. is partially supported by CNPq Universal grant 402952/2023-5. J.N.T. is partially supported by NSF DMS-2247175 Subaward M2401689.
R.M. and J.N.T.  would also like to thank Department of Mathematics, Texas A\&M University, where part of the work was done when they were visiting assistant professors there.

\section*{Statements and Declarations}
{\bf Conflict of Interest} 
The authors declare no conflicts of interest.

\vspace{0.2in}
{\bf Data Availability}
Data sharing is not applicable to this article as no new data were created or analyzed in this study.

\bibliographystyle{amsplain}

\bibliography{LMT}

\end{document}